\newtheorem{remark}[theorem]{ Remark}
\newtheorem{example}[theorem]{ Example}
\def\C{{\mathbb C}}
\def\lam{\lambda}
\def\sig{\sigma}
\def\Lam{\Lambda}
\def\rar{\rightarrow}
\newcommand{\ba}{\begin{array}}
\newcommand{\ea}{\end{array}}
\newcommand{\be}{\begin{equation}}
\newcommand{\ee}{\end{equation}}
\newcommand{\beano}{\begin{eqnarray*}}
\newcommand{\eeano}{\end{eqnarray*}}
\def \noin{\noindent}
\def \eig{\mathrm{eig}}
\def \nrk{\mathrm{nrank}}
\def \RCISS{\text{RCISS}}
\def \CISS{\text{CISS}}
\title{Structured strong linearizations of structured rational matrices }
\author{Ranjan Kumar Das \thanks{Department of Mathematics, IIT Guwahati,
Guwahati-781039, India ({\tt d.ranjan@iitg.ac.in}). } \and Rafikul Alam \thanks{Corresponding author, Department of Mathematics, IIT Guwahati, Guwahati - 781039, India, ({\tt rafik@iitg.ac.in, rafikul68@gmail.com}) Fax: +91-361-2690762/2582649.}   }
\begin{document}


\maketitle

\maketitle
\begin{abstract} Structured rational matrices such as symmetric,  skew-symmetric,  Hamiltonian, skew-Hamiltonian, Hermitian, and  para-Hermitian  rational matrices arise in many applications. Linearizations of rational matrices have been introduced recently for computing  poles,  eigenvalues, eigenvectors, minimal bases and minimal indices of rational matrices. For structured rational matrices, it is desirable to construct structure-preserving linearizations so as to preserve the symmetry in the eigenvalues and poles of the rational matrices. With a view to constructing structure-preserving linearizations of structured rational matrices, we propose a family of Fiedler-like pencils and show that the family of Fiedler-like pencils is a rich source of structure-preserving strong linearizations of structured rational matrices. We construct symmetric, skew-symmetric,  Hamiltonian, skew-Hamiltonian, Hermitian, skew-Hermitian,  para-Hermitian and para-skew-Hermitian strong linearizations of a rational matrix $G(\lam)$ when $G(\lam)$ has the same structure. Further, when $G(\lam)$ is real and symmetric, we show that the transfer functions of real symmetric linearizations of $G(\lam)$ preserve the Cauchy-Maslov index of $G(\lam).$  We  describe the recovery of eigenvectors, minimal bases and minimal indices of $G(\lam)$ from those of the linearizations of $G(\lam)$ and show that the recovery is operation-free.
\end{abstract}

\begin{keywords}  Structured rational matrix,  system matrix,  matrix polynomial, eigenvalues, eigenvector, minimal basis, minimal indices, strong linearization, Fiedler pencil.
\end{keywords}

\begin{AMS}
65F15, 15A57, 15A18, 65F35
\end{AMS}

\section{Introduction}
Structured rational matrices such as symmetric,   Hamiltonian,  skew-symmetric, skew-Hamiltonian, Hermitian, skew-Hermitian,  para-Hermitian and para-skew-Hermitian rational matrices arise in many applications, see~\cite{hilscot, kbrr, hrw, genin, fuhr,  volkervoss, bai11, willem} and the references therein. For example, the  Hermitian rational eigenvalue problem
$$  G(\lam) u :=\Big (\lambda^2 M + K - \sum^k_{i=1}\frac{1}{1 + \lambda  b_i }\Delta K_i \Big)u =0
$$ arises in the study of  damped vibration of a structure, where $M$ and $K$ are positive definite, $b_i$ is a relaxation parameter and $\Delta K_i $ is  an assemblage of element stiffness
matrices~\cite{ volkervoss, bai11}. Also various structured rational matrices arise as transfer functions of linear time-invariant (LTI) systems, see~\cite{hilscot, kbrr, hrw, fuhr, ran,  willem}.

Our main aim in this paper is to construct structure-preserving strong linearizations of structured rational matrices and to recover eigenvectors, minimal bases and  minimal indices of rational matrices from those of the linearizations. Let $G(\lam)$ be an $n\times n$ rational matrix, that is, the entries of $G(\lam)$ are scalar rational functions of the form $p(\lam)/{q(\lam)}$, where $p(\lam)$ and $q(\lam)$ are scalar polynomials.   We consider the following structures:
{\small
\be \label{struc}  \begin{array}{l@{}l|l@{}l}
	\mathrm{symmetric:} & G(\lam)^T = G(\lam) &  \mathrm{ Hermitian: } & G(\lam)^* = G(\bar \lam)   \\
	 \mathrm{ skew\mbox{-}symmetric:}  & G(\lam)^T = -G(\lam) & \mathrm{ skew\mbox{-}Hermitian: }  & G(\lam)^* = -G(\bar \lam) \\
	  \mathrm{ Hamiltonian: }  & G(\lam)^T = G(-\lam) &  \mathrm{ para\mbox{-}Hermitian:}  &  G(\lam)^* = G(- \bar\lam)   \\
	    \mathrm{skew\mbox{-}Hamiltonian:} \;  & G(\lam)^T = - G(-\lam)  &   \mathrm{para\mbox{-}skew\mbox{-}Hermitian:} \;  & G(\lam)^* = -G(- \bar \lam),\end{array}\ee}where $X^T$  (resp., $X^*$) denotes the transpose (resp., conjugate transpose) of a matrix $X$ and $\bar \lam$ denotes  the conjugate of $\lam.$ For more on these structured rational matrices, we refer to~\cite{hilscot, kbrr, hrw, genin, volkervoss, ran,  willem, fuhr, bai11} and the references therein.

%

We mention that there is a slight difference in the naming convention between  some of the structured rational matrices and structured matrix polynomials. The Hamiltonian (resp., skew-Hamiltonian) structure for rational matrices is known as $T$-even (resp., $T$-odd) structure for matrix polynomials \cite{mmmm_goodvib}. On the other hand, para-Hermitian (resp., para-skew-Hermitian) structure for rational matrices is known as $*$-even (rep., $*$-odd) structure for matrix polynomials \cite{mmmm_goodvib}. We follow both the naming conventions in the rest of the paper without any bias.

Linearization of rational matrices is a relatively new concept and has been studied in~\cite{rafinami1, rafinami3, admz, behera, rafiran1, bai11}. However, barring symmetric linearizations~\cite{rafiran1,fmq}, structure-preserving linearizations of structured rational matrices have not been constructed in the literature. The frameworks of Fielder pencils, generalized Fiedler pencils, and affine spaces of pencils for rational matrices presented  in \cite{rafinami1, rafinami3, rafiran1} are not adequate for construction of structure-preserving linearizations of structured rational matrices. 
 
The main aim of this paper is to present a framework  for construction of  structure-preserving strong linearizations of structured rational matrices considered in~(\ref{struc}).
For this purpose, we propose a new family of Fiedler-like pencils of  $G(\lam)$ which we refer to as generalized Fiedler pencils with repetition (GFPRs) of $G(\lam).$ We show that the GFPRs of $G(\lam)$ are Rosenbrock strong linearizations of $G(\lam)$ and describe the recovery of eigenvectors, minimal bases and minimal indices of $G(\lam)$ from those of the GFPRs of $G(\lam).$ In fact, we show that the  eigenvectors and minimal bases can be recovered without performing any arithmetic operations. Next, we show that the family of GFPRs of $G(\lam)$ is a rich source of structure-preserving linearizations of $G(\lam)$ and utilize these pencils to construct structure-preserving Rosenbrock strong linearizations of $G(\lam).$ In particular, when $G(\lam)$ is real symmetric, we construct  real symmetric linearizations of $G(\lam)$ whose transfer functions  preserve the Cauchy-Maslov index of $G(\lam).$
We also show that Fiedler pencils (FPs) and generalized Fiedler pencils (GFPs) of $G(\lam)$ constructed in~\cite{rafinami1, rafinami3} are in fact Rosenbrock strong linearizations of $G(\lam).$

The rest of the paper is organized as follows. We collect some basic results in Section~2. We introduce GFPRs of $G(\lam)$ in Section~3 and show that the FPs, GFPs and GFPRs of $G(\lam)$ are Rosenbrock strong linearizations. We construct structure-preserving Rosenbrock strong linearizations of structured rational matrices in Section~4. Finally, we describe the recovery of eigenvectors, minimal bases and minimal indices of $G(\lam)$ from those of the Rosenbrock strong linearizations of $G(\lam)$ in Section~5.

\textbf{Notation.} We denote  by $\C[\lam]$  the ring (over $\C$) of scalar polynomials and  by  $\C(\lam)$ the field of rational functions of the form  $p(\lambda)/ q(\lambda),$ where $p(\lambda)$ and $q(\lambda)$ are  polynomials in $\mathbb{C}[\lambda].$  We denote by $\mathbb{C}[\lambda]^{m\times n}$ (resp., $\mathbb{C}(\lambda)^{m\times n}$) the vector space of $m\times n$ matrix polynomials (resp., rational matrices) over $\C$ (resp., over $\C(\lam)$). The spaces $\C[\lam]^m$ and $\C(\lam)^{m},$ respectively, denote $\mathbb{C}[\lambda]^{m\times n}$ and  $\mathbb{C}(\lambda)^{m\times n}$ when $n=1.$ We denote the \textit{j}-th column of the $n\times n $ identity matrix $I_n$ by $e_j$ and the transpose (resp., conjugate transpose) of an $m\times n$  matrix $A$ by $A^T$ (resp., $A^*$). The right and left null spaces of $A$ are given by $ \mathcal{N}_r(A) := \{ x \in \C^n : Ax = 0\}$ and $\mathcal{N}_l(A) := \{ x \in \C^m : x^T A = 0 \},$ respectively. We denote by $A\otimes B$ the Kronecker product of the matrices $A$ and $B.$

\section{Basic results} Let $ G(\lam) \in \C(\lam)^{m\times n}$. The rank of  $G(\lam)$ over the field $\C(\lam)$ is called the \textit{normal rank} of $G(\lam)$ and is denoted by $\nrk(G).$  If $\nrk(G) = n =m$ then $G(\lam)$  is said to be \textit{regular}, otherwise $G(\lam)$ is said to be \textit{singular}. A complex number $\mu \in \C$ is said to be an \textit{eigenvalue} of $G(\lambda)$ if $\rank(G(\mu)) < \nrk(G).$  We denote the set of eigenvalues of $G$ by $\eig(G).$ Let
$$
D(\lam) := \diag\left( \frac{\phi_{1}(\lam)}{\psi_{1}(\lam)}, \ldots, \frac{\phi_{k}(\lam)}{\psi_{k}(\lam)}, 0_{m-k, n-k}\right)$$
be the Smith-McMillan form~\cite{kailath, rosenbrock70} of  $G(\lam),$ where $ k:= \nrk(G)$ and the scalar polynomials $\phi_{i}(\lam)$ and $ \psi_{i}(\lam)$ are monic and pairwise coprime and  that $\phi_{i}(\lam)$ divides $\phi_{i+1}(\lam)$ and $\psi_{i+1}(\lam)$ divides $\psi_{i}(\lam),$ for $i= 1, 2, \ldots, k-1$. Set $\phi_{G}(\lam) := \prod _{j=1}^{k} \phi_{j}(\lam) \,\,\, \mbox{ and } \,\,\, \psi_{G}(\lam) := \prod _{j=1}^{k} \psi_{j}(\lam).$ Then $ \mu\in \C$ is a pole of $G(\lam) $ if $\psi_G(\mu) =0.$ A complex number $ \mu$ is said to be a {\em zero} of $G(\lam)$ if $ \phi_G(\mu) =0.$ The {\em spectrum} of $G(\lam)$ is given by $ \mathrm{Sp}(G) := \{ \lam \in \C : \phi_G(\lam) = 0\}$ and consists of the finite zeros of $G(\lam).$
 Note that $ \eig(G) \subset \mathrm{Sp}(G).$ See~\cite{rafinami1, kailath} for more on eigenvalues and zeros of $G(\lam).$

When $G(\lam)$ is singular, the {\em{right null space}} $\mathcal{N}_r(G) $ and the {\em{left null space}} $\mathcal{N}_l(G)$ of $G(\lam)$ are given by
\beano \mathcal{N}_r(G) &:=& \lbrace x(\lambda)\in\mathbb{C}(\lambda)^n :G(\lambda)x(\lambda) = 0 \rbrace   \subset \C(\lam)^n, \\
\mathcal{N}_l(G) &:=&\lbrace y(\lambda)\in\mathbb{C}(\lambda)^m:y(\lambda)^T G(\lambda) = 0\rbrace \subset \C(\lam)^m.\eeano
Let $ \mathcal{B} := \big( x_1(\lam), \ldots, x_p(\lam)\big)$ be a polynomial basis~\cite{kailath, forney}  of $\mathcal{N}_r(G)$ ordered so that $\deg(x_1) \leq \cdots \leq \deg(x_p),$ where $x_1(\lam), \ldots, x_p(\lam)$ are vector polynomials, that is, are elements of $\C[\lam]^n.$ Then $\mathrm{Ord}(\mathcal{B}) := \deg(x_1) + \cdots + \deg(x_p)$ is called the \textit{order} of the basis $\mathcal{B}.$  A basis $\mathcal{B}$ is said to be a minimal polynomial basis~\cite{kailath} of $\mathcal{N}_r(G)$ if $\mathcal{E}$ is any polynomial basis of $\mathcal{N}_r(G)$ then $ \mathrm{Ord}(\mathcal{E}) \geq \mathrm{Ord}(\mathcal{B}).$ A minimal polynomial basis $ \mathcal{B} := \big( x_1(\lam), \ldots, x_p(\lam)\big)$ of $\mathcal{N}_r(G)$ with $\deg(x_1) \leq \cdots \leq \deg(x_p)$ is called a {\em right minimal basis} of $G(\lam)$ and $ \deg(x_1) \leq \cdots \leq \deg(x_p)$ are called the {\em{right minimal indices}} of $G(\lam).$ A {\em{left minimal basis}} and the {\em{left minimal indices}} of $G(\lam)$ are defined similarly. See~\cite{kailath, forney} for further details.

 We say that a $k\times p$ matrix polynomial $ Z(\lam)$ is a minimal basis if the columns of $Z(\lam)$ form a minimal basis of the subspace of $\C(\lam)^k$ spanned (over the field $\C(\lam)$) by the columns of $Z(\lam).$

Let $ G(\lam) \in \C(\lam)^{n\times n}$. We consider a realization of $G(\lam)$ of the form
\begin{equation}\label{minrel2_RCh4}
 G(\lambda)= \sum\nolimits^m_{j=0} A_j \lam^j+C(\lambda E-A)^{-1}B =:   P(\lambda)+C(\lambda E-A)^{-1}B,
\end{equation}
where $ \lam E- A$ is an $r\times r$ matrix pencil with $E$ being nonsingular,  $C\in\mathbb{C}^{n\times r}$ and $B\in\mathbb{C}^{r\times n}.$
The realization (\ref{minrel2_RCh4}) is said to be {\em minimal} if the size of the pencil $\lam E- A$ is the smallest among all the realizations of $G(\lambda),$ see~\cite{kailath}. The matrix polynomial
\begin{equation} \label{slamsystemmatrix_RCh4}
{\mathcal{S}}(\lambda ) : = \left[
                            \begin{array}{c|c}
                              P(\lambda) & C \\
                              \hline
                              B & A-\lambda E
                            \end{array}
                          \right]
\end{equation} is called the  {\em{system matrix}} (or the Rosenbrock system matrix) of $G(\lam)$ associated with the realization (\ref{minrel2_RCh4}). The system matrix $\mathcal{S}(\lam)$ is said to be {\em irreducible } if the realization (\ref{minrel2_RCh4}) is minimal. The system matrix  $\mathcal{S}(\lam)$ is irreducible  if and only if $\rank\Big(\left[\begin{array}{cc} B & A - \lam E\end{array}\right]\Big) = r = \rank\Big(\left[\begin{array}{cc} C^T & (A - \lam E)^T\end{array}\right]^T\Big), $ see~\cite{kailath, rosenbrock70}. Observe that  $\eig(G) \subset \eig(\mathcal{S})$ and we have  $ \eig(\mathcal{S}) = \mathrm{Sp}(G)$ when $\mathcal{S}(\lam)$ is irreducible, see~\cite{rafinami1, rosenbrock70}.

An $n\times n$ matrix polynomial  $U(\lam)$ is said to be {\em unimodular} if $ \det(U(\lam))$ is a nonzero constant independent of $\lam.$ A rational matrix $G(\lam)$ is said to be {\em proper } if $ G(\lam) \rightarrow D$ as $ \lam \rar \infty,$ where $D$ is a matrix. An $n\times n$ rational matrix $F(\lam)$ is said to be {\em  biproper} if $F(\lam)$ is proper and $F(\infty)$ is a nonsingular matrix~\cite{vardulakis}.

\begin{definition}[\cite{rafiran1}] \label{stln} Let  $\mathbb{L}(\lam)$ be an $(mn+r)\times (mn+r)$ irreducible system matrix of the form
\be\label{lbb} \mathbb{L}(\lam) := \left[
                                           \begin{array}{c|c}
                                             \mathcal{X}- \lam \mathcal{Y} & \mathcal{C} \\
                                             \hline
                                             \mathcal{B} &  H - \lam K \\
                                           \end{array}
                                         \right], \ee where $ H- \lam K$ is an $r\times r$  pencil with $K$ being nonsingular. Then  $\mathbb{L}(\lam)$ is said to be a Rosenbrock strong linearization of $G(\lam)$ if the following                                       conditions hold.

\begin{itemize}
\item[(a)] There exist $mn\times mn$ unimodular matrix polynomials $U(\lam)$ and $V(\lam)$, and $r\times r$ nonsingular matrices $U_0$ and $V_0$  such that
	\be
	\left[
	\begin{array}{c|c}
		U(\lam) & 0 \\
		\hline
		0 & U_0 \\
	\end{array}
	\right] \mathbb{L} (\lam) \left[
	\begin{array}{c|c}
		V(\lam) & 0 \\
		\hline
		0 & V_0 \\
	\end{array}
	\right] = \left[
	\begin{array}{c|c}
		I_{(m-1)n} & 0 \\
		\hline
		0 &  \mathcal{S}(\lam) \\
	\end{array}
	\right]. \nonumber \ee
	
\item[(b)]	There exist $mn\times mn$  biproper rational  matrices $\mathcal{O}_{\ell} (\lam)  $ and $\mathcal{O}_r (\lam) $ such that
	\begin{equation}
	\mathcal{O}_{\ell} (\lam)  \, \lam^{-1} \mathbb{G} (\lam)  \, \mathcal{O}_r (\lam) =
	\left[ \begin{array}{c|c}
	I_{(m-1)n} & 0 \\
	\hline
	0 & \lam^{-m} G(\lam) \\
	\end{array}
	\right], \nonumber
	\end{equation}
	where $\mathbb{G}(\lam) :=\mathcal{X}- \lam \mathcal{Y} + \mathcal{ C} (\lam K -H)^{-1} \mathcal{ B}  $ is the transfer function of $ \mathbb{L}(\lam)$.
\end{itemize}
The pencil $\mathbb{L}(\lam)$ is also referred to as a Rosenbrock strong linearization of $\mathcal{S}(\lam).$

\end{definition}

We refer to \cite{rafiran1}  for more on Rosenbrock strong linearizations of $G(\lam)$ and the relation between the  structural indices of (finite and infinite) zeros and poles of $G(\lam)$ and $\mathbb{L}(\lam).$
Suffice it to say that the condition (a) ensures~(see, \cite[Theorem~3.4]{rafinami3}) that $ U(\lam) \mathbb{G}(\lam)V(\lam)  = \diag( I_{(m-1)n}, \; G(\lam))$ which in turn ensures that $G(\lam)$ and $\mathbb{G}(\lam)$ have the same finite zeros and poles. The irreducibility of $\mathbb{L}(\lam)$ guarantees that the finite zeros and poles of $\mathbb{G}(\lam)$ are the same as the finite eigenvalues of $\mathbb{L}(\lam)$ and  $H - \lam K,$ respectively; see~\cite{kailath, rafiran1}. On the other hand, the condition (b) ensures that the structural indices of zeros and poles of $G(\lam)$ at infinity can be recovered from the structural indices of eigenvalues and poles of $\mathbb{L}(\lam)$ at infinity (see~\cite{rafiran1}). Thus the zeros and poles of $G(\lam)$ including their structural indices can be obtained by solving the eigenvalue problems $\mathbb{L}(\lam) v = 0$ and $(H-\lam K)u=0$; see~\cite{rafinami1, rafinami2, rafinami3, rafiran1}. As mentioned in~\cite{rafiran1}, Definition~\ref{stln} is equivalent to the definition of strong linearization of rational matrices presented in~\cite{admz}.


\subsection{Fiedler matrices}\label{sectionflp_RCh4}
 For $k, \ell \in \mathbb{Z},$ we use the following notation
	$$k: \ell := \left\{ \begin{array}{cc}
	k,k+1,\ldots, \ell & \mbox{if}~ k \leq \ell, \\
	\emptyset & \mbox{if}~ k > \ell.
	\end{array} \right.$$
When $ k \leq \ell $, $(k:\ell)$ is called a {\em string} of integers from $k$ to $\ell$.

 {\bf Assumption:} {\em  For the rest of the paper, we assume that $ P(\lam) : = \sum_{i=0}^{m} \lam^i A_i$ with $A_m \neq 0$ and the realization  $G(\lam)  = P(\lam) + C (\lam E - A)^{-1} B$ of $G(\lam)$  given by (\ref{minrel2_RCh4}) is minimal. The system matrix $\mathcal{S} (\lam)$ associated with $G(\lam)$ is given by (\ref{slamsystemmatrix_RCh4}).}

 For an arbitrary matrix $X \in \mathbb{C}^{n \times n}$, we define the elementary matrices by~\cite{BDFR15}
$$ M_{0} (X) := \left[
                   \begin{array}{@{}cc@{}}
                     I_{(m-1)n} &  \\
                      & X \\
                   \end{array}
                 \right],~   M_{i} (X) := \left[
                 \begin{array}{@{}cccc@{}}
                 I_{(m-i-1)n} &  & & \\
                 & X & I_{n} & \\
                 & I_{n} & 0  & \\
                 &   &   & I_{(i-1)n}\\
                 \end{array}
                 \right] ~\mbox{for}~ i= 1: m-1,$$
$$ M_{-m}(X) := \left[
\begin{array}{@{}cc@{}}
X &  \\
& I_{(m-1)n} \\
\end{array}
\right],  ~ M_{-i}(X) := \left[
  \begin{array}{@{}c@{\;}ccc@{}}
    I_{(m-i-1)n} &  & & \\
     & 0 & I_{n} & \\
     & I_{n} & X  &  \\
     &   &   & I_{(i-1)n}\\
  \end{array}
\right] ~\mbox{for}~ i= 1: m-1.$$
Note that, for $i=1:m-1$, $M_i(X)$ and $M_{-i}(X)$ are invertible and $(M_i(X))^{-1} = M_{-i}(-X)$ for any arbitrary matrix $X \in \mathbb{C}^{n \times n}$. On the other hand, the matrices $M_{0} (X)$ and $M_{-m} (X)$ are invertible if and only if $X$ is invertible. Further, $M_{i} (X) M_{j} (Y) = M_{j} (Y) M_{i} (X)$ holds for any matrices $X, Y \in \mathbb{C}^{n \times n}$ if $||i|-|j|| > 1$,  see \cite{BDFR15}. For $i \in \{-m:m-1\}$, we define \cite{BDFR15}
$$ M_i^P  : =
     \left\{ \begin{array}{ll}
              M_i(-A_i) & \mbox{if}~ i \geq 0,\\
              M_i(A_{-i}) & \mbox{if}~ i <  0.
              \end{array}
        \right.$$
Then $ M_i^P $, $i \in \{-m:m-1\}$, are the Fiedler matrices of $P(\lam) $ (see \cite{tdm}).

For an arbitrary matrix $X \in \mathbb{C}^{n \times n} $, we define $(mn+r)\times (mn+r)$ elementary matrices  $ \mathbb{M}_{i}(X)$ by
$$\mathbb{M}_{i}(X) := \left[
                       \begin{array}{c|c}
                         M_{i}(X) &  \\
                         \hline
                          & I_{r} \\
                       \end{array}
                     \right] \; \mbox{ for } \; i\in \{ -m:m-1\}.$$
Note that $\mathbb{M}_i(X)$ and $\mathbb{M}_{-i}(X)$ are invertible and $(\mathbb{M}_i(X))^{-1} = \mathbb{M}_{-i}(-X) $  for $i=1:m-1$. On the other hand, the matrices $\mathbb{M}_{0} (X)$ and $\mathbb{M}_{-m} (X)$ are invertible if and only if $X$ is invertible.  For any arbitrary matrices $X,Y \in \mathbb{C}^{n \times n}$, we have  $\mathbb{M}_{i} (X) \mathbb{M}_{j} (Y) = \mathbb{M}_{j} (Y) \mathbb{M}_{i} (X)$ if $||i|-|j|| > 1.$

The $(mn+r)\times (mn+r)$ Fiedler matrices $ \mathbb{M}^\mathcal{S}_{i},$ $ i \in \{ -m:m-1\}$, associated with the system matrix (\ref{slamsystemmatrix_RCh4}) are defined by~\cite{rafinami1, rafinami3}
\begin{equation*}
\mathbb{M}^\mathcal{S}_{0} := \left[
                       \begin{array}{c|c}
                         M^P_{0} & -e_{m} \otimes C \\
                         \hline \\[-1em]
                         -e_{m}^{T}\otimes B & -A \\
                       \end{array}
                     \right], ~ \mathbb{M}^\mathcal{S}_{-m} := \left[
                                        \begin{array}{c|c}
                                          M^P_{-m} & 0 \\
                                          \hline
                                          0 & -E \\
                                        \end{array}
                                      \right], ~ \mathbb{M}^\mathcal{S}_{i} := \left[
                       \begin{array}{c|c}
                         M^P_{i} & 0 \\
                         \hline
                         0 & I_{r} \\
                       \end{array}
                     \right],\label{0mmfr_RCh4}
\end{equation*}
for $i =1:m-1$, and $ \mathbb{M}^\mathcal{S}_{-i}: = (\mathbb{M}^\mathcal{S}_{i})^{-1}$ for $i=1:m-1$.
The matrices $\mathbb{M}^\mathcal{S}_{i} $ are also referred to as Fiedler matrices of $G(\lam)$. We have $\mathbb{M}^\mathcal{S}_{i} \mathbb{M}^\mathcal{S}_{j} = \mathbb{M}^\mathcal{S}_{j} \mathbb{M}^\mathcal{S}_{i}$ for $ ||i|-|j|| > 1$, except for $ ||i|-|j|| =m.$ For convenience in defining Fiedler-like pencils, we define
\begin{equation}
\mathbb{M}^P_{i} := \left[
                       \begin{array}{c|c}
                         M^P_{i} & \\
                         \hline
                           & I_r \\
                       \end{array}
                     \right]~\text{for}~i \in \{ -m:m-1\}.
\end{equation}

\begin{remark}
	Note that $\mathbb{M}^\mathcal{S}_i = \mathbb{M}^P_{i}$, for $ i= \pm 1,\ldots ,\pm (m-1)$, and  $\mathbb{M}^\mathcal{S}_0 \neq  \mathbb{M}^P_{0} $ and $\mathbb{M}^\mathcal{S}_{-m} \neq  \mathbb{M}^P_{-m}$. The utility of the notation $\mathbb{M}^P_{i}$ will be clear when we analyze Fiedler-like pencils.
\end{remark}

\subsection{Index tuple} Permutations and sub-permutations are defined as follows.

\begin{definition}\cite{rafinami3} Let $N$ be a finite set. A bijection $\omega : N \rightarrow N$ is called a permutation of $N.$ $\tau$ is said to be a sub-permutation of $N$ if $\tau$ is a permutation of a subset of $N.$
\end{definition}

\begin{definition} \cite{rafinami3}  An ordered tuple $\textbf{t} : =(t_1 , t_2 ,\ldots , t_p)$ is said to be an index tuple containing indices from $\mathbb{Z}$ if $t_i \in \mathbb{Z}$ for $i=1:p.$ We define $- \textbf{t} : = ( -t_1, -t_2, \ldots , -t_p)$, $rev(\textbf{t}):=( t_p ,  \ldots, t_2, t_1)$ and $\textbf{t} + k:= ( t_1 +k , t_2+k, \ldots , t_p +k)$ for $k \in \mathbb{Z}. $ For any index tuples $\textbf{t} := ( t_1  , \ldots , t_p )$ and $\textbf{s} := ( s_1  , \ldots , s_q)$, we define  $\textbf{t} \cup \textbf{s} := (\textbf{t} ,\textbf{s}) = ( t_1  , \ldots , t_p , s_1  , \ldots , s_q)$.
\end{definition}

Next, we define SIP, rsf and csf of an index tuple which will  be used extensively.

\begin{definition} \cite{bueno, Vologiannidis} Let  $\sigma := (i_1, i_2, \ldots , i_t)$ be an index tuple containing indices from $\{0,1,\ldots, h\}$ for some non-negative integer $h$. Then:
\begin{enumerate}
\item[(a)] $\sigma$ is said to satisfy the Successor Infix Property (SIP) if for every pair of indices $i_a, i_b \in \sigma$ with $1 \leq a < b \leq t$ satisfying $i_a = i_b,$ there exists at least one index $i_c = i_a +1$ such that $a < c <b.$ Let $\alpha$  be an index tuple containing indices from $\{-h,-h+1,\ldots ,-1 \}.$ Then $\alpha$ is said to satisfy the SIP if $\alpha + h $ satisfies the SIP.

\item[(b)]$\sigma$ is said to be in  column standard form if
$$\sigma= ( a_s: b_s, a_{s-1}: b_{s-1}, \ldots , a_2:b_2, a_1 : b_1),$$
with $0 \leq b_1 < \cdots < b_{s-1} < b_s \leq h $ and $0 \leq a_j \leq b_j ,$ for all $j =1 , \ldots ,s. $ We denote the column standard form of $\sig$  by $csf(\sigma).$ Let $\beta$  be an index tuple containing indices from $\{-h,-h+1,\ldots ,-1 \}.$ Then $\beta$ is said to be in column standard form if $\beta + h$ is in column standard form.
\end{enumerate}
\end{definition}

\begin{definition} \cite{BDFR15}  Let $\alpha$ and $\beta$ be two index tuples. Then $\alpha$ is said to be a subtuple of $\beta$ if $\alpha = \beta$ or if $\alpha$ can be obtained from $\beta$ by deleting some indices in $\beta.$
\end{definition}

\begin{example} Let $\alpha = ( 1,2,0,3,0,2)$ be an index tuple. Then  $(2,3,2)$ is a subtuple of $\alpha$ but $ ( 2,2,3)$ is not a subtuple of $\alpha.$
\end{example}

We now present the concept of consecutive consecutions and consecutive inversions of an index tuple which we will  use extensively in the paper.

\begin{definition}[\cite{rafiran3}, Consecutions and inversions] \label{coninvoftuple_RCh4} Let $\alpha$ be an index tuple containing indices from $\{ 0: m\}$. Suppose that $t \in \alpha$. Then we say that $\alpha$ has $p$ consecutive consecutions at $t$ if $ ( t , t+1 , \ldots, t+p)$ is a subtuple of $\alpha$ and $ ( t , t+1 , \ldots, t+p, t+p+1)$ is not a subtuple of $\alpha$. We denote the number of consecutive consecutions of $\alpha$ at $t$ by $ c_{t}(\alpha)$. Similarly, we say that $\alpha$ has $s$ consecutive inversions at $t$ if $ (t+s, \ldots, t+1, t)$ is a subtuple of $\alpha$ and $ (t+s+1, t+s, \ldots, t+1, t)$ is not a subtuple of $\alpha$. We denote the number of consecutive inversions of $\alpha$ at $t$ by $ i_{t}(\alpha)$. For any index $k \in \{ 0: m\} $, if $ k \notin \alpha$, we define $ c_k(\alpha) : =-1$ and $i_k(\alpha) := -1.$
\end{definition}

\begin{example} \label{exampleofnewconse_RCh4} Let $\alpha := ( 1,0,2,1,3,2,4,1,3,2,1)$ be an index tuple containing indices from $\{0:6\}$.  Then $ c_0(\alpha) = 3$ as $(0,1,2,3)$ is a subtuple of $\alpha$ and $(0,1,2,3,4)$ is not a subtuple of $\alpha$.
\end{example}

\begin{remark} \cite{rafiran3} Let $\alpha$ be a permutation of $\{0 :m-1\}$. We denote the total number of  consecutions and inversions of $\alpha$  by $c(\alpha)$ and $i(\alpha),$ respectively. Note that $ c(\alpha) + i(\alpha) = m-1.$
\end{remark}

\section{Generalized Fiedler pencils with repetition} We now introduce a new family of Fiedler-like pencils for rational matrices which we refer to as generalized Fiedler pencils with repetition (GFPRs). We proceed as follows.

\begin{definition}[\cite{BDFR15}, Matrix assignments] Let $\textbf{t} := (t_1 ,t_2, \ldots , t_k)$ be an index tuple containing indices from $\{ -m : m-1 \}$ and $X:=( X_1 ,X_2, \ldots , X_k)$ be a tuple of $n \times n$ matrices. We define
	$ M_{\textbf{t}} (X) : = M_{t_1} (X_1) M_{t_2} (X_2) \cdots M_{t_k} (X_k)$ and say that $X$ is a matrix assignment for $\textbf{t}$. Further, we say that the matrix $X_j$ is assigned to the position $j$ in $\textbf{t}$. The matrix assignment $X$ for $\textbf{t}$ is said to be nonsingular if the matrices assigned by $X$ to the positions in $\textbf{t}$ occupied by the $ 0$ and $- m$ indices are nonsingular. Further, we define $rev(X) : = ( X_k, \ldots, X_2,X_1)$.
\end{definition}

Let $\textbf{t} := (t_1 , \ldots, t_k)$ be an index tuple containing indices from $ \{ -m : m-1\}$ and $X := (X_1, \ldots , X_k)$ be  a matrix assignment for $\textbf{t}$.
Then  we say that $X $ is the trivial matrix assignment for the index tuple $\textbf{t}$ associated with the matrix polynomial $P(\lambda)$  if $ M_{t_j} (X_j) = M_{t_j}^P$ for $j=1:k.$ Further, we define $ M^P_{\textbf{t}} : = M^P_{t_1}  \cdots M^P_{t_k}$. Similarly, we define
$\mathbb{M}_{\textbf{t}} (X):=\mathbb{M}_{t_1} (X_1)  \cdots \mathbb{M}_{t_k}(X_k)$,
$\mathbb{M}^{\mathcal{S}}_{\textbf{t}} :=\mathbb{M}^{\mathcal{S}}_{t_1} \cdots \mathbb{M}^{\mathcal{S}}_{t_k}$, and  $\mathbb{M}^{P}_{\textbf{t}} :=\mathbb{M}^{P}_{t_1} \cdots \mathbb{M}^{P}_{t_k}$.

\begin{definition}[GFPR of $G(\lam)$] \label{p02gfprglam2m181a_RCh4} Let $0 \leq h \leq m-1,$ and let $\sigma$ and $\tau$ be permutations of $\{ 0 :h \}$ and $\{ -m :-h -1\},$ respectively. Let $\sigma_1$ and $\sigma_2$ be index tuples containing indices from $\{0 :h-1 \}$ such that $(\sigma_1,\sigma , \sigma_2)$ satisfies the SIP. Similarly, let $\tau_1$ and $\tau_2$ be index tuples containing indices from $\{-m :-h -2 \}$ such that $( \tau_1 ,\tau , \tau_2)$ satisfies the SIP. Let $X_1 , X_2, Y_1$ and $Y_2$ be any arbitrary matrix assignments for $\sigma_1, \sigma_2, \tau_1$ and $ \tau_2$, respectively. Then the pencil
	\begin{equation} \label{gfprunknowncoeff13m181215_RCh4}
	\mathbb{L}(\lambda):=  \mathbb{M}_{\tau_1} (Y_1) \, \mathbb{M}_{\sigma_1} (X_1) \, (\lambda \mathbb{M}^\mathcal{S}_{\tau}  -  \mathbb{M}^\mathcal{S}_{\sigma})\, \mathbb{M}_{\sigma_2} (X_2)\, \mathbb{M}_{\tau_2}(Y_2)
	\end{equation}
	is said to be a generalized Fiedler pencil with repetition (GFPR) of $G(\lambda)$.  We also refer to $\mathbb{L}(\lam)$ as a GFPR of $\mathcal{S}(\lambda)$.
\end{definition}

Note that if all the matrix assignments $X_1 , X_2, Y_1$ and $Y_2$ in Definition~\ref{p02gfprglam2m181a_RCh4} are the trivial matrix assignments  then $	 \mathbb{L}(\lambda)=  \mathbb{M}^P_{\tau_1} \mathbb{M}^P_{\sigma_1} (\lambda \mathbb{M}^\mathcal{S}_{\tau}  -  \mathbb{M}^\mathcal{S}_{\sigma}) \mathbb{M}^P_{\sigma_2} \mathbb{M}^P_{\tau_2} $ is called a Fiedler pencil with repetition (FPR) of $G(\lam)$ \cite{behera,rafiran2}.  Hence the family of FPRs  of $G(\lam)$ is a subclass of the family of GFPRs of $G(\lam)$.

\begin{example} \label{examplegfprma_RCh4} Let  $G(\lambda) := \sum_{i=0}^4 \lambda^i A_i + C(\lambda E- A)^{-1}B.$ Consider  $\sigma:=(1,2,3,0),$ $\tau:=(-4),$  $\sigma_2:=(2,1)$ and $\sigma_1 = \tau_1=\tau_2= \emptyset$.  Then
	$$  \big( \lambda \mathbb{M}^\mathcal{S}_{-4} - \mathbb{M}^\mathcal{S}_{(1,2,3,0)} \big ) \mathbb{M}_{(2,1)} (X,Y) =  \left[
	\begin{array}{@{}cccc|c@{}}
	\lambda A_4 + A_3 & -X & -Y & -I_n &0\\
	A_2& \lambda X -I_n  & \lambda Y &\lambda I_n &0\\
	A_1& \lambda I_n &  A_0 &0 &C\\
	-I_n & 0& \lambda I_n &0 &0\\ \hline
	0& 0 &B &0 & A- \lambda E
	\end{array}\right]$$
	is a GFPR of $G(\lam)$, where $(X,Y)$ is an arbitrary  matrix assignment for $\sig_2$.
\end{example}

\begin{remark} 	The pencil $L(\lambda):=  M_{\tau_1} (Y_1) M_{\sigma_1} (X_1) (\lambda M^P_{\tau}  -  M^P_{\sigma}) M_{\sigma_2} (X_2) M_{\tau_2} (Y_2)$
	is called a generalized Fiedler pencil with repetition (GFPR) of $P(\lambda)$ \cite{BDFR15}, where
	$\sig, \tau, \sig_j$ and $\tau_j$, $j=1,2$, are as given in Definition~\ref{p02gfprglam2m181a_RCh4}. 	In particular, if $X_1 , X_2, Y_1$ and $Y_2$ are the trivial matrix assignments then $	L(\lambda) := M^P_{\tau_1} M^P_{\sigma_1} (\lambda M^P_{\tau}  -  M^P_{\sigma}) M^P_{\sigma_2} M^P_{\tau_2} $ is called a Fiedler pencil with repetition (FPR) of $P(\lam)$ \cite{Vologiannidis, bueno}.
\end{remark}

We now show that a GFPR of $G(\lam)$ can be constructed directly from a GFPR of $P(\lam)$ without performing any arithmetic operations. For this purpose we need the following result which is given in \cite[Lemma~3.10]{rafiran3}.

\begin{lemma} \cite{rafiran3} \label{lemmblockrowncolp0221d17ee_RCh4} Let $ L(\lambda):= M_{(\tau_1,\sigma_1)} (Y_1,X_1) (\lambda M^P_{\tau} -  M^P_{\sigma}) $ $ M_{(\sigma_2,\tau_2)} (X_2,Y_2) $ be a GFPR of $P(\lambda)$. Then we have $(e_{m-c_0(\sigma)}^{T}\otimes I_n) M_{(\sigma_{2},\tau_{2})} (X_2,Y_2) =  e_{m-c_0(\sigma, \sigma_2)}^{T}\otimes I_n$ and $
	M_{(\tau_{1},\sigma_{1})} (Y_1,X_1) \,(e_{m-i_0(\sigma)}\otimes I_n) = e_{m-i_0(\sigma_1, \sigma)}\otimes I_n.$
\end{lemma}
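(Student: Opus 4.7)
The plan is to track the block-row selector $e_{m-c_0(\sigma)}^T \otimes I_n$ as it is multiplied by the elementary factors of $M_{(\sigma_2,\tau_2)}(X_2,Y_2) = M_{\sigma_2}(X_2)\,M_{\tau_2}(Y_2)$, read left to right. First I would dispose of the $\tau_2$-part: every factor $M_{-i}(Y)$ with $i \in \{h+2:m\}$ is block-diagonal with $I_n$ on block-rows $\{m-h:m\}$. Since $(\sigma,\sigma_2)$ uses indices in $\{0:h\}$, we have $0 \leq c_0(\sigma,\sigma_2) \leq h$, so the row index in question always lies in $\{m-h:m\}$ and is fixed by every factor of $M_{\tau_2}(Y_2)$. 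Hence it suffices to prove
\[
(e_{m-c_0(\sigma)}^T \otimes I_n)\, M_{\sigma_2}(X_2) \;=\; e_{m-c_0(\sigma,\sigma_2)}^T \otimes I_n.
\]

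I would establish this by induction on the length of $\sigma_2$, processing factors left to right. Writing $\sigma_2 = (s_1, \ldots, s_p)$ and $c_j := c_0(\sigma, s_1, \ldots, s_j)$, a direct inspection of the block form of $M_s(X)$ shows that a pure selector $e_k^T \otimes I_n$ multiplied on the right by $M_s(X)$ remains pure in exactly two cases: if $s \notin \{m-k, m-k+1\}$ the selector is unchanged, and if $s = m-k+1$ it descends by one to $e_{k-1}^T \otimes I_n$; the remaining case $s = m-k$ introduces an extraneous $X$-block. To maintain the invariant that after $j$ steps the selector equals $e_{m-c_j}^T \otimes I_n$: in the descent case $s_j = c_{j-1}+1$ the tuple $(0,1,\ldots,c_{j-1},c_{j-1}+1)$ becomes a subtuple, so $c_j = c_{j-1}+1$ and $k_j = k_{j-1}-1$; in the unchanged case $s_j \notin \{c_{j-1}, c_{j-1}+1\}$ one readily verifies $c_j = c_{j-1}$. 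The crux is to rule out the destructive case $s_j = c_{j-1}$. Suppose it occurs, and let $p_*$ be the last position of $c_{j-1}$ in $(\sigma, s_1, \ldots, s_{j-1})$. Applying SIP on $(\sigma,\sigma_2)$ to the pair of positions $p_*$ and $|\sigma|+j$ (both carrying $c_{j-1}$) yields a position $q$ with $p_* < q < |\sigma|+j$ and value $c_{j-1}+1$, so $q$ belongs to $(\sigma, s_1, \ldots, s_{j-1})$. Appending $q$ to any witnessing subtuple $(0,1,\ldots,c_{j-1})$, whose terminal position is at most $p_*$, produces $(0,1,\ldots,c_{j-1}+1)$ as a subtuple of $(\sigma, s_1, \ldots, s_{j-1})$, contradicting the maximality defining $c_{j-1}$. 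The induction closes with $k_p = m - c_p = m - c_0(\sigma,\sigma_2)$.

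The second identity $M_{(\tau_1,\sigma_1)}(Y_1,X_1)(e_{m-i_0(\sigma)}\otimes I_n) = e_{m-i_0(\sigma_1,\sigma)}\otimes I_n$ follows by the symmetric argument for block-columns: the $\tau_1$-factors act as the identity on the relevant column range, and then $M_{\sigma_1}(X_1)$ is processed right to left, each column ascent corresponding to a new prepended inversion index in $(\sigma_1,\sigma)$, with SIP on $(\sigma_1,\sigma)$ again ruling out the non-pure collision. The main obstacle is the combinatorial step above: converting SIP into the precise statement that the destructive collision $s_j = c_{j-1}$ cannot occur requires correctly identifying the last prior occurrence of $c_{j-1}$, extracting the SIP-mandated $c_{j-1}+1$ strictly after it, and splicing these with the witnessing subtuple to build the forbidden longer consecution. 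Everything else is mechanical block-matrix multiplication.
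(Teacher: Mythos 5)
Your proof is correct. The paper states this lemma with a citation to~\cite{rafiran3} and supplies no in-text argument of its own, so there is nothing to compare against; your inductive selector-tracking argument is the natural one here, and every step checks out. The $\tau_2$ (resp.\ $\tau_1$) factors are harmless precisely because $0\le c_0(\sigma,\sigma_2)\le h$ (resp.\ $0\le i_0(\sigma_1,\sigma)\le h$) pins the selector to block rows (resp.\ columns) in $\{m-h,\dots,m\}$, while $M_{-i}(\cdot)$ with $i\in\{h+2:m\}$ acts as the block identity there. The three-case behaviour of $e_k^T\otimes I_n$ against $M_s(X)$ (unchanged if $s\notin\{m-k,m-k+1\}$, descends to $e_{k-1}^T\otimes I_n$ if $s=m-k+1$, collision if $s=m-k$) matches the invariant via $k_{j-1}=m-c_{j-1}$, and the SIP argument that bars the collision $s_j=c_{j-1}$ is set up correctly: the last prior occurrence $p_*$ of $c_{j-1}$, the SIP-mandated $c_{j-1}+1$ strictly between $p_*$ and position $|\sigma|+j$, and the splice onto a witness subtuple ending at or before $p_*$ together yield the forbidden longer consecution. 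The mirrored column argument for the second identity is symmetric, as you indicate. Two points left implicit that a referee would want spelled out: (i) the bare assertion that $c_j=c_{j-1}$ in the unchanged case (any witness for a larger $c_j$ must end at $s_j$, forcing $s_j=c_j=c_{j-1}+1$, contradiction), and (ii) that SIP on the full tuple $(\sigma_1,\sigma,\sigma_2)$ passes to every contiguous sub-tuple such as $(\sigma,s_1,\dots,s_j)$ or $(\sigma_1,\sigma)$, which both halves of the argument rely on.
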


\begin{theorem}\label{gfprptoge_RCh4} Let $ \mathbb{L}(\lambda):=  \mathbb{M}_{(\tau_1,\sigma_1)} (Y_1, X_1) \, (\lambda \mathbb{M}^\mathcal{S}_{\tau} -  \mathbb{M}^\mathcal{S}_{\sigma}) \, \mathbb{M}_{(\sigma_2,\tau_2)} (X_2,Y_2) $ and $ L(\lambda):= M_{(\tau_1,\sigma_1)} (Y_1,X_1) \,  (\lambda M^P_{\tau} -  M^P_{\sigma}) \, M_{(\sigma_2,\tau_2)} (X_2,Y_2) $ be  GFPRs of $G(\lambda)$ and $P(\lambda)$, respectively. Then
	$$ \mathbb{L}(\lambda)  =
	\left[
	\begin{array}{c|c}
	L(\lam) &  e_{m-i_0(\sigma_1,\sigma)} \otimes C \\[.1em] \hline \\[-1em]
	e^T_{m-c_0( \sigma, \sigma_2)} \otimes B  & A-\lam E \\
	\end{array}
	\right].$$
	Thus, the map  $ \mathrm{GFPR}(P) \rightarrow \mathrm{GFPR}(G), \, L(\lam) \mapsto  \left[
	\begin{array}{@{}c|c@{}}
	L(\lam) &  e_{m-i_0 ( \sigma_1, \sigma)} \otimes C \\[.1em] \hline \\[-1em]
	e^T_{m-c_0(\sigma, \sigma_2)} \otimes B  & A-\lam E
	\end{array}
	\right] $ is a bijection, where GFPR(P) and GFPR(G) denote the set of GFPRs of $P(\lambda)$ and $G(\lambda),$ respectively.
	
\end{theorem}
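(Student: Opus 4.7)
The approach is to reduce the problem to the polynomial GFPR case by isolating the unique factor in the defining product that couples the polynomial block with the realization block, namely $\mathbb{M}^{\mathcal{S}}_0$. Every other elementary matrix appearing in $\mathbb{L}(\lambda)$ is block-diagonal with the $r \times r$ bottom-right corner being either $I_r$ (for $\mathbb{M}_i(X)$ with any $i, X$, and for $\mathbb{M}^{\mathcal{S}}_i$ with $i \neq 0, -m$) or $-E$ (for $\mathbb{M}^{\mathcal{S}}_{-m}$). In particular, because $\sigma_1, \sigma_2$ draw from $\{0:h-1\}$ and $\tau_1, \tau_2$ from $\{-m:-h-2\}$, the indices $0$ and $-m$ never appear in the outer tuples, so $\mathbb{M}_{(\tau_1,\sigma_1)}(Y_1,X_1)$ and $\mathbb{M}_{(\sigma_2,\tau_2)}(X_2,Y_2)$ collapse to $\mathrm{diag}(M_{(\tau_1,\sigma_1)}(Y_1,X_1),I_r)$ and $\mathrm{diag}(M_{(\sigma_2,\tau_2)}(X_2,Y_2),I_r)$, respectively.

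I would next split $\sigma = \sigma^L \cup (0) \cup \sigma^R$ and $\tau = \tau^L \cup (-m) \cup \tau^R$ at the unique occurrences of $0$ and $-m$. A routine block computation using the block-diagonal structure of the non-special factors then yields $\mathbb{M}^{\mathcal{S}}_\tau = \mathrm{diag}(M^P_\tau, -E)$ and
\begin{equation*}
\mathbb{M}^{\mathcal{S}}_\sigma = \left[\begin{array}{c|c} M^P_\sigma & -M^P_{\sigma^L}(e_m \otimes C) \\ \hline -(e_m^T \otimes B) M^P_{\sigma^R} & -A \end{array}\right].
\end{equation*}
Sandwiching $\lambda \mathbb{M}^{\mathcal{S}}_\tau - \mathbb{M}^{\mathcal{S}}_\sigma$ between the two block-diagonal outer factors gives
\begin{equation*}
\mathbb{L}(\lambda) = \left[\begin{array}{c|c} L(\lambda) & M_{(\tau_1,\sigma_1)}(Y_1,X_1)\, M^P_{\sigma^L}(e_m \otimes C) \\ \hline (e_m^T \otimes B)\, M^P_{\sigma^R}\, M_{(\sigma_2,\tau_2)}(X_2,Y_2) & A - \lambda E \end{array}\right],
\end{equation*}
so the top-left block is exactly the polynomial GFPR $L(\lambda)$ and the bottom-right is $A - \lambda E$, as required.

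It remains to simplify the two off-diagonal blocks. The plan is to establish two auxiliary propagation identities for Fiedler matrices of $P(\lambda)$, namely $M^P_{\sigma^L}(e_m \otimes I_n) = e_{m - i_0(\sigma)} \otimes I_n$ and $(e_m^T \otimes I_n)M^P_{\sigma^R} = e_{m - c_0(\sigma)}^T \otimes I_n$. Both are proved by induction on the length of $\sigma^L$, respectively $\sigma^R$, using the explicit block structure of $M_i(X)$: the $m$-th block row/column carries the identity in position $m$ when $|i| \geq 2$, and is shifted to position $m-1$ when $|i| = 1$; the SIP hypothesis on $(\sigma_1, \sigma, \sigma_2)$ ensures these shifts accumulate to exactly $i_0(\sigma)$ and $c_0(\sigma)$. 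Feeding these identities into Lemma~\ref{lemmblockrowncolp0221d17ee_RCh4} transports $e_{m - i_0(\sigma)} \otimes C$ and $e_{m - c_0(\sigma)}^T \otimes B$ through the outer factors to $e_{m - i_0(\sigma_1, \sigma)} \otimes C$ and $e_{m - c_0(\sigma, \sigma_2)}^T \otimes B$, establishing the claimed formula. The bijection statement is then immediate: the off-diagonal and bottom-right blocks depend only on the fixed index data $\sigma, \sigma_1, \sigma_2$, so the inverse map simply reads off the top-left $mn \times mn$ block of $\mathbb{L}(\lambda)$. I anticipate the main technical obstacle to be the careful verification of the two propagation identities, since the induction must precisely match the combinatorial definitions of $i_0(\sigma)$ and $c_0(\sigma)$; nevertheless these are purely Fiedler-polynomial facts analogous to those developed in~\cite{BDFR15, rafiran3}.
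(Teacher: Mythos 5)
Your approach is essentially the paper's: split $\sigma$ at $0$, use block-diagonality of all remaining factors to expose the off-diagonal blocks, then collapse them via the identities $M^P_{\sigma^L}(e_m\otimes I_n)=e_{m-i_0(\sigma)}\otimes I_n$ and $(e_m^T\otimes I_n)M^P_{\sigma^R}=e_{m-c_0(\sigma)}^T\otimes I_n$ followed by Lemma~\ref{lemmblockrowncolp0221d17ee_RCh4}; the only difference is that the paper cites \cite[Theorem~5.12]{rafiran2} for the propagation identities you propose to re-derive by induction. One small inaccuracy worth noting: $0$ can lie in $\sigma_1,\sigma_2$ (their indices come from $\{0:h-1\}$, which contains $0$ whenever $h\geq 1$) and $-m$ can lie in $\tau_1,\tau_2$, but this is harmless because the outer factors use $\mathbb{M}_i(X)=\mathrm{diag}(M_i(X),I_r)$, which is block-diagonal for every $i$ --- a fact you already state correctly at the start.
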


\begin{proof} Let  $\sigma$ be given by $ \sigma =( \delta_1, 0, \delta_2).$ A straight forward calculation shows that
	{\small
		\begin{eqnarray}
		\mathbb{L}(\lam) &= & \mathbb{M}_{(\tau_1,\sigma_1)} (Y_1,X_1)  \left ( \lam \left[
		\begin{array}{@{}c@{\,}|@{\,}c@{}}
		M^P_\tau & 0 \\
		\hline
		0 & -E \\
		\end{array}
		\right]  - \left[
		\begin{array}{@{}c@{\,}|@{\,}c@{}}
		M^P_{\delta_1} M^P_{0} M^P_{\delta_2} &  M^P_{\delta_1}(-e_{m} \otimes C) \\
		\hline \\[-1em]
		(-e_{m}^{T} \otimes B) M^P_{\delta_2} & -A \\
		\end{array}
		\right] \right )  \mathbb{M}_{(\sigma_2,\tau_2)} (X_2,Y_2)  \nonumber  \\
		&=& \left[
		\begin{array}{@{}c@{\,}|@{\,}c@{}}
		L(\lambda) & M_{(\tau_{1},\sigma_{1})}(Y_1,X_1) M^P_{\delta_{1}}(e_{m}\otimes C) \\[.1em] \hline \\[-1em]
		(e_{m}^{T}\otimes B) M^P_{\delta_{2}}M_{(\sigma_{2},\tau_{2})}(X_2,Y_2) & A-\lambda E \\
		\end{array}
		\right].  \label{LlamP2LlamG}
		\end{eqnarray}}
	It is shown in the proof of \cite[Theorem~5.12]{rafiran2} that  $ M^P_{\delta_{1}}(e_{m}\otimes I_n) = e_{m-i_0(\sigma)}\otimes I_n$ and $ (e^T_{m}\otimes I_n) M^P_{\delta_{2}} = e^T_{m-c_0(\sigma)}\otimes I_n$. Consequently, by Lemma \ref{lemmblockrowncolp0221d17ee_RCh4}, we have
	$(e^T_{m} \otimes I_n)M^P_{\delta_2}M_{(\sigma_2,\tau_2)}(X_2,Y_2) =e^T_{m-c_0(\sigma, \sigma_2)} \otimes I_n$ and $M_{(\tau_1,\sigma_1)} (Y_1,X_1) M^P_{\delta_1}(e_{m} \otimes I_n) = e_{m-i_0(\sigma_1,\sigma)} \otimes I_n$. Hence the desired form of $\mathbb{L}(\lam)$ follows from (\ref{LlamP2LlamG}).
\end{proof}

{\begin{remark}
We mention that FPRs and GFPRs of matrix polynomials can be generated by  automatic algorithms without performing any arithmetic operations (see,  Algorithms 1,2,3 and 4, in \cite[Pages 49-52]{RanjanThesis}). Thus, in view of Theorem~\ref{gfprptoge_RCh4}, GFPRs of rational matrices can be generated by an operation-free automatic algorithm.
\end{remark}

\subsection{Fiedler-like pencils are Rosenbrock strong linearizations} We now show that Fiedler pencils (FPs), generalized Fiedler pencils (GFPs) and  GFPRs of  $G(\lam)$ are Rosenbrock strong linearizations of $G(\lam)$. First, we show that the FPs of $G(\lam)$  introduced in \cite{rafinami1} are Rosenbrock strong linearizations of $G(\lam)$.

\begin{definition}[\cite{rafinami1}, Fiedler pencil] Let $\sigma $ be a permutation of $\{0: m-1\}.$ Then
	$\mathbb{L}_{\sigma}(\lambda):= \lambda \mathbb{M}^\mathcal{S}_{-m} - \mathbb{M}^\mathcal{S}_{\sigma} $ is called a  Fiedler pencil (FP) of $G(\lambda)$ associated with $\sigma.$  The pencil $\mathbb{L}_\sigma(\lambda)$ is also referred to as a Fiedler pencil of $\mathcal{S}(\lambda).$
\end{definition}

We now define the reverse consecution-inversion structure sequence of a permutation which we need in order to prove that a Fiedler  pencil is a Rosenbrock strong linearization of $G(\lam).$

\begin{definition} Let $\alpha$ be a permutation of $ \{0:m-1\}$. Then the tuple $\RCISS (\alpha) : =(c_1,i_1,c_2,i_2, \ldots, c_\ell, i_\ell)$ is called the reverse consecution-inversion structure sequence of $\alpha$ when  $\alpha$ has consecutions at $ m-c_1-1, m-c_1 ,\ldots ,m-2$; inversions at $m-c_1-i_1-1, m-c_1-i_1,\ldots,  m-c_1-2  $ and so on, consecutions at $i_\ell, i_\ell +1,  \ldots, i_\ell +c_\ell-1  $; inversions at $0, 1, \ldots,  i_\ell-1 $.
\end{definition}

\begin{remark} It is easy to see that  $\RCISS (\alpha) = rev \big(\CISS( rev(\alpha)) \big ),$ where   $\CISS(\alpha)$ is the consecution-inversion structure sequence of $\alpha$ defined in~\cite{tdm}.
\end{remark}

\begin{example} Let $ m=11$, and let $\alpha$ and $\beta$ be  permutations of $ \{0:10\}$ given by  $ \alpha = ( 8:10, 7,6,5, 2:4, 1,0)$ and $ \beta = (10,9,5:8, 3:4, 2,0:1)$. Then $ \RCISS (\alpha) = (2,4,2,2)$ since $\alpha$ has consecutions at $8,9$; inversions at $ 4,5,6,7$; consecutions at $2,3$; inversions at $0,1$. Similarly, we have $\RCISS (\beta) = (0,2,3,1,1,2,1,0)$.
\end{example}

Let $\alpha$ be a permutation of $ \{0:m-1\}$ with RCISS$(\alpha) = (c_1,i_1,c_2,i_2, \ldots, c_\ell, i_\ell)$. We define
\begin{equation} \label{mini}
 m_0:=0, ~n_0 : =0, \text{ and } m_p:=\sum\nolimits_{j=1}^{p} c_j  \text{ and } n_p:=\sum\nolimits_{j=1}^{p} i_j \text{ for } p=1:\ell.
\end{equation}

Observe that $m_\ell= c(\sigma)$  and $n_\ell= i(\sigma)$, that is, $m_\ell$ is the total number of  consecutions of $\alpha$ and $n_\ell$ is the total number of inversions of $\alpha$. Thus $m_\ell + n_\ell = m-1$. Further, we define
\begin{equation} \label{si}
s_0 :=0 \text{ and } s_p := \sum\nolimits_{j=1}^p (c_j + i_j) \text{ for } p=1:\ell.
\end{equation}  Observe that $s_\ell = m_\ell + n_\ell = m-1$.

For $i\geq 0$ and $j\geq 0$, we define $\widehat{\Lambda}_{i,j} (\lam),$ $ \Lambda_{i,j} (\lam),$ $  \widehat{\Omega}_{i,j} (\lam) ,$ and $\Omega_{i,j} (\lam)$ as follows:
\begin{equation} \label{eqn-bc}
\widehat{\Lambda}_{i,j} (\lam):=
\left[
\begin{matrix}
I_n\\
\lambda I_n\\
\lambda^2 I_n\\
\vdots\\		
\lambda^{i-1} I_n\\
0_{jn \times n}\\
\end{matrix}\right] \in \mathbb{C}[\lam]^{(i+j)n \times n},  ~
\Lambda_{i,j}(\lam) :=
\left[
\begin{matrix}
I_n\\
\lambda I_n\\
\lambda^2 I_n\\
\vdots\\		
\lambda^{i-1} I_n\\
0_{jn \times n}\\
\lam^{i} I_n
\end{matrix}
\right] \in \mathbb{C}[\lam]^{(i+1+j)n \times n} 	,	
\end{equation}
\begin{equation} \label{eqn-br}
\widehat{\Omega}_{i,j} (\lam) :=\left[ \begin{matrix}
0_{in \times n}\\
I_n\\
\lambda I_n\\
\lambda^2 I_n\\
\vdots\\		
\lambda^{j-1} I_n\\
\end{matrix}
\right] \in \mathbb{C}[\lam]^{(i+j)n \times n}
\text{ and }
\Omega_{i,j} (\lam) :=
\left[
\begin{matrix}
0_{in \times n}\\
I_n\\
\lambda I_n\\
\lambda^2 I_n\\
\vdots\\		
\lambda^{j-1} I_n\\
\lambda^{j} I_n\\
\end{matrix}
\right] \in \mathbb{C}[\lam]^{(i+j+1)n \times n} .
\end{equation}
Note that $
\Lambda_{i,j} (\lam) =
\left[
\begin{array}{@{}c@{}}
\widehat{\Lambda}_{i,j} (\lam)\\
\lam^i I_n
\end{array}\right]$
and
$\Omega_{i,j} (\lam)=
\left[
\begin{array}{@{}c@{}}
\widehat{\Omega}_{i,j} (\lam)\\
\lam^j I_n
\end{array}\right]$. Further, $
\Lambda_{0,j} (\lam) = \left[
\begin{array}{@{}c@{}}
0_{jn \times n}\\
I_n
\end{array}\right] $  and $ \Omega_{i,0} (\lam) =\left[
\begin{array}{@{}c@{}}
0_{in \times n}\\
I_n
\end{array}\right] $.

Let $\mathcal{H} := (\mathcal{H}_{ij})$ be a block $k \times \ell$ matrix, where each block $\mathcal{H}_{ij}$ is a $p \times q$ matrix. Then the {\em block transpose} of $\mathcal{H}$ is the block $\ell \times k$ matrix $\mathcal{H}^{\mathcal{B}}$ given by
$(\mathcal{H}^{\mathcal{B}})_{ij} = \mathcal{H}_{ji}$, see~\cite{tdm}.

\begin{remark} \label{buidng_blocks}  It follows from (\ref{eqn-bc}) and (\ref{eqn-br}) that $( e_k^T \otimes I_n) \widehat{\Lambda}_{i,j} (\lam) = 0 \iff  \widehat{\Omega}_{i,j} (\lam)^{\mathcal{B}} ( e_k \otimes I_n) \neq 0$  for any $i\geq 0$, $j \geq 0$ and $1 \leq k \leq i+j$.
\end{remark}

%

\begin{definition} \label{def_1stcol_1strow} Let $\alpha$ be a permutation of $ \{0:m-1\}$ with $ \RCISS (\alpha) = (c_1,i_1,c_2,i_2, \ldots, c_\ell, i_\ell)$. We define  $\Lambda_{\alpha} (\lam) \in \mathbb{C}[\lam]^{mn \times n}$ and $\Omega_{\alpha}  (\lam) \in \mathbb{C}[\lam]^{n \times mn}$  as follows:
	\begin{equation}\label{Esigmap}
	\Lambda_{\alpha} (\lam) : =
	\left[
	\begin{matrix}
	\widehat{\Lambda}_{c_1,i_1} (\lam)\\
	\lambda^{m_1}\widehat{\Lambda}_{c_2,i_2} (\lam)\\
	\vdots\\
	\lambda^{m_{\ell-2}}\widehat{\Lambda}_{c_{\ell - 1},i_{\ell-1}} (\lam)\\
	\lambda^{m_{\ell-1}} \Lambda_{c_\ell, i_\ell} (\lam)
	\end{matrix}
	\right]~ \text{if}  ~\ell > 1,
	\end{equation}
	and $	\Lambda_{\alpha} (\lam) :  =\Lambda_{c_1,i_1} (\lam)$ if $\ell=1,$
	\begin{equation}
	\Omega_{\alpha} (\lam) : =
	\left[
	\begin{matrix}
	\widehat{\Omega}_{c_1,i_1} (\lam) \\
	\lambda^{n_1} \widehat{\Omega}_{c_2,i_2} (\lam) \\
	\vdots\\
	\lambda^{n_{\ell-2}} \widehat{\Omega}_{c_{\ell-1}, i_{\ell-1}} (\lam) \\
	\lambda^{n_{\ell-1}} \Omega_{c_\ell, i_\ell} (\lam)
	\end{matrix}
	\right]^{\mathcal{B}}~ \text{if} ~\ell > 1,
	\end{equation}
	and $  \Omega_{\alpha} (\lam) := \big( \Omega_{c_1,i_1} (\lam) \big)^{\mathcal{B}} $ if $\ell=1.$
\end{definition}

\begin{remark} \label{special_form_lam} Let $\alpha$ be a permutation of $ \{0:m-1\}$ with $\RCISS (\alpha) = (c_1,i_1, \ldots, c_\ell, i_\ell)$. Since $	 \widehat{\Lambda}_{c_j,i_j} (\lam)$ and $\widehat{\Omega}_{c_j,i_j} (\lam)$ are the basic building blocks of $\Lambda_{\alpha} (\lam)$ and $\Omega_{\alpha} (\lam) $, respectively,  it follows from Remark~\ref{buidng_blocks} that $( e_k^T \otimes I_n) \Lambda_{\alpha} (\lam)  =0 \iff \Omega_{\alpha} (\lam) ( e_k \otimes I_n) \neq 0$  for any $ k \in \{1:m -1\}$. Further, note that
	$( e_m^T \otimes I_n) \Lambda_{\alpha} (\lam) = \lam^{m_\ell} I_n$ and $\Omega_{\alpha} (\lam) ( e_m \otimes I_n) = \lam^{n_\ell} I_n $.
\end{remark}

\begin{definition}[\cite{tdm}, Horner shift] \label{defhornershift}
	Let $ P(\lam) = \sum\nolimits_{i=0}^m \lam^{i}  A_i$. For $k = 0:m$, the matrix polynomial  $P_{k}(\lambda) := A_{m-k} + \lambda A_{m-k+1} + \cdots + \lambda^{k}A_{m}$ is called the  Horner shift of $P(\lambda)$ of degree $k$.
\end{definition}

 For $ 1 \leq i \leq m-1$, we consider the following $mn \times mn$ unimodular matrix polynomials \cite{tdm}
$$  Q_i(\lam) : =
\left[
\begin{array}{cccc}
I_{(i-1)n} & &&\\
& I_n & \lam I_n & \\
& 0_n & I_n &\\
&&& I_{(m-i-1)n}
\end{array}
\right]   $$ and
$$ R_i(\lam) : =
\left[
\begin{array}{cccc}
I_{(i-1)n} & &&\\
& 0_n &  I_n & \\
& I_n & P_i(\lam) &\\
&&& I_{(m-i-1)n}
\end{array}
\right] = R_i^{\mathcal{B}} (\lam). $$
Observe that $ R_i(\lam)$ depends on the Horner shifts of $P(\lam)$ whereas $ Q_i(\lam)$ does not.
For simplicity, we write $ Q_i$ and $R_i$ for $ Q_i (\lam)$ and $R_i (\lam)$, respectively.

 We need the following results in order to prove that Fiedler  pencils are Rosenbrock strong linearizations of $G(\lam).$


\begin{lemma} \label{lem_full_expre} Let $ P(\lam)$ be a matrix polynomial of degree $m$ and $\alpha$ be a permutation of $ \{0:m-1\}$. Suppose that $\RCISS (\alpha) = (c_1,i_1,c_2,i_2, \ldots, c_\ell, i_\ell)$. For $j =1:\ell,$  set
\begin{align}
 U_{(c_j,i_j)} : =  R^{\mathcal{B}}_{s_{j-1}+c_j+i_j} \cdots R^{\mathcal{B}}_{s_{j-1}+c_j+1} Q^{\mathcal{B}}_{s_{j-1}+c_j} \cdots Q^{\mathcal{B}}_{s_{j-1}+1} \label{uBij} \\
\text{ and }  V_{(c_j,i_j)} :=   R_{s_{j-1}+1} \cdots R_{s_{j-1}+c_j} Q_{s_{j-1}+c_j +1} \cdots Q_{s_{j-1}+c_j+i_j} . \label{uij}
\end{align}	
Let $U(\lam)$ and $V(\lam)$ be given by		
	$U(\lam) : = U_{(c_\ell,i_\ell)} U_{(c_{\ell -1},i_{\ell -1})} \cdots U_{(c_2,i_2)} U_{(c_1,i_1)} \text{ and } $
	$ V(\lam) : = V_{(c_1,i_1)}   V_{(c_2,i_2)}\cdots   V_{(c_{\ell -1},i_{\ell -1})} V_{(c_\ell,i_\ell)}. $
	Then $$U(\lam) (e_1 \otimes I_n) = \Lambda_\alpha (\lam) \mbox{ and } (e^T_1 \otimes I_n) V(\lam) = \Omega_\alpha (\lam),$$ where $\Lambda_\alpha (\lam)$ and $\Omega_\alpha (\lam)$ are as given in Definition~\ref{def_1stcol_1strow}.
\end{lemma}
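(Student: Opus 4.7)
The plan is to prove both identities by tracking, one factor at a time, how the product acts on the starting column $e_1 \otimes I_n$ (and dually on the row $e_1^T \otimes I_n$), exploiting the fact that every factor $Q_k$, $Q_k^{\mathcal{B}}$, $R_k$, $R_k^{\mathcal{B}}$ modifies only block-positions $k$ and $k+1$. Two computational rules will drive everything: (i) if $v$ is a block column with block $w$ at position $k$ and $0$ at position $k+1$, then $Q_k^{\mathcal{B}} v$ keeps $w$ at position $k$ and places $\lam w$ at position $k+1$; (ii) under the same hypothesis, $R_k v$ places $0$ at position $k$ and $w$ at position $k+1$, because the Horner-shift summand $P_k(\lam)\cdot 0$ vanishes.

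For the first identity, I would induct on $j = 1, \ldots, \ell$ with the invariant that the column $U_{(c_j, i_j)} \cdots U_{(c_1, i_1)}(e_1 \otimes I_n)$ coincides with $\Lambda_\alpha(\lam)$ in block rows $1, \ldots, s_j$ and carries a single terminal block $\lam^{m_j} I_n$ at block row $s_j + 1$, with zeros beyond. In the base case, the factors $Q^{\mathcal{B}}_1, \ldots, Q^{\mathcal{B}}_{c_1}$ build the chain $I_n, \lam I_n, \ldots, \lam^{c_1} I_n$ across positions $1, \ldots, c_1+1$ via rule~(i), and then $R_{c_1+1}, \ldots, R_{c_1+i_1}$ iteratively propagate the block $\lam^{c_1} I_n$ rightward to position $s_1+1$ via rule~(ii), depositing $i_1$ zero blocks in positions $c_1+1, \ldots, s_1$. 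The inductive step is identical in spirit: $U_{(c_j, i_j)}$ touches only block rows $s_{j-1}+1, \ldots, s_j+1$, so earlier chunks are preserved, and the same build-then-propagate procedure, starting from $\lam^{m_{j-1}} I_n$ at row $s_{j-1}+1$, produces the next chunk of $\Lambda_\alpha$. For $j = \ell$, the surviving terminal block $\lam^{m_\ell} I_n$ at position $s_\ell + 1 = m$ is precisely the tail distinguishing $\Lambda_{c_\ell, i_\ell}$ from $\widehat{\Lambda}_{c_\ell, i_\ell}$ in Definition~\ref{def_1stcol_1strow}, delivering $U(\lam)(e_1 \otimes I_n) = \Lambda_\alpha(\lam)$.

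For the second identity I would pass to block transposes via $(e_1^T \otimes I_n) V(\lam) = \bigl(V(\lam)^{\mathcal{B}} (e_1 \otimes I_n)\bigr)^{\mathcal{B}}$, using $R_k^{\mathcal{B}} = R_k$ and the product rule $(AB)^{\mathcal{B}} = B^{\mathcal{B}} A^{\mathcal{B}}$ to rewrite $V_{(c_j, i_j)}^{\mathcal{B}} = Q^{\mathcal{B}}_{s_{j-1}+c_j+i_j} \cdots Q^{\mathcal{B}}_{s_{j-1}+c_j+1} R_{s_{j-1}+c_j} \cdots R_{s_{j-1}+1}$, a ``mirrored'' version of $U_{(c_j, i_j)}$. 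Running the dual inductive trace on $V(\lam)^{\mathcal{B}}(e_1 \otimes I_n)$: within chunk $j$, the leading $R$-factors first propagate the single block $\lam^{n_{j-1}} I_n$ from position $s_{j-1}+1$ to position $s_{j-1}+c_j+1$ (depositing $c_j$ zero blocks), and then the trailing $Q^{\mathcal{B}}$-factors build the chain $\lam^{n_{j-1}} I_n, \lam^{n_{j-1}+1} I_n, \ldots, \lam^{n_j} I_n$ across positions $s_{j-1}+c_j+1, \ldots, s_j+1$. After $j = \ell$, the terminal $\lam^{n_\ell} I_n$ at position $m$ is exactly the tail promoting $\widehat{\Omega}_{c_\ell, i_\ell}$ to $\Omega_{c_\ell, i_\ell}$, and a final block transpose delivers $(e_1^T \otimes I_n) V(\lam) = \Omega_\alpha(\lam)$.

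The only real obstacle is disciplined bookkeeping: at each step one must verify that the pair of adjacent block rows currently being modified has the ``one nonzero block followed by one zero block'' shape required by rules~(i) and (ii), and that the rows written by chunk $j$ have not been populated by any earlier $U_{(c_p, i_p)}$ with $p < j$. Both requirements are immediate consequences of (\ref{uBij})--(\ref{uij}) and (\ref{mini})--(\ref{si}): each $U_{(c_j, i_j)}$ (and similarly $V_{(c_j, i_j)}^{\mathcal{B}}$) is supported on the block-index range $s_{j-1}+1$ to $s_j+1$, and within each chunk the $Q$- and $R$-factors are applied in strictly increasing order of their subscripts, so each factor always finds the required local pattern freshly intact.
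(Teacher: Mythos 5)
Your treatment of the first identity is correct and matches the paper's argument in spirit: both proceed by tracking $e_1\otimes I_n$ through the product one factor at a time and showing, chunk by chunk, that $U_{(c_j,i_j)}$ builds a $Q^{\mathcal B}$-powered chain and then $R^{\mathcal B}$-shuffles the terminal block forward. Your rules (i)--(ii) are precisely the paper's (\ref{Q_cole})--(\ref{R_cole}), and the bookkeeping on the block ranges $s_{j-1}+1,\ldots,s_j+1$ is exactly what the paper's iterated application of (\ref{eqnepe}) accomplishes.

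The second identity is where there is a genuine gap. You reduce the row computation to a column computation via $(e_1^T\otimes I_n)V(\lam)=\bigl(V(\lam)^{\mathcal B}(e_1\otimes I_n)\bigr)^{\mathcal B}$ and then invoke ``the product rule $(AB)^{\mathcal B}=B^{\mathcal B}A^{\mathcal B}$'' to factor $V^{\mathcal B}$. That rule is \emph{not} a general property of block transpose: one has $(AB)^{\mathcal B}_{ki}=\sum_j A_{ij}B_{jk}$ while $(B^{\mathcal B}A^{\mathcal B})_{ki}=\sum_j B_{jk}A_{ij}$, and these differ whenever some block $A_{ij}$ fails to commute with some $B_{jk}$. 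The $R_k$'s here carry the genuinely matrix-valued Horner shift $P_k(\lam)$ in the $(k+1,k+1)$ block, and distinct Horner shifts $P_a(\lam)$, $P_b(\lam)$ need not commute. For the specific product $V(\lam)$ the identity does happen to hold---because the $R$-factors occur with strictly increasing indices, so no multiplication path ever passes through two distinct Horner-shift blocks, and the $Q$-factors are block-scalar---but this is a structural fact that must be proved, not a rule one can cite. As written, your argument rests on a false general statement. The paper avoids the issue entirely: it never forms $V^{\mathcal B}$ as a matrix, but instead uses the row-action analogues of the $Q$- and $R$-rules (equations (\ref{Q_rowe}), (\ref{R_rowe})) to push $e_1^T\otimes I_n$ directly through the factors of $V$ from left to right. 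Your ``dual inductive trace'' is exactly that computation; you should run it directly on $(e_1^T\otimes I_n)V_{(c_1,i_1)}\cdots V_{(c_\ell,i_\ell)}$ rather than detouring through a block-transpose factorization.
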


We prove Lemma~\ref{lem_full_expre} in Appendix~\ref{appendix1}.

\begin{proposition} \label{prop_elimination} Let $  X (\lam) := [\mathbf{x}_1~ ~\mathbf{x}_2~ \cdots ~  \mathbf{x}_m]^\mathcal{B}$  and $ Y(\lam) := [\mathbf{y}_1 ~~\mathbf{y}_2~ \cdots ~  \mathbf{y}_m]$, where $\mathbf{x}_i = 0$ or $ \mathbf{x}_i = \lam^{p_i} I_n $, and $\mathbf{y}_i = 0$ or $\mathbf{y}_i = \lam^{q_i} I_n $, for some $p_i\geq 0 $ and $ q_i \geq 0 $, $i=1:m$. Suppose that $\mathbf{x}_i \mathbf{y}_i = 0$ for $i = 1:m-1$. Then there exist an $m\times m$ lower block-triangular matrix polynomial $L(\lam)$ with diagonal blocks $I_n$ and an $m\times m$ upper block-triangular matrix polynomial $U(\lam)$ with diagonal blocks $I_n$ such that
	\begin{equation} \label{gauss_el_type}
	L(\lam)
	\Big(
	\left[
	\begin{array}{c|c}
	I_{(m-1)n} & 0 \\
	\hline
	0 &0\\
	\end{array}
	\right]
	+ X(\lam)  Y (\lam)  \Big) U(\lam)  =
	\left[ \begin{array}{c|c}
	I_{(m-1)n} & 0 \\
	\hline
	0 & \mathbf{x}_m \mathbf{y}_m  \\
	\end{array}
	\right].
	\end{equation}
\end{proposition}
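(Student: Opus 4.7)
The plan is to perform an explicit block Gaussian elimination on
\[
M(\lam) := \left[\begin{array}{c|c} I_{(m-1)n} & 0 \\ \hline 0 & 0 \end{array}\right] + X(\lam) Y(\lam).
\]
Viewed as an $m \times m$ block matrix with $n \times n$ blocks, $M(\lam)$ has $(i,i)$-block $I_n$ for $i < m$, $(m,m)$-block $\mathbf{x}_m \mathbf{y}_m$, and $(i,j)$-block $\mathbf{x}_i \mathbf{y}_j$ for every $i \neq j$. The crucial preliminary observation is that each $\mathbf{x}_i$ and each $\mathbf{y}_i$ is either $0$ or a nonzero scalar multiple of $I_n$, so the hypothesis $\mathbf{x}_i \mathbf{y}_i = 0$ for $i \leq m-1$ forces one of the two factors to vanish and therefore also gives the symmetric identity $\mathbf{y}_i \mathbf{x}_i = 0$ for $i = 1:m-1$. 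This single identity will drive the whole argument.

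Writing $E_{ab}$ for the $m \times m$ matrix with a $1$ at position $(a,b)$ and zeros elsewhere, I would define the explicit elementary block factors
\[
U(\lam) := I_{mn} - \sum_{j=1}^{m-1} \sum_{\ell = j+1}^{m} E_{j\ell} \otimes \mathbf{x}_j \mathbf{y}_\ell, \qquad L(\lam) := I_{mn} - \sum_{j=1}^{m-1} \sum_{k = j+1}^{m} E_{kj} \otimes \mathbf{x}_k \mathbf{y}_j.
\]
By inspection, $L(\lam)$ and $U(\lam)$ are lower and upper block-triangular matrix polynomials with $I_n$ on the diagonal, exactly as the conclusion requires.

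The verification then splits into two parallel block-matrix computations. For $MU$, one observes that the only non-zero block of $M \cdot (E_{j\ell} \otimes \mathbf{x}_j \mathbf{y}_\ell)$ is the $(j,\ell)$-block, equal to $\mathbf{x}_j \mathbf{y}_\ell$; for $k \neq j$ the candidate contribution is $\mathbf{x}_k \mathbf{y}_j \mathbf{x}_j \mathbf{y}_\ell$, which vanishes by $\mathbf{y}_j \mathbf{x}_j = 0$. Summing over $j < \ell$, $j \leq m-1$, shows that $MU$ is exactly the block lower-triangular part of $M$. An entirely analogous left-multiplication by $L$ then cancels the strict lower-triangular blocks of $MU$, again by the same cancellation; the pivot block $\mathbf{x}_m \mathbf{y}_m$ at position $(m,m)$ is preserved because the row operations only use rows $j < m$ of $MU$, and the $(j,m)$-blocks of $MU$ were already cleared by the first pass.

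The one point that requires care is bookkeeping — confirming that neither elimination pass reintroduces non-zero blocks in positions already cleared. This is the main obstacle, but it is handled uniformly by the single identity $\mathbf{y}_j \mathbf{x}_j = 0$ for $j \leq m-1$, which collapses every cross-term that could otherwise propagate. Once this observation is in place, (\ref{gauss_el_type}) reduces to a direct block-matrix computation.
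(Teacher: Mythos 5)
Your proof is correct and takes essentially the same approach as the paper's: you use the same lower and upper block-triangular factors $L$ and $U$ (entries $-\mathbf{x}_i\mathbf{y}_j$), the same key observation that $\mathbf{x}_i\mathbf{y}_i = 0$ forces $\mathbf{y}_i\mathbf{x}_i = 0$ for $i\le m-1$, and the same block Gaussian elimination; you merely write the factors with explicit $E_{ab}$ notation and carry out the two elimination passes in more detail than the paper does.
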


We prove Proposition~\ref{prop_elimination} in Appendix~\ref{appendix2}. As an immediate corollary we have the following result.

\begin{corollary} \label{coro:biprt1t2} Let $\alpha$ be a permutation of $ \{0:m-1\}$ with RCISS$(\alpha) = (c_1,i_1,c_2,i_2, \ldots, c_\ell, i_\ell)$. Consider $\Lambda_\alpha (\lam)$ and $\Omega_\alpha (\lam)$ associated with RCISS$(\alpha)$ as given in Definition~\ref{def_1stcol_1strow}.	Then there exist an $m\times m$ lower block-triangular matrix polynomial $T_1(\lam)$ with diagonal blocks $I_n$ and an $m\times m$ upper block-triangular matrix polynomial $T_2(\lam)$ with diagonal blocks $I_n$  such that
	$$ T_1(\lam) \; \Big (   \left[ \begin{array}{@{}c|c@{}}
	I_{(m-1)n} & 0 \\
	\hline
	0 & 0 \\
	\end{array}
	\right] + \Lambda_\alpha (\lam)  \; \Omega_\alpha (\lam) \;  \Big ) \; T_2(\lam)  =
	\left[ \begin{array}{@{}c|c@{}}
	I_{(m-1)n} & 0 \\
	\hline
	0 & \lam^{m-1} I_n \\
	\end{array}
	\right].$$
\end{corollary}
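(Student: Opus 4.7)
The plan is to apply Proposition~\ref{prop_elimination} directly, taking $X(\lam) := \Lambda_{\alpha}(\lam)$ and $Y(\lam) := \Omega_{\alpha}(\lam)$; the task reduces to verifying the hypotheses of that proposition and identifying the resulting bottom-right block.

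First I would check that $\Lambda_\alpha(\lam)$ and $\Omega_\alpha(\lam)$ have the block pattern required by Proposition~\ref{prop_elimination}, namely that $\Lambda_\alpha(\lam) = [\mathbf{x}_1~\mathbf{x}_2~\cdots~\mathbf{x}_m]^{\mathcal{B}}$ and $\Omega_\alpha(\lam) = [\mathbf{y}_1~\mathbf{y}_2~\cdots~\mathbf{y}_m]$ with each $\mathbf{x}_i$ and each $\mathbf{y}_j$ equal to either $0$ or a monomial scalar multiple of $I_n$. Inspecting (\ref{eqn-bc})--(\ref{eqn-br}), each block row of $\widehat{\Lambda}_{c_j,i_j}(\lam)$ and $\Lambda_{c_\ell,i_\ell}(\lam)$ is either $0$ or $\lam^{p}I_n$, and multiplication by the scalar $\lam^{m_{j-1}}$ preserves this shape; the same holds for $\Omega_\alpha(\lam)$ by symmetry. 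So both matrices fit the template of Proposition~\ref{prop_elimination}.

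Next, I would verify the orthogonality condition $\mathbf{x}_k \mathbf{y}_k = 0$ for $k = 1,\ldots,m-1$. This is essentially the content of Remark~\ref{special_form_lam}: the equivalence $(e_k^T \otimes I_n)\Lambda_\alpha(\lam) = 0 \iff \Omega_\alpha(\lam)(e_k \otimes I_n) \neq 0$ says precisely that for each $k \leq m-1$ at least one of $\mathbf{x}_k,\mathbf{y}_k$ vanishes, so $\mathbf{x}_k \mathbf{y}_k = 0$. Then I would compute the surviving product $\mathbf{x}_m \mathbf{y}_m$: again by Remark~\ref{special_form_lam} we have $(e_m^T \otimes I_n)\Lambda_\alpha(\lam) = \lam^{m_\ell} I_n$ and $\Omega_\alpha(\lam)(e_m \otimes I_n) = \lam^{n_\ell} I_n$, and since $m_\ell + n_\ell = c(\alpha) + i(\alpha) = m-1$ by the remark following (\ref{mini}), we obtain $\mathbf{x}_m \mathbf{y}_m = \lam^{m-1} I_n$.

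With the hypotheses of Proposition~\ref{prop_elimination} verified, applying that proposition produces the desired lower and upper block-triangular matrix polynomials $T_1(\lam)$ and $T_2(\lam)$ (with diagonal blocks $I_n$) satisfying
$$T_1(\lam)\Bigl(\bigl[\begin{smallmatrix} I_{(m-1)n} & 0 \\ 0 & 0 \end{smallmatrix}\bigr] + \Lambda_\alpha(\lam)\Omega_\alpha(\lam)\Bigr)T_2(\lam) = \bigl[\begin{smallmatrix} I_{(m-1)n} & 0 \\ 0 & \mathbf{x}_m \mathbf{y}_m \end{smallmatrix}\bigr],$$
and inserting $\mathbf{x}_m \mathbf{y}_m = \lam^{m-1} I_n$ yields the conclusion. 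I do not foresee a genuine obstacle here—the combinatorial heavy lifting (that only the last block row of $\Lambda_\alpha$ and last block column of $\Omega_\alpha$ survive simultaneously, with their exponents summing to $m-1$) is already bundled into Remark~\ref{special_form_lam}, and the Gauss-elimination-type step has been abstracted out into Proposition~\ref{prop_elimination}. The only bookkeeping worth being careful about is the degree accounting $m_\ell + n_\ell = m - 1$, which is immediate from $\alpha$ being a permutation of $\{0:m-1\}$.
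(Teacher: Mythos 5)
Your proof is correct and follows exactly the same route as the paper's: invoke Remark~\ref{special_form_lam} to verify that $\Lambda_\alpha(\lam)$ and $\Omega_\alpha(\lam)$ satisfy the hypotheses of Proposition~\ref{prop_elimination}, note $m_\ell + n_\ell = m-1$ so the surviving corner block is $\lam^{m-1}I_n$, and conclude. You simply spell out more explicitly than the paper's terse two-sentence argument why the orthogonality condition $\mathbf{x}_k\mathbf{y}_k=0$ holds for $k<m$.
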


\begin{proof} Note that we have
	$( e_m^T \otimes I_n) \; \Lambda_{\alpha} (\lam)= \lam^{m_\ell} I_n$ and $\Omega_{\alpha}  (\lam)\; ( e_m \otimes I_n) = \lam^{n_\ell} I_n $  and  that $m_\ell + n_\ell = m-1$. By
Remark~\ref{special_form_lam}, it follows that   $\Lambda_\alpha (\lam)$ and $ \Omega_\alpha (\lam)$ satisfy the conditions of Proposition~\ref{prop_elimination}.    Hence the result follows from  Proposition~\ref{prop_elimination}.
\end{proof}

We now prove that Fiedler pencils are Rosenbrock strong linearizations of $G(\lam)$. For any index tuples $\alpha$ and $\beta$ containing indices from $\{0:m-1\}$, we write $\alpha \sim \beta$ if $\mathbb{M}^{\mathcal{S}}_\alpha = \mathbb{M}^{\mathcal{S}}_\beta$. Let $\sig$ be a permutation of $\{0:m-1\}$ with $c_0(\sig) >0$. Since $\mathbb{M}^\mathcal{S}_{i} \mathbb{M}^\mathcal{S}_{j} = \mathbb{M}^\mathcal{S}_{j} \mathbb{M}^\mathcal{S}_{i}$ for $ |i-j| > 1$, $i,j \in \{0:m-1\}$, we have  $ \sig \sim (\sig^L, 0, 1, \ldots, c_0(\sig))$, where $\sig^L$ is a sub-permutation of $\{0:m-1\} \setminus \{0,1,\ldots,c_0(\sig)\}$. Similarly, if $\sig$ is a permutation of $\{0:m-1\}$ with $i_0(\sig) >0$ then $ \sig \sim ( i_0(\sig), \ldots, 1, 0, \sig^R )$, where $\sig^R$ is a sub-permutation of $\{0:m-1\} \setminus \{0,1,\ldots,i_0(\sig)\}$. The reversal of a matrix polynomial $P(\lambda) := \sum_{j=0}^m \lambda^j A_j$ is defined by $rev P(\lam) := \sum \nolimits_{j=0}^{m} \lam^{j}A_{m-j}$.

\begin{theorem} \label{FP:Rosen:stro:lin} Let $\mathbb{L}_{\sigma}(\lambda):= \lambda \mathbb{M}^\mathcal{S}_{-m} - \mathbb{M}^\mathcal{S}_{\sigma} $ be the  Fiedler pencil of $G(\lambda)$ associated with a permutation $\sig$ of  $\{0: m-1\}$. Then  $\mathbb{L}_{\sigma}(\lambda)$ is a Rosenbrock strong linearization of $G(\lam)$. More precisely, we have the following.
	
	(a) There exist $mn\times mn$ unimodular matrix polynomials $U(\lam)$ and $V(\lam)$, and $r\times r$ nonsingular matrices $U_0$ and $V_0$  such that
	\be
	\left[
	\begin{array}{c|c}
		U(\lam) & 0 \\
		\hline
		0 & U_0 \\
	\end{array}
	\right] \mathbb{L}_\sig (\lam) \left[
	\begin{array}{c|c}
		V(\lam) & 0 \\
		\hline
		0 & V_0 \\
	\end{array}
	\right] = \left[
	\begin{array}{c|c}
		I_{(m-1)n} & 0 \\
		\hline
		0 &  \mathcal{S}(\lam) \\
	\end{array}
	\right] \, \mbox{ for all }\,  \lam \in \C. \nonumber \ee
	
(b)	There exist biproper rational  matrices $\mathcal{O}_{\ell} (\lam)  $ and $\mathcal{O}_r (\lam) $ such that
	\begin{equation} \label{eqv_inf_biproe}
	\mathcal{O}_{\ell} (\lam)  \, \lam^{-1} \mathbb{G} (\lam)  \, \mathcal{O}_r (\lam) =
	\left[ \begin{array}{c|c}
	I_{(m-1)n} & 0 \\
	\hline
	0 & \lam^{-m} G(\lam) \\
	\end{array}
	\right]   ,
	\end{equation}
	where $\mathbb{G}(\lam) = L_{\sig}(\lam) + (e_{m-i_0(\sig)} \otimes C) (\lam E -A)^{-1} (e^T_{m-c_0(\sig)} \otimes B)  $ is the transfer function of $ \mathbb{L}_{\sig}(\lam)$ and $	 L_{\sig}(\lam)  := \lam M^P_{-m} - M^P_{\sig}$ is the Fiedler pencil of $P(\lam)$ associated with $\sig.$
\end{theorem}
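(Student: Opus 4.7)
The plan is to reduce both parts of the theorem to statements about the matrix polynomial Fiedler pencil $L_\sigma(\lambda) := \lambda M^P_{-m} - M^P_\sigma$ together with careful tracking of how the standard reductions of $L_\sigma(\lambda)$ act on the off-diagonal entries of the bordered form of $\mathbb{L}_\sigma(\lambda)$. Since a Fiedler pencil is a GFPR with $\tau = (-m)$ and $\sigma_1 = \sigma_2 = \tau_1 = \tau_2 = \emptyset$, Theorem~\ref{gfprptoge_RCh4} immediately yields
\[
\mathbb{L}_\sigma(\lambda) = \left[\begin{array}{c|c} L_\sigma(\lambda) & e_{m-i_0(\sigma)} \otimes C \\[.1em] \hline \\[-1em] e^T_{m-c_0(\sigma)} \otimes B & A-\lambda E \end{array}\right],
\]
so I only need (i) a unimodular reduction of $L_\sigma(\lambda)$ to $\mathrm{diag}(I_{(m-1)n}, P(\lambda))$ and (ii) a biproper reduction of $\lambda^{-1}L_\sigma(\lambda)$ to $\mathrm{diag}(I_{(m-1)n}, \lambda^{-m}P(\lambda))$, each compatible with the off-diagonal blocks.

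For part (a), I take the unimodular matrix polynomials $U(\lambda) := U_{(c_\ell,i_\ell)} \cdots U_{(c_1,i_1)}$ and $V(\lambda) := V_{(c_1,i_1)} \cdots V_{(c_\ell,i_\ell)}$ of Lemma~\ref{lem_full_expre}, which are known (cf.\ \cite{tdm}) to give $U(\lambda) L_\sigma(\lambda) V(\lambda) = \mathrm{diag}(I_{(m-1)n}, P(\lambda))$. The extra ingredient needed is the pair of block identities
\[
U(\lambda)(e_{m-i_0(\sigma)} \otimes I_n) = e_m \otimes I_n, \qquad (e^T_{m-c_0(\sigma)} \otimes I_n) V(\lambda) = e^T_m \otimes I_n,
\]
which relocate the off-diagonal data of $\mathbb{L}_\sigma(\lambda)$ into the last block row and column. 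These follow directly from the structure of the factors $Q_i, R_i, Q_i^{\mathcal{B}}, R_i^{\mathcal{B}}$: because $i_0(\sigma) = i_\ell$ and $c_0(\sigma) = c_\ell$ arise from the last segment of the RCISS of $\sigma$, all non-terminal factors in $U$ and $V$ act as the identity on the distinguished basis vectors, while the innermost factor in the final segment $U_{(c_\ell,i_\ell)}$ (resp.\ $V_{(c_\ell,i_\ell)}$) is exactly the one that maps $e_{m-i_0(\sigma)}\otimes I_n$ (resp.\ $e^T_{m-c_0(\sigma)}\otimes I_n$) onto $e_m\otimes I_n$ (resp.\ $e^T_m\otimes I_n$). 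Taking $U_0 = V_0 = I_r$ and block-multiplying then gives $\mathrm{diag}(I_{(m-1)n}, \mathcal{S}(\lambda))$, proving (a).

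For part (b), I would pass to the variable $\mu := 1/\lambda$ and work with $\mu L_\sigma(1/\mu) = M^P_{-m} - \mu M^P_\sigma$, whose reversal interchanges the roles of consecutions and inversions. Applying Corollary~\ref{coro:biprt1t2} (or directly Proposition~\ref{prop_elimination}) to the reversed pencil produces block lower/upper triangular matrix polynomials in $\mu$ with identity diagonal blocks which, read back as matrices in $\lambda$, are biproper and reduce $\lambda^{-1} L_\sigma(\lambda)$ to $\mathrm{diag}(I_{(m-1)n}, \lambda^{-m} P(\lambda))$. Since the unimodular block identities from part (a) are in particular biproper, the same localization argument places the strictly-proper correction $\lambda^{-1}(e_{m-i_0(\sigma)}\otimes C)(\lambda E - A)^{-1}(e^T_{m-c_0(\sigma)}\otimes B)$ into the $(m,m)$ block, where it combines with $\lambda^{-m} P(\lambda)$ to give $\lambda^{-m} G(\lambda)$. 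The main obstacle is the end-to-end biproperness check: Corollary~\ref{coro:biprt1t2} and Proposition~\ref{prop_elimination} output polynomial factors, so after the $\mu\leftrightarrow\lambda$ substitution one must verify that the composite factors $\mathcal{O}_\ell, \mathcal{O}_r$ are proper with nonsingular limits at infinity. This reduces to inspecting the leading block coefficients of each factor and is the one step that requires genuine bookkeeping.
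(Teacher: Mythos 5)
Your reduction of the theorem to block-level calculations on the matrix polynomial pencil $L_\sigma(\lambda)$ is the right idea, but the execution has several gaps, the most serious of which is in part~(b).

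For part~(a), your proposed block identities $U(\lambda)(e_{m-i_0(\sigma)}\otimes I_n)=e_m\otimes I_n$ and $(e^T_{m-c_0(\sigma)}\otimes I_n)V(\lambda)=e^T_m\otimes I_n$ are true, but they do \emph{not} follow from Lemma~\ref{lem_full_expre}, which proves the unrelated identities $U(\lambda)(e_1\otimes I_n)=\Lambda_\sigma(\lambda)$ and $(e^T_1\otimes I_n)V(\lambda)=\Omega_\sigma(\lambda)$. You would need a separate argument, tracing the cascade of $R^{\mathcal B}$-factors in $U_{(c_\ell,i_\ell)}$ and checking that all lower segments fix the distinguished basis vector; moreover, the stated identification ``$c_0(\sigma)=c_\ell$'' is generally false (for any permutation of $\{0{:}m-1\}$ at most one of $c_0(\sigma), i_0(\sigma)$ is positive, so one of them is $0$ rather than $c_\ell$ or $i_\ell$), and the argument has to be split into cases accordingly. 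As for the approach itself, the paper does not prove~(a) directly --- it cites \cite[Theorem~4.13]{rafinami1} --- so a self-contained proof along your lines would be a genuine alternative, provided the block identities are actually established.

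The problem with part~(b) is more fundamental. You assign Corollary~\ref{coro:biprt1t2} (equivalently Proposition~\ref{prop_elimination}) the role of ``reducing the reversed pencil to $\operatorname{diag}(I_{(m-1)n},\lambda^{-m}P(\lambda))$'', but that is not what these results do: they perform a block Gaussian elimination that collapses a rank-one correction of the form $\operatorname{diag}(I_{(m-1)n},0)+\Lambda_\alpha\Omega_\alpha$ into the $(m,m)$ block. They cannot reduce a Fiedler pencil. The biproper reduction of $\lambda^{-1}L_\sigma(\lambda)$ has to come from the unimodular reduction (via \cite{tdm}) of a \emph{different} Fiedler pencil $L_\alpha(\lambda)$ of $-\operatorname{rev}P(\lambda)$, obtained from $-\operatorname{rev}L_\sigma(\lambda)$ by the constant conjugation with $J(M^P_{\delta_1})^{-1}$ and $(M^P_{\delta_2})^{-1}J$. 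Critically, under \emph{that} biproper transformation the off-diagonal data does \emph{not} land in the $(m,m)$ block: it becomes $\Lambda_\alpha(1/\lambda)\otimes C$ and $\Omega_\alpha(1/\lambda)\otimes B$, so the cross term is the full rank-one perturbation $\Lambda_\alpha(1/\lambda)\,\lambda^{-1}G_{sp}(\lambda)\,\Omega_\alpha(1/\lambda)$ supported on all of $\mathbb{C}^{mn\times mn}$. Eliminating it requires a further biproper block Gaussian elimination --- this is where Proposition~\ref{prop_elimination} actually enters, and the twist is that the strictly proper scalar $-\lambda^{-1}G_{sp}(\lambda)$ must be inserted into the off-diagonal blocks of $T_j(1/\lambda)$ to keep the result biproper (the paper's $\widehat T_j(1/\lambda)$). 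Your sketch skips this elimination entirely; the remark that the ``same localization argument'' places the strictly proper correction into the $(m,m)$ block does not hold because the unimodular matrices from part~(a) and the biproper matrices needed for part~(b) are not related by the substitution $\lambda\mapsto1/\lambda$ (the unimodular reduction of $L_\sigma(\lambda)$ does not become, under that substitution, a biproper reduction of $\lambda^{-1}L_\sigma(\lambda)=\operatorname{rev}L_\sigma(1/\lambda)$, which is a genuinely different pencil). The ``end-to-end biproperness check'' you flag as the main obstacle is comparatively routine; the missing elimination of the rank-one cross term is the real content of part~(b).
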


\begin{proof} Part (a) is proved in \cite[Theorem~4.13]{rafinami1}.  Hence we only prove (b).
	
By Theorem~\ref{gfprptoge_RCh4}, we have 	
$\mathbb{L}_{\sigma}(\lambda) =
	\left[
	\begin{array}{@{}c|c@{}}
	L_{\sig}(\lam) &  e_{m-i_0(\sigma)} \otimes C \\ \hline \\[-1em]
	e^T_{m-c_0( \sigma)} \otimes B  & A-\lam E \\
	\end{array}
	\right].$ Hence $\mathbb{G}(\lam)$ is the transfer function of $\mathbb{L}_{\sigma}(\lambda).$   Let $\sig$ be given by  $\sig = (\delta_1, 0 , \delta_2)$. Then we have $L_{\sig} (\lam) = \lam M^P_{-m} - M^P_{\sig} =  \lam M^P_{-m} - M^P_{\delta_1} M^P_0 M^P_{\delta_2}.  $ It is shown in the proof of \cite[Theorem~4.6]{tdm} that $- rev L_{\sig} (\lam)$ is strictly equivalent to $- rev P (\lam)$. More precisely,  $J (M^P_{\delta_1})^{-1} \big(- rev L_{\sig} (\lam) \big)  (M^P_{\delta_2})^{-1} J  = : L_{\alpha} (\lam) $ is a Fiedler pencil of $- rev P (\lam)$, where $ \alpha = \big ( m-rev(\delta_1), 0 , m-rev(\delta_2) \big)$ and
	$J := \left[\begin{array}{@{}ccc@{}}
	& & I_n\\
	& \rotatebox{45}{$\cdots$} &\\
	I_n & &
	\end{array} \right] \in \mathbb{C}^{mn \times mn}.$ Hence $L_{\alpha} (\lam) $ is a linearization of $- rev P (\lam)$. Thus there exist unimodular matrix polynomials $\widehat{U} (\lam)$ and   $\widehat{V} (\lam)$ such that
	$$  \widehat{U} (\lam) \,  L_{\alpha} (\lam) \,  \widehat{V} (\lam)  =
	\left[ \begin{array}{c|c}
	-I_{(m-1)n} & 0 \\
	\hline
	0 & - rev P(\lam)\\
	\end{array}
	\right]  , $$
	where $ \widehat{U} (\lam)$ and $ \widehat{V} (\lam)$ are given by \cite{tdm}
	\begin{equation} \label{U_cap}
	\widehat{U} (\lam): = U_0 U_1 \cdots U_{m-3} U_{m-2}, \text{ with } U_j = \left\{ \begin{array}{ll}
	Q^{\mathcal{B}}_{m-1-j} & \text{if } \alpha \text{ has a consecution at } j, \\
	R^{\mathcal{B}}_{m-1-j} & \text{if } \alpha \text{ has an inversion at } j,
	\end{array} \right.
	\end{equation}
	\begin{equation}\label{V_cap}
	\widehat{V} (\lam) := V_{m-2} V_{m-3} \cdots V_{1} V_{0}, \text{ with } V_j = \left\{ \begin{array}{ll}
	R_{m-1-j} & \text{if } \alpha \text{ has a consecution at } j, \\
	Q_{m-1-j} & \text{if } \alpha \text{ has an inversion at } j.
	\end{array} \right.
	\end{equation}
	Note that $R_i^{\text{'}}s$ in (\ref{U_cap}) and (\ref{V_cap}) are associated with  the matrix polynomial $- rev P(\lam)$.	Thus we have	
{\small \begin{align}
& \left[ \begin{array}{@{}c@{}|c@{}}
-I_{(m-1)n} & 0 \\
\hline
0 & - rev P(\lam)\\
\end{array}
\right]  = 	\widehat{U} (\lam)  L_{\alpha} (\lam)  \widehat{V} (\lam) = \widehat{U} (\lam)    J (M^P_{\delta_1})^{-1} \big(- rev L_{\sig} (\lam) \big)  (M^P_{\delta_2})^{-1} J    \widehat{V} (\lam) \nonumber \\
& \Rightarrow
\left[ \begin{array}{@{}c@{}|c@{}}
I_{(m-1)n} & 0 \\
\hline
0 & \lam^{-m} P(\lam)\\
\end{array}
\right] =  \widehat{U} (1/\lam) \,  J (M^P_{\delta_1})^{-1} \Big(\lam^{-1} L_{\sig} (\lam) \Big)  (M^P_{\delta_2})^{-1} J  \,  \widehat{V} (1/\lam) . \label{eqn:1stpart}
\end{align}}

Next we evaluate
	\begin{equation} \label{eqn:ujdel}
	\widehat{U} (\lam)  J (M^P_{\delta_1})^{-1} (e_{m-i_0(\sig)} \otimes I_n) \text{ and } (e^T_{m-c_0(\sig)} \otimes I_n)  (M^P_{\delta_2})^{-1} J \widehat{V} (\lam).
	\end{equation}
Recall that $ M^P_{-t}  = \left[
	\begin{array}{@{}cccc@{}}
	I_{(m-t-1)n} &  & & \\
	& 0 & I_n &\\
	& I_n & A_{t} &\\
	
	& & & I_{(t-1)n} \end{array}
\right]$ for $t=1:m-1$. Hence we have
\begin{equation}\label{-veBrow}
(e^T_{m-q} \otimes I_n) M^P_{-t}=
\left\{
\begin{array}{ll}
e^T_{m-(q-1)} \otimes I_n & \mbox{for}~t= q~ \mbox{and}~ q=1:m-1, \\
e^T_{m-q} \otimes I_n &    \mbox{for} ~t \notin \{q,q+1\},~ q=0:m-1,
\end{array}
\right.
\end{equation} and
\begin{equation}\label{-veBcol}
M^P_{-t} (e_{m-q} \otimes I_n) =
\left\{
\begin{array}{ll}
e_{m-(q-1)} \otimes I_n & \mbox{for}~t = q~ \mbox{and}~ q=1:m-1, \\
e_{m-q} \otimes I_n & \mbox{for}~t \notin \{q,q+1\},~ q=0:m-1.
\end{array}
\right.
\end{equation}

	Case-I: Suppose that $c_0(\sig) >0.$ Then $ i_0(\sig) = 0.$ Since $\sig $ has $c_0(\sig)$ consecutions at $0$, we have  $ \sig \sim (\sig^L, 0, 1,2, \ldots, c_0(\sig))$. Without loss of generality, we assume that  $ \sig = (\delta_1 , 0, \delta_2) = ( \delta_1, 0, 1,2, \ldots, c_0(\sig))$, that is, $\delta_2 = (1,2, \ldots, c_0(\sig)). $  Then by repeated application of (\ref{-veBrow}) we have
	$$ (e^T_{m-c_0(\sig)} \otimes I_n)  (M^P_{\delta_2})^{-1} = (e^T_{m-c_0(\sig)} \otimes I_n)  M^P_{-c_0(\sig)} M^P_{-(c_0(\sig)-1)} \cdots M^P_{-2} M^P_{-1}  = e^T_{m} \otimes I_n .$$  Hence $  (e^T_{m-c_0(\sig)} \otimes I_n)  (M^P_{\delta_2})^{-1} J  = (e^T_{m} \otimes I_n)  J =  e^T_{1} \otimes I_n.$
	
	Further, since $i_0(\sig) = 0$ and $0,1 \notin \delta_1$, by (\ref{-veBcol}) we have $ (M^P_{\delta_1})^{-1} (e_{m-i_0(\sig)} \otimes I_n) =  (M^P_{\delta_1})^{-1} (e_{m} \otimes I_n) =   e_{m} \otimes I_n.$ Hence $J (M^P_{\delta_1})^{-1} ( e_{m-i_0(\sig)} \otimes I_n) = J ( e_{m} \otimes I_n) = e_{1} \otimes I_n $.
	
	Case-II: Suppose that $i_0(\sig) >0.$ Then $ c_0(\sig) = 0.$ Since $\sig $ has $i_0(\sig)$ inversions at $0$, we have $ \sig \sim ( i_0(\sig), \ldots,  2,1, 0, \sig^R )$. Without loss of generality, we assume that $ \sig  = (\delta_1 , 0, \delta_2) =  ( i_0(\sig), \ldots,  2,1, 0, \delta_2 )$, that is, $\delta_1 = (i_0(\sig), \ldots ,2, 1). $ Then by repeated application of (\ref{-veBcol}) we have
	$$ (M^P_{\delta_1})^{-1} (e_{m-i_0(\sig)} \otimes I_n) = M^P_{-1} M^P_{-2}   \cdots M^P_{-(i_0(\sig)-1)}  M^P_{-i_0(\sig)} (e_{m-i_0(\sig)} \otimes I_n)  = e_{m} \otimes I_n.$$  Hence we have  $ J (M^P_{\delta_1})^{-1} (e_{m-i_0(\sig)} \otimes I_n) = J(  e_{m} \otimes I_n) =  e_{1} \otimes I_n . $
	
	Further, since $c_0(\sig) = 0$ and $0,1 \notin \delta_2$, by (\ref{-veBrow}) we have $ (e^T_{m-c_0(\sig)} \otimes I_n)  (M^P_{\delta_2})^{-1}  =  (e^T_{m} \otimes I_n) (M^P_{\delta_2})^{-1} =  e^T_{m} \otimes I_n $. Hence $ (e^T_{m-c_0(\sig)} \otimes I_n) (M^P_{\delta_2})^{-1} J =  (e^T_{m} \otimes I_n) J =  e^T_{1} \otimes I_n $.
	
	Thus in both the cases, we have
	\begin{equation} \label{eqn_jdel}
	\begin{array}{l}
	\widehat{U} (\lam) J (M^P_{\delta_1})^{-1} (e_{m-i_0(\sig)} \otimes I_n) = \widehat{U} (\lam)(e_{1} \otimes I_n) \\ [.5em]
	(e^T_{m-c_0(\sig)} \otimes I_n)  (M^P_{\delta_2})^{-1} J \widehat{V} (\lam) = (e^T_{1} \otimes I_n) \widehat{V} (\lam).
	\end{array}
	\end{equation}
	
Next, we calculate $\widehat{U} (\lam)(e_{1} \otimes I_n)$ and $(e^T_{1} \otimes I_n) \widehat{V} (\lam)$. Note that $\alpha$ is a permutation of $\{0:m-1\}$. Let $\RCISS (\alpha)$ be given by  $\RCISS (\alpha)=(c_1,i_1,c_2,i_2, \ldots ,c_\ell,i_\ell)$. Recall from (\ref{mini}) and (\ref{si}) the definitions of $m_j,n_j$ and $s_j$, for $j= 0:\ell$, associated with $\RCISS (\alpha)$. By (\ref{U_cap}) and (\ref{V_cap}), we have
	$$\widehat{U}(\lam)  = \widehat{U}_{(c_\ell,i_\ell)} \widehat{U}_{(c_{\ell -1},i_{\ell -1})} \cdots \widehat{U}_{(c_2,i_2)} \widehat{U}_{(c_1,i_1)} \text{ and } $$
	$$ \widehat{V}(\lam)  = \widehat{V}_{(c_1,i_1)}   \widehat{V}_{(c_2,i_2)}\cdots   \widehat{V}_{(c_{\ell -1},i_{\ell -1})} \widehat{V}_{(c_\ell,i_\ell)}, $$
	where   $ \widehat{U}_{(c_j,i_j)} =  R^{\mathcal{B}}_{s_{j-1}+c_j+i_j} \cdots R^{\mathcal{B}}_{s_{j-1}+c_j+1} Q^{\mathcal{B}}_{s_{j-1}+c_j} \cdots Q^{\mathcal{B}}_{s_{j-1}+1} \text{ and } $
	$$  \widehat{V}_{(c_j,i_j)} =   R_{s_{j-1}+1} \cdots R_{s_{j-1}+c_j} Q_{s_{j-1}+c_j +1} \cdots Q_{s_{j-1}+c_j+i_j} .$$
	Hence by Lemma~\ref{lem_full_expre}, we have
	$\widehat{U}(\lam) (e_1 \otimes I_n) = \Lambda_\alpha (\lam)$ and $ (e^T_1 \otimes I_n) \widehat{V}(\lam) = \Omega_\alpha (\lam)$, where $\Lambda_\alpha (\lam)$ and $\Omega_\alpha (\lam)$ are as given in Definition~\ref{def_1stcol_1strow}. Now  by (\ref{eqn_jdel}) we have
	\begin{eqnarray}  \label{eqn:UV}
	\widehat{U} (\lam) J (M^P_{\delta_1})^{-1} (e_{m-i_0(\sig)} \otimes I_n) = \Lambda_\alpha (\lam) \text{ and } (e^T_{m-c_0(\sig)} \otimes I_n)  (M^P_{\delta_2})^{-1} J \widehat{V} (\lam) = \Omega_\alpha (\lam). \nonumber \\
	\end{eqnarray}

 Define  $ \widehat{\mathcal{O}}_{\ell} (\lam) : = \widehat{U} (1/\lam) J (M^P_{\delta_1})^{-1}  $ and $\widehat{\mathcal{O}}_r (\lam)  : =  (M^P_{\delta_2})^{-1} J \widehat{V} (1/\lam)$. Since $\widehat{U} (\lam) $ and $\widehat{V} (\lam) $ are unimodular, $ \widehat{U} (1/\lam)$ and  $\widehat{V} (1/\lam)$ are biproper. Hence it follows that $\widehat{\mathcal{O}}_{\ell} (\lam) $ and $\widehat{\mathcal{O}}_r (\lam)$ are biproper matrices.  Set $ G_{sp}(\lam) := C(\lam E -A)^{-1}B$. Then we have
\begin{align}
	 & \widehat{\mathcal{O}}_{\ell}  (\lam) \lam^{-1}  \mathbb{G} (\lam) \widehat{\mathcal{O}}_{r} (\lam)  \nonumber\\
	& = \widehat{\mathcal{O}}_{\ell}  (\lam) \lam^{-1}  L_\sig (\lam) \widehat{\mathcal{O}}_{r} (\lam) + \widehat{\mathcal{O}}_{\ell}  (\lam) \Big ( (e_{m-i_0(\sig)} \otimes I_n) \lam^{-1} G_{sp}(\lam) (e^T_{m-c_0(\sig)} \otimes I_n) \Big) \widehat{\mathcal{O}}_{r} (\lam)  \nonumber \\
	& =\left[ \begin{array}{@{}c|c@{}}
	I_{(m-1)n} & 0 \\
	\hline
	0 & \lam^{-m} P(\lam) \\
	\end{array}
	\right]  +   \Lam_{\alpha} (1/\lam) \;  \lam^{-1} G_{sp}(\lam)  \; \Omega_{\alpha} (1/\lam) \nonumber \\
	& =\left[ \begin{array}{@{}c|c@{}}
	0_{(m-1)n} & 0 \\
	\hline
	0 & \lam^{-m} P(\lam) \\
	\end{array}
	\right]  + \underbrace{\left[ \begin{array}{@{}c|c@{}}
		I_{(m-1)n} & 0 \\
		\hline
		0 & 0 \\
		\end{array}
		\right]  + \Lam_{\alpha} (1/\lam) \;  \lam^{-1} G_{sp}(\lam)  \;  \Omega_{\alpha} (1/\lam) }_{W(\lam)}, \label{eqn_mainthm}\\
	 & \text{where the second equality holds by } (\ref{eqn:1stpart}) \text{ and }  (\ref{eqn:UV}). \nonumber
	\end{align}
	
Let $T_1(\lam)$ and $T_2(\lam)$ be the matrix polynomials given in Corollary~\ref{coro:biprt1t2}. Since $T_1(\lam)$ and $T_2(\lam)$ are block upper triangular with diagonal blocks $I_n$,  $T_1(1/\lam)$ and $T_2(1/\lam)$ are biproper rational matrices.
Let  $\widehat{T}_j(1/\lam)$, $j=1,2,$ denote the matrix obtained by multiplying each off diagonal block of $T_j(1/\lam)$  by $-\lam^{-1} G_{sp}(\lam)$. Then obviously $\widehat{T}_j(1/\lam)$ is biproper for $j=1,2.$
Now by Corollary~\ref{coro:biprt1t2} we have
	\begin{eqnarray}
	&&\widehat{T}_1(1/\lam) \;W(\lam) \; \widehat{T}_2(1/\lam)  =
	\left[ \begin{array}{@{}c|c@{}}
	I_{(m-1)n} & 0 \\
	\hline
	0 & \lam^{-(m-1)} \lam^{-1} G_{sp}(\lam)\\
	\end{array}
	\right] =
	\left[ \begin{array}{@{}c|c@{}}
	I_{(m-1)n} & 0 \\
	\hline
	0 & \lam^{-m} G_{sp}(\lam)\\
	\end{array}
	\right]. \nonumber\\
	\label{eqn:2ndpart}
	\end{eqnarray}
Hence by defining $\mathcal{O}_{\ell} (\lam) : = \widehat{T}_1(1/\lam) \widehat{\mathcal{O}}_{\ell} (\lam) $ and $ \mathcal{O}_{r} (\lam) : = \widehat{\mathcal{O}}_r (\lam)  \widehat{T}_2(1/\lam) $, the equality in (\ref{eqv_inf_biproe}) follows from (\ref{eqn_mainthm}) and (\ref{eqn:2ndpart}).  This completes the proof of (b).
\end{proof}

Next, we show that  GFPRs of $G(\lam)$ are Rosenbrock strong linearizations of  $G(\lam).$ We need the following result.

\begin{proposition} \label{prop:gfprstlin}  Let $ \mathbb{T} (\lam) : =\lambda \mathbb{M}^\mathcal{S}_{-m}  -  \mathbb{M}^\mathcal{S}_{\alpha}$ be the Fiedler pencil of $G(\lam)$ associated with a permutation $\alpha$ of $\{ 0:m-1\}$.   Let $\mathbb{L} (\lam)$ be a pencil given by $\mathbb{L} (\lam) : = \diag (\mathcal{X}, X_0)   \mathbb{T}(\lam)  \diag (\mathcal{Y}, Y_0)  $, where $\mathcal{X}, \mathcal{Y} \in \mathbb{C}^{mn \times mn}$ and  $X_0, Y_0 \in \mathbb{C}^{r \times r}$ are nonsingular matrices. Then  $\mathbb{L} (\lam)$ is a Rosenbrock strong linearization of $G(\lam)$.
\end{proposition}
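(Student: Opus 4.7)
\medskip

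\noindent\textbf{Proof plan for Proposition~\ref{prop:gfprstlin}.} The strategy is to show that each of the three ingredients of a Rosenbrock strong linearization (irreducibility, the unimodular equivalence in Definition~\ref{stln}(a), and the biproper equivalence of transfer functions in Definition~\ref{stln}(b)) is preserved under left and right multiplication by block-diagonal \emph{constant} nonsingular matrices of the form $\mathrm{diag}(\mathcal{X}, X_0)$ and $\mathrm{diag}(\mathcal{Y}, Y_0)$. Since $\mathbb{T}(\lam)$ has already been proven to be a Rosenbrock strong linearization of $G(\lam)$ in Theorem~\ref{FP:Rosen:stro:lin}, the conclusion for $\mathbb{L}(\lam)$ will follow by conjugating all three data with these constant transformations. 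No new computation about Fiedler pencils is required; the argument is purely structural.

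\medskip

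\noindent\textbf{Step 1 (Form and irreducibility).} First I would record from Theorem~\ref{gfprptoge_RCh4} that
$\mathbb{T}(\lam) = \left[\begin{smallmatrix} L_\alpha(\lam) & e_{m-i_0(\alpha)}\otimes C \\ e_{m-c_0(\alpha)}^T\otimes B & A-\lam E\end{smallmatrix}\right]$, and then compute
$$\mathbb{L}(\lam) = \left[\begin{array}{c|c} \mathcal{X} L_\alpha(\lam)\mathcal{Y} & \mathcal{X}(e_{m-i_0(\alpha)}\otimes C)Y_0 \\ \hline X_0(e_{m-c_0(\alpha)}^T\otimes B)\mathcal{Y} & X_0(A-\lam E)Y_0\end{array}\right],$$
so $\mathbb{L}(\lam)$ has the form~(\ref{lbb}) with $H = X_0 A Y_0$, $K = X_0 E Y_0$, and $K$ nonsingular. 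Irreducibility of $\mathbb{L}(\lam)$ then reduces to the two rank conditions on the block rows/columns of $\mathbb{T}(\lam)$, since factoring out $\mathrm{diag}(\mathcal{X}, X_0)$ on the left or $\mathrm{diag}(\mathcal{Y}, Y_0)$ on the right preserves rank.

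\medskip

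\noindent\textbf{Step 2 (Condition (a) of Definition~\ref{stln}).} By Theorem~\ref{FP:Rosen:stro:lin}(a), there exist unimodular $U(\lam), V(\lam)$ and nonsingular $U_0, V_0$ with $\mathrm{diag}(U,U_0)\,\mathbb{T}(\lam)\,\mathrm{diag}(V,V_0) = \mathrm{diag}(I_{(m-1)n},\mathcal{S}(\lam))$. I would then define
$$\widetilde{U}(\lam) := U(\lam)\mathcal{X}^{-1},\quad \widetilde{V}(\lam) := \mathcal{Y}^{-1} V(\lam),\quad \widetilde{U}_0 := U_0 X_0^{-1},\quad \widetilde{V}_0 := Y_0^{-1} V_0,$$
which are unimodular (resp.\ nonsingular) since they are products of unimodular (resp.\ nonsingular) factors with constant invertible matrices. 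Substituting $\mathbb{L}(\lam) = \mathrm{diag}(\mathcal{X},X_0)\,\mathbb{T}(\lam)\,\mathrm{diag}(\mathcal{Y},Y_0)$ gives the required equivalence with the same right-hand side $\mathrm{diag}(I_{(m-1)n},\mathcal{S}(\lam))$.

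\medskip

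\noindent\textbf{Step 3 (Condition (b) of Definition~\ref{stln}).} A direct Schur-complement computation shows that the transfer function of $\mathbb{L}(\lam)$ is
$$\mathbb{G}_\mathbb{L}(\lam) = \mathcal{X}\,\mathbb{G}_\mathbb{T}(\lam)\,\mathcal{Y},$$
because the $X_0, Y_0$ factors on the bottom-right block cancel against themselves in the $(\lam K - H)^{-1}$ resolvent, and the $\mathcal{X}, \mathcal{Y}$ factors pass through. Using the biproper equivalence from Theorem~\ref{FP:Rosen:stro:lin}(b) for $\mathbb{T}(\lam)$ and setting $\mathcal{O}_\ell^\mathbb{L}(\lam) := \mathcal{O}_\ell^\mathbb{T}(\lam)\mathcal{X}^{-1}$ and $\mathcal{O}_r^\mathbb{L}(\lam) := \mathcal{Y}^{-1}\mathcal{O}_r^\mathbb{T}(\lam)$, which remain biproper as products of biproper matrices with constant invertible ones, yields the required normal form $\mathrm{diag}(I_{(m-1)n},\lam^{-m} G(\lam))$.

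\medskip

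The plan has no real obstacle; the only place that requires a small computation is the identification $\mathbb{G}_\mathbb{L}(\lam) = \mathcal{X}\,\mathbb{G}_\mathbb{T}(\lam)\,\mathcal{Y}$ in Step~3, and I would verify it by expanding
$\mathcal{X}(e_{m-i_0(\alpha)}\otimes C)Y_0\bigl[X_0(\lam E - A)Y_0\bigr]^{-1}X_0(e_{m-c_0(\alpha)}^T\otimes B)\mathcal{Y}$ and cancelling $Y_0 Y_0^{-1}$ and $X_0^{-1} X_0$.
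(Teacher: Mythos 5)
Your proposal is correct and follows essentially the same route as the paper's proof: both reduce the claim to Theorem~\ref{FP:Rosen:stro:lin} for the underlying Fiedler pencil $\mathbb{T}(\lam)$, absorb the constant factors $\diag(\mathcal{X},X_0)$, $\diag(\mathcal{Y},Y_0)$ into the unimodular equivalence of Definition~\ref{stln}(a) and the biproper equivalence of Definition~\ref{stln}(b), and use the cancellation $Y_0Y_0^{-1}$, $X_0^{-1}X_0$ to get $\mathbb{G}_{\mathbb{L}}(\lam)=\mathcal{X}\,\mathbb{G}_{\mathbb{T}}(\lam)\,\mathcal{Y}$. The only small difference is that you explicitly flag preservation of irreducibility in Step 1, which the paper leaves implicit.
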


\begin{proof} Since $\mathbb{T}(\lam)$ is a Fiedler pencil of $G(\lam)$, by Theorem~\ref{FP:Rosen:stro:lin}, $\mathbb{T}(\lam)$ is a Rosenbrock strong linearization of $G(\lam)$. Hence there exist $mn\times mn$ unimodular matrix polynomials $U(\lam)$ and $V(\lam)$, and $r\times r$ nonsingular matrices $U_0$ and $V_0$  such that
	\begin{align}
	  \diag \big( I_{(m-1)n} , \, \mathcal{S}(\lam) \big) &=\diag \big( U(\lam),\,  U_0 \big) \, \mathbb{T}(\lam) \, \diag \big( V(\lam),  \, V_0 \big)   \nonumber\\
	 &= \diag \big(U(\lam) \mathcal{X}^{-1}, \, U_0 X^{-1}_0 \big)  \, \mathbb{L}(\lam)  \, \diag \big( \mathcal{Y}^{-1} V(\lam), \, Y^{-1}_0 V_0 \big) . \label{ts_st_lin_1}
	\end{align}
	
By Theorem~\ref{gfprptoge_RCh4}, we have
	 $\mathbb{T}(\lambda)  =
	\left[
	\begin{array}{@{}c|c@{}}
		L(\lam) &  e_{m-i_0(\alpha)} \otimes C \\[.1em] \hline \\[-1em]
		e^T_{m-c_0( \alpha)} \otimes B  & A-\lam E \\
	\end{array}
	\right],$ where $L(\lam) = \lambda M^P_{-m}  - M^P_{\alpha}$ is the Fiedler pencil of $P(\lam)$ associated with $\alpha.$ Then $$\mathbb{L} (\lam) =
	\left[
	\begin{array}{@{}c|c@{}}
	\mathcal{X} L (\lam) \mathcal{Y} &  \mathcal{X}( e_{m-i_0(\alpha)} \otimes C)Y_0 \\[.1em] \hline \\[-1em]
	X_0(e^T_{m-c_0( \alpha)} \otimes B) \mathcal{Y}  & X_0(A-\lam E)Y_0 \\
	\end{array}
	\right] $$ and $\mathbb{G}_{\mathbb{L}}(\lam) : = \mathcal{X} L (\lam) \mathcal{Y} + \mathcal{X}( e_{m-i_0(\alpha)} \otimes C)  (\lam E - A)^{-1} 	(e^T_{m-c_0( \alpha)} \otimes B) \mathcal{Y} $ is the transfer function of $\mathbb{L}(\lam)$. Since $\mathbb{T}(\lam)$ is a Rosenbrock strong linearization of $G(\lam)$, there exist biproper rational  matrices $\mathcal{O}_{\ell} (\lam)  $ and $\mathcal{O}_r (\lam) $ such that
	\begin{equation}  \label{stlinT}
	\mathcal{O}_{\ell} (\lam)  \, \lam^{-1} \mathbb{G}_{\mathbb{T}} (\lam)  \, \mathcal{O}_r (\lam) =
	\left[ \begin{array}{c|c}
	I_{(m-1)n} & 0 \\
	\hline
	0 & \lam^{-m} G(\lam) \\
	\end{array}
	\right]   ,
	\end{equation}
	where $\mathbb{G}_{\mathbb{T}} (\lam) = L(\lam) + (e_{m-i_0(\alpha)} \otimes C) (\lam E -A)^{-1} (e^T_{m-c_0(\alpha)} \otimes B)  $ is the transfer function of $ \mathbb{T}(\lam)$. Since $\mathcal{X}^{-1} \mathbb{G}_{\mathbb{L}} (\lam) \mathcal{Y}^{-1} =  \mathbb{G}_{\mathbb{T}} (\lam)  $, it follows from (\ref{stlinT}) that
	\begin{eqnarray} \label{ts_st_lin_2}
	\mathcal{O}_{\ell} (\lam)  \mathcal{X}^{-1} \lam^{-1} \mathbb{G}_{\mathbb{L}} (\lam) \mathcal{Y}^{-1}\mathcal{O}_{r} (\lam) = 	\left[ \begin{array}{c|c}
	I_{(m-1)n} & 0 \\
	\hline
	0 & \lam^{-m} G(\lam) \\
	\end{array}
	\right] .
	\end{eqnarray}
Note that 	$\mathcal{O}_{\ell} (\lam)  \mathcal{X}^{-1}$ and $\mathcal{Y}^{-1}\mathcal{O}_{r} (\lam) $ are biproper rational matrices. Hence it follows from (\ref{ts_st_lin_1}) and (\ref{ts_st_lin_2}) that $\mathbb{L}(\lam)$  is a Rosenbrock strong linearization of $G(\lam)$. \end{proof}

Now we prove that the GFPRs of $G(\lam)$ are Rosenbrock strong linearizations.

\begin{theorem} \label{thm:gfpr:RSL} Let $\mathbb{L}(\lambda) :=  \mathbb{M}_{\tau_1} (Y_1) \, \mathbb{M}_{\sigma_1} (X_1) (\lambda \mathbb{M}^\mathcal{S}_{\tau}  -  \mathbb{M}^\mathcal{S}_{\sigma}) \mathbb{M}_{\sigma_2}(X_2) \, \mathbb{M}_{\tau_2} (Y_2)$ be a GFPR of $G(\lambda)$  as given in Definition~\ref{p02gfprglam2m181a_RCh4}, where all the matrix assignments $X_j$ and $Y_j$, $j=1,2,$ are nonsingular. Then $\mathbb{L}(\lambda)$ is a Rosenbrock strong linearization of $G(\lam)$.
\end{theorem}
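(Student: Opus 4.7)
The strategy is to reduce $\mathbb{L}(\lambda)$ to the form $\diag(\mathcal{X}, I_r) \, (\lambda \mathbb{M}^{\mathcal{S}}_{-m} - \mathbb{M}^{\mathcal{S}}_\alpha) \, \diag(\mathcal{Y}, I_r)$ for some permutation $\alpha$ of $\{0:m-1\}$ and nonsingular matrices $\mathcal{X}, \mathcal{Y} \in \mathbb{C}^{mn \times mn}$, after which Proposition~\ref{prop:gfprstlin} (applied with $X_0 = Y_0 = I_r$) immediately yields the conclusion. The four outer elementary matrices $\mathbb{M}_{\tau_1}(Y_1), \mathbb{M}_{\sigma_1}(X_1), \mathbb{M}_{\sigma_2}(X_2), \mathbb{M}_{\tau_2}(Y_2)$ are all block-diagonal of the form $\diag(\cdot, I_r)$, and since the matrix assignments $X_1, X_2, Y_1, Y_2$ are nonsingular, each such factor is invertible. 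Hence the problem reduces to putting the middle pencil $\lambda \mathbb{M}^{\mathcal{S}}_{\tau} - \mathbb{M}^{\mathcal{S}}_{\sigma}$ into the required factored form.

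To do this, decompose $\tau = (\tau_L, -m, \tau_R)$, which is possible since $\tau$ is a permutation of $\{-m:-h-1\}$ and contains exactly one occurrence of $-m$; by construction $\tau_L$ and $\tau_R$ are sub-permutations of $\{-(m-1):-(h+1)\}$. Consequently, both $\mathbb{M}^{\mathcal{S}}_{\tau_L}$ and $\mathbb{M}^{\mathcal{S}}_{\tau_R}$ are block-diagonal of the form $\diag(\cdot, I_r)$ and invertible, with $(\mathbb{M}^{\mathcal{S}}_{\tau_L})^{-1} = \mathbb{M}^{\mathcal{S}}_{-\rev(\tau_L)}$ and analogously for $\tau_R$. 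Using $\mathbb{M}^{\mathcal{S}}_\tau = \mathbb{M}^{\mathcal{S}}_{\tau_L} \mathbb{M}^{\mathcal{S}}_{-m} \mathbb{M}^{\mathcal{S}}_{\tau_R}$ and factoring, one obtains
\begin{equation*}
\lambda \mathbb{M}^{\mathcal{S}}_\tau - \mathbb{M}^{\mathcal{S}}_\sigma = \mathbb{M}^{\mathcal{S}}_{\tau_L} \, \bigl( \lambda \mathbb{M}^{\mathcal{S}}_{-m} - \mathbb{M}^{\mathcal{S}}_\alpha \bigr) \, \mathbb{M}^{\mathcal{S}}_{\tau_R},
\end{equation*}
where $\alpha := (-\rev(\tau_L), \sigma, -\rev(\tau_R))$; the identity is verified by expanding the right-hand side and using $\mathbb{M}^{\mathcal{S}}_{\tau_L} \mathbb{M}^{\mathcal{S}}_{-\rev(\tau_L)} = I = \mathbb{M}^{\mathcal{S}}_{-\rev(\tau_R)} \mathbb{M}^{\mathcal{S}}_{\tau_R}$. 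Since $-\tau_L$ and $-\tau_R$ together form a permutation of $\{h+1:m-1\}$ while $\sigma$ is a permutation of $\{0:h\}$, the tuple $\alpha$ is a genuine permutation of $\{0:m-1\}$, and the bracketed pencil is exactly the Fiedler pencil of $G(\lambda)$ associated with $\alpha$.

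Substituting this identity into the definition of $\mathbb{L}(\lambda)$ and collapsing all the block-diagonal factors gives
\begin{equation*}
\mathbb{L}(\lambda) = \diag(\mathcal{X}, I_r) \, \bigl( \lambda \mathbb{M}^{\mathcal{S}}_{-m} - \mathbb{M}^{\mathcal{S}}_\alpha \bigr) \, \diag(\mathcal{Y}, I_r),
\end{equation*}
with $\mathcal{X} = M_{\tau_1}(Y_1) M_{\sigma_1}(X_1) M^P_{\tau_L}$ and $\mathcal{Y} = M^P_{\tau_R} M_{\sigma_2}(X_2) M_{\tau_2}(Y_2)$. Both are nonsingular because $M^P_{\tau_L}$ and $M^P_{\tau_R}$ are products of the invertible matrices $M^P_{-i}$, $i \in \{1:m-1\}$, and the remaining factors are invertible by the nonsingularity of the matrix assignments. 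Proposition~\ref{prop:gfprstlin} then delivers the conclusion. The only delicate step in the plan is verifying the displayed identity for the middle pencil; this is ultimately careful index bookkeeping together with the repeated application of $(\mathbb{M}^{\mathcal{S}}_i)^{-1} = \mathbb{M}^{\mathcal{S}}_{-i}$ for $i \in \{1:m-1\}$, and everything else is routine.
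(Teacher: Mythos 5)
Your proof is correct and takes essentially the same route as the paper: both decompose $\tau = (\tau_L, -m, \tau_R)$, define the permutation $\alpha = (-\rev(\tau_L), \sigma, -\rev(\tau_R))$, rewrite $\mathbb{L}(\lambda)$ as a product of block-diagonal nonsingular matrices with the Fiedler pencil $\lambda \mathbb{M}^{\mathcal{S}}_{-m} - \mathbb{M}^{\mathcal{S}}_\alpha$ in the middle, and invoke Proposition~\ref{prop:gfprstlin}. You spell out the factoring of the middle pencil more explicitly than the paper (which dismisses it with "it is easily seen that"), but the construction, the permutation $\alpha$, and the final appeal to Proposition~\ref{prop:gfprstlin} are identical.
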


\begin{proof}  Let $\tau$ be given by $\tau = (\beta, -m , \gamma)$. Define $\alpha : = (- rev (\beta) , \sig, - rev (\gamma) )$ and $\mathbb{T}(\lam) : = \lambda \mathbb{M}^\mathcal{S}_{-m}  -  \mathbb{M}^\mathcal{S}_{\alpha}$. Then $ \mathbb{T}(\lam)$ is a Fiedler pencil of $G(\lam)$ associated with the permutation $\alpha$ of $\{ 0:m-1\}$. It is easily seen that 	 $\mathbb{L}(\lam) =  \mathcal{A} \mathbb{T} (\lam) \mathcal{B}$,
	where $   \mathcal{A} = \mathbb{M}_{(\tau_1,\sigma_1)} (Y_1,X_1) \mathbb{M}^\mathcal{S}_\beta  = \left[ \begin{array}{@{}c@{}|c@{}}
	M_{(\tau_1,\sig_1)} (Y_1,X_1) 	M^P_{\beta}  &\\ \hline & I_r
	\end{array} \right]  $ and
	$     \mathcal{B} = \mathbb{M}^\mathcal{S}_\gamma \mathbb{M}_{(\sigma_2,\tau_2)} (X_2,Y_2) =
	\left[ \begin{array}{@{}c@{}|c@{}}
	M^P_{\gamma}	M_{(\sig_2, \tau_2)} (X_2,Y_2)  &\\ \hline & I_r
	\end{array} \right] $. Since $X_j$ and $Y_j$, $j =1,2,$ are nonsingular matrix assignments, the matrices $M_{(\tau_1,\sigma_1)} (Y_1,X_1) $ and $ M_{(\sigma_2,\tau_2)} (X_2,Y_2)$ are nonsingular. Hence by Proposition~\ref{prop:gfprstlin}, $\mathbb{L}(\lambda)$ is a Rosenbrock strong linearization of $G(\lam)$.
\end{proof}

Finally, we show that the GFPs of $G(\lam)$ are also Rosenbrock strong linearizations.

\begin{definition}[\cite{rafinami3}, GFP]
	Let $ \omega := (\omega_0, \omega_1)$ be a permutation of  $\{0: m\}.$ Then the pencil $\mathbb{T}_\omega(\lambda) := \lambda \mathbb{M}^{\mathcal{S}}_{-\omega_1} - \mathbb{M}^{\mathcal{S}}_{\omega_0}$ is said to be a generalized Fiedler pencil (GFP) of $G(\lambda)$ associated with the permutation $\omega$.
\end{definition}

\begin{theorem} Let $\mathbb{T}_\omega(\lambda) := \lambda \mathbb{M}^\mathcal{S}_{-\omega_1}  -  \mathbb{M}^\mathcal{S}_{\omega_0}$ be a GFP of $G(\lam)$, where $0 \in \omega_0$. Then $\mathbb{T}_\omega(\lambda)$  is a Rosenbrock strong linearization of $G(\lam)$.
\end{theorem}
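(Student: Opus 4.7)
The plan is to exhibit $\mathbb{T}_\omega(\lambda)$ as a bi-multiplication of a genuine Fiedler pencil of $G(\lam)$ by nonsingular block-diagonal matrices, and then invoke Proposition~\ref{prop:gfprstlin}. First observe that, since the Fiedler matrices $\mathbb{M}^{\mathcal{S}}_i$ are defined only for $i\in\{-m:m-1\}$, the hypothesis that $(\omega_0,\omega_1)$ partitions $\{0:m\}$ together with $0\in\omega_0$ forces $\omega_0\subseteq\{0:m-1\}$ and $m\in\omega_1$. Therefore we may decompose $\omega_1=(\beta,m,\gamma)$, where $\beta$ and $\gamma$ contain only indices from $\{1:m-1\}$.

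Next I would exploit that, for every $i\in\{1:m-1\}$, the matrix $\mathbb{M}^{\mathcal{S}}_{-i}$ is invertible with inverse $\mathbb{M}^{\mathcal{S}}_{i}$. Consequently $\mathbb{M}^{\mathcal{S}}_{-\beta}$ and $\mathbb{M}^{\mathcal{S}}_{-\gamma}$ are nonsingular with inverses $\mathbb{M}^{\mathcal{S}}_{\rev(\beta)}$ and $\mathbb{M}^{\mathcal{S}}_{\rev(\gamma)}$. Setting $\alpha:=(\rev(\beta),\omega_0,\rev(\gamma))$, a direct computation using these inverses gives
$$ \mathbb{T}_\omega(\lambda)=\lambda\,\mathbb{M}^{\mathcal{S}}_{-\beta}\mathbb{M}^{\mathcal{S}}_{-m}\mathbb{M}^{\mathcal{S}}_{-\gamma}-\mathbb{M}^{\mathcal{S}}_{\omega_0}
 = \mathbb{M}^{\mathcal{S}}_{-\beta}\bigl(\lambda\,\mathbb{M}^{\mathcal{S}}_{-m}-\mathbb{M}^{\mathcal{S}}_{\alpha}\bigr)\mathbb{M}^{\mathcal{S}}_{-\gamma}. $$
Viewed as a multiset, $\alpha$ equals $\omega_0\cup(\omega_1\setminus\{m\})=\{0:m\}\setminus\{m\}=\{0:m-1\}$ with each index appearing exactly once, so $\alpha$ is a permutation of $\{0:m-1\}$ and the inner pencil $\mathbb{T}_\alpha(\lam):=\lambda\,\mathbb{M}^{\mathcal{S}}_{-m}-\mathbb{M}^{\mathcal{S}}_{\alpha}$ is a Fiedler pencil of $G(\lam)$.

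Finally, because all indices in $\beta$ and $\gamma$ lie in $\{1:m-1\}$, the outer factors touch only the polynomial block: $\mathbb{M}^{\mathcal{S}}_{-\beta}=\diag(M^P_{-\beta},I_r)$ and $\mathbb{M}^{\mathcal{S}}_{-\gamma}=\diag(M^P_{-\gamma},I_r)$, with $M^P_{-\beta}$ and $M^P_{-\gamma}$ nonsingular. Thus $\mathbb{T}_\omega(\lam)=\diag(M^P_{-\beta},I_r)\,\mathbb{T}_\alpha(\lam)\,\diag(M^P_{-\gamma},I_r)$ has exactly the shape required by Proposition~\ref{prop:gfprstlin}, which immediately yields that $\mathbb{T}_\omega(\lam)$ is a Rosenbrock strong linearization of $G(\lam)$. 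The only genuinely non-trivial step is spotting and verifying the factorization that hides the Fiedler pencil $\mathbb{T}_\alpha(\lam)$ inside the GFP; once this is in hand, Proposition~\ref{prop:gfprstlin} delivers both conditions in Definition~\ref{stln} with no additional work.
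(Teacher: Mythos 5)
Your proof is correct and follows essentially the same strategy as the paper: exhibit the GFP as $\diag(\mathcal{X},X_0)\,\mathbb{F}(\lam)\,\diag(\mathcal{Y},Y_0)$ with $\mathbb{F}(\lam)$ a Fiedler pencil of $G(\lam)$ and the outer factors nonsingular block-diagonal, then invoke Proposition~\ref{prop:gfprstlin}. The paper cites \cite[Theorem~2.13]{rafinami3} for the factorization, whereas you derive it directly via the decomposition $\omega_1=(\beta,m,\gamma)$, $\alpha=(\rev(\beta),\omega_0,\rev(\gamma))$, and telescoping $\mathbb{M}^{\mathcal{S}}_{-\beta}\mathbb{M}^{\mathcal{S}}_{\rev(\beta)}=I$ and $\mathbb{M}^{\mathcal{S}}_{\rev(\gamma)}\mathbb{M}^{\mathcal{S}}_{-\gamma}=I$; your $\alpha$ agrees with the one appearing in the paper's Theorem~\ref{thm:gfpr:RSL} for the corresponding GFPR case, so the computation is sound and self-contained.
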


\begin{proof} It is shown in \cite[Theorem~2.13]{rafinami3} that $\mathbb{T}_\omega(\lambda) = \diag (\mathcal{X},\ X_0)  \ \mathbb{F}(\lam) \ \diag (\mathcal{Y}, \ Y_0)  $ for some  nonsingular matrices $\mathcal{X}, \mathcal{Y} \in \mathbb{C}^{mn \times mn}$ and  $X_0, Y_0 \in \mathbb{C}^{r \times r}$, where $\mathbb{F}(\lam)$ is a Fiedler pencil of $G(\lam)$. Hence by Proposition~\ref{prop:gfprstlin}, $\mathbb{T}(\lambda)$ is a Rosenbrock strong linearization of $G(\lam)$.
\end{proof}

\section{Structure-preserving strong  linearizations} \label{structured_lin_GFPR_Glam} This section is devoted to~the construction of structure-preserving strong linearizations of structured rational matrices. We consider only  symmetric, skew-symmetric, Hamiltonian and skew-Hamiltonian rational matrices and construct their structure-preserving strong linearizations. The construction of structure-preserving strong linearizations of  Hermitian, skew-Hermitian, para-Hermitian and para-skew-Hermitian rational matrices is similar. We show that the family of GFPRs of $G(\lam)$ is a rich source of structure-preserving strong linearizations of $G(\lam).$  Recall that $G(\lam) = P(\lam) + G_{sp}(\lam)$, where  $P(\lam) := \sum^m_{j=0} A_j \lam^j$ with $A_m \neq 0$  and $G_{sp}(\lam)$ is  strictly proper, that is, $G_{sp}(\lam) \rightarrow 0$ as $\lam \rightarrow \infty.$

\subsection{Symmetric GFPRs} Suppose that  $G(\lam)$ is  symmetric, that is,  $G( \lam)^T = G(\lam)$. Since $G(\lam) = P(\lam) +G_{sp}(\lam),$ it follows that both $P(\lam)$ and  $G_{sp}(\lam)$ are symmetric. As $G_{sp}(\lam)$ is strictly proper and symmetric, there exists a minimal symmetric realization of $G(\lam)$ given by $G_{sp}(\lam) =  B^T(\lam I_r -A)^{-1}B$, where  $A$ is a symmetric matrix \cite{fuhr, hrw, hilscot}.  Hence $G(\lam) = P(\lam) + B^T(\lam I_r -A)^{-1}B$ is a minimal symmetric realization of $G(\lam).$ The system matrix
$
\mathcal{S} (\lam) : =
\left[ \begin{array}{@{}c|c@{}} P(\lam) &  B^T\\ \hline  B &   A -  \lam I_r \end{array} \right]
$ is then symmetric and irreducible. Also, there exists a minimal symmetric realization of $G(\lam)$ of the form $G(\lam) = P(\lam) + B^T(\lam E -A)^{-1}B$, where $A$ and $E$ are symmetric matrices with $E$ being nonsingular \cite{fmq}. The system matrix
 \begin{equation} \label{symm:Slam}
 \mathcal{S} (\lam) : =
 \left[ \begin{array}{@{}c|c@{}} P(\lam) &  B^T\\ \hline  B &   A -  \lam E \end{array} \right]
 \end{equation}
is obviously symmetric and irreducible.

 A block matrix $\mathcal{H}$ is said to be {\em block-symmetric} provided that $\mathcal{H}^{\mathcal{B}} = \mathcal{H},$ see~\cite{tdm}. The block-transpose of a system matrix is defined as follows.

\begin{definition} \cite{rafinami3, behera} Let $\mathcal{A}:=\left[\begin{array}{@{}c|c@{}}
          A & u \otimes X \\
                              \hline \\[-1em]
          v^{T} \otimes Y & Z\end{array} \right] \in \mathbb{C}^{(mn+r) \times (mn+r) },$ where $A=[A_{ij}]$ is an $m \times m$ block matrix with $A_{ij}\in \mathbb{C}^{n \times n}, u, v \in \mathbb{C}^{m}, X \in \mathbb{C}^{n \times r}, Y \in \mathbb{C}^{r \times n}$ and $Z \in \mathbb{C}^{r \times r}.$ Define the block transpose of $\mathcal{A}$ by $\mathcal{A}^{\mathcal{B}}:= \left[\begin{array}{@{}c|c@{}}
          A^{\mathcal{B}} & v \otimes X \\
                              \hline \\[-1em]
          u^{T} \otimes Y & Z\end{array} \right].$
\end{definition}

Observe that $\mathcal{A}$ is block-symmetric if and only if $A$ is block-symmetric and $u = v$.

\begin{definition} \cite{BDFR15}  \label{admi_tupDef} (a) Let $h \geq 0$ be an integer. We say that $\textbf{w}$ is an admissible tuple of $\{0:h\}$ if $\textbf{w}$ is a permutation of $\{0:h \}$ and
	\begin{equation} \label{admi_tup}
	 csf(\textbf{w}) = \big( h-1:h, h-3:h-2, \ldots, p+1: p+2, 0: p \big)
	\end{equation}
for some $0 \leq p \leq h$. We call $p$ the index of $\textbf{w}$ and denote it by $Ind(\textbf{w})$.
	
	(b) Let $h \geq 0$ be an integer and let $\textbf{w}$ be an admissible tuple of $\{0:h\}$ with index $p$. Then the symmetric complement of $\textbf{w}$, denoted by $c_{\textbf{w}}$, is defined by
	$$ \textbf{c}_w :=   \left \{ \begin{array}{ll}
	\big( h-1, h-3, \ldots, p+3, p+1, (0:p)_{rev_c} \big) & \text{if }  p \geq 1,\\
	( h-1, h-3, \ldots, 1 )   & \text{if }  p=0 \text{ and }  h  > 0,\\
	~ \emptyset &  \text{if } h=0,
	\end{array}	\right.
	$$	
	where $(0:p)_{rev_c} := (0:p-1,0:p-2, \ldots, 0:1,0). $
	\end{definition}

For simplicity, we always consider an admissible tuple of the form (\ref{admi_tup}). Clearly,~for an integer $h \geq 0$, there exists a unique admissible tuple of $\{0:h\}$ with index $0$ or~$1$ (see~\cite{BDFR15}).

\begin{definition} An admissible tuple $\textbf{w}$ of $\{0:h\}$, $h \geq 0$, is said to be the simple admissible tuple if $ Ind(\textbf{w}) =0$ or $ Ind(\textbf{w}) =1$.
\end{definition}

Note that  for the simple admissible tuple $\textbf{w}$ of $ \{0:h\}$, we have $ Ind(\textbf{w}) =0$ (resp., $ Ind(\textbf{w}) =1$) if $h$ is even (resp., odd).

\begin{remark} \label{oddeven_deg} Let $\textbf{v}$ be an admissible tuple of $\{0:k\}$, $k \geq 0$, and let $ \textbf{c}_{v}$ be the symmetric complement of $\textbf{v}$. Then it follows from Definition~\ref{admi_tupDef} that  $0 \in  \textbf{c}_{v}$ if and only if $Ind(\textbf{v}) \geq 1$. In particular, for the simple admissible tuple $\textbf{w}$ of $ \{0:h\}$, we have $0 \in  \textbf{c}_{w}$ (resp., $0 \notin  \textbf{c}_{w}$) if $h$ is odd (resp., even), where $ \textbf{c}_{w}$ is the symmetric complement of $\textbf{w}$.
\end{remark}

\begin{definition} \cite{BDFR15} Given $ h \geq 0,$ we say that an index tuple $\textbf{t}$ is in canonical form for $h$ if $\textbf{t}$ is of the form
	$$ \Big( a_1:h-2, a_2: h-4,  \ldots , a_{    \lfloor  \frac{h}{2} \rfloor  } : h-2 \lfloor \tfrac{h}{2} \rfloor   \Big)$$ with $a_i \geq 0$, $i=1: \lfloor \tfrac{h}{2} \rfloor$, where  $ \lfloor  \cdot  \rfloor $ stands for the greatest integer function.
\end{definition}

Note that an index tuple in canonical form for $h$ is necessarily empty for $ h = 0,1$.

The following result characterizes all symmetric GFPRs of a matrix polynomial.

\begin{theorem} [\cite{BDFR15}, Theorem~6.11] \label{thmgfprSymmCanonicalUnknowns_RCh4}  Let $ 0 \leq h < m$. Let $\textbf{w}_h$ and $ \textbf{v}_h +m$ be the simple admissible tuples of $\{0:h\}$ and $ \{0:m-h-1\}$, respectively.  Let $ \textbf{t}_{w_h}$ and $  \textbf{t}_{v_h} +m$ be index tuples in canonical form for $ h$ and $ m-h-1$, respectively. Let $ \mathcal{X}$ and $ \mathcal{Y}$ be nonsingular matrix assignments for $ \textbf{t}_{w_h}$ and $ \textbf{t}_{v_h}$, respectively. Then
	\begin{equation} \label{gfprSymmCanonicalUnknowns_RCh4}
	L(\lambda):= M_{(\textbf{t}_{v_h},\textbf{t}_{w_h}) } ( \mathcal{Y}, \mathcal{X})( \lambda M^P_{\textbf{v}_h} - M^P_{\textbf{w}_h}) M^P_{(\textbf{c}_{w_h}, \textbf{c}_{v_h})} M_{(rev(\textbf{t}_{w_h}), rev(\textbf{t}_{v_h}))} ( rev(\mathcal{X}), rev( \mathcal{Y})),
	\end{equation}
	is a block symmetric GFPR of $P(\lambda)$, where $ \textbf{c}_{w_h}$ and $ \textbf{c}_{v_h}+m$ are the symmetric complements of $\textbf{w}_h$ and $ \textbf{v}_h +m$, respectively. Moreover, any block symmetric GFPR of $P(\lambda)$ is of the form (\ref{gfprSymmCanonicalUnknowns_RCh4}). Furthermore, if all the matrices in the matrix assignments $\mathcal{X}$ and $\mathcal{Y}$ are symmetric, then $L(\lam)$ is symmetric when $P(\lam)$ is symmetric.
\end{theorem}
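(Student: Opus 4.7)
The plan is to decouple the statement into three parts: (a) $L(\lambda)$ is a GFPR of $P(\lambda)$; (b) $L(\lambda)$ is block symmetric; (c) every block symmetric GFPR of $P(\lambda)$ has the prescribed form; and then (d) upgrade block symmetry to symmetry under the extra hypothesis on the assignments. The key structural observation to exploit throughout is that, writing $A := M_{(\textbf{t}_{v_h},\textbf{t}_{w_h})}(\mathcal{Y},\mathcal{X})$ and
\[
\mathbb{P}(\lam) := (\lambda M^P_{\textbf{v}_h} - M^P_{\textbf{w}_h})\, M^P_{(\textbf{c}_{w_h},\textbf{c}_{v_h})},
\]
the pencil $L(\lam)$ factors as $L(\lam) = A\,\mathbb{P}(\lam)\,A^{\mathcal{B}}$, because $M_{(rev(\textbf{t}_{w_h}),rev(\textbf{t}_{v_h}))}(rev(\mathcal{X}),rev(\mathcal{Y}))$ is exactly the block transpose of $A$ under the identity $M_i(X)^{\mathcal{B}} = M_i(X^T)$ valid for $i \in \{-m{+}1,\ldots,m{-}1\}\setminus\{0,-m\}$, together with the obvious action of $\mathcal{B}$ on products. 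Hence the block symmetry of $L(\lam)$ reduces to the block symmetry of $\mathbb{P}(\lam)$, and $L(\lam)$ being a GFPR reduces to checking the SIP-admissibility of the concatenated index tuples.

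For part (a), I would verify directly that $(\textbf{t}_{v_h},\textbf{t}_{w_h})$, $(rev(\textbf{t}_{w_h}),rev(\textbf{t}_{v_h}))$, and the inner tuples all conform to Definition~\ref{p02gfprglam2m181a_RCh4}: the indices in $\textbf{t}_{w_h},\textbf{c}_{w_h}$ live in $\{0:h-1\}$ and those in $\textbf{t}_{v_h}+m,\textbf{c}_{v_h}+m$ live in $\{-m:-h-2\}$, so the matrix-assignment part is in the correct range. Then the SIP check for $(\textbf{t}_{w_h},\textbf{w}_h,\textbf{c}_{w_h},rev(\textbf{t}_{w_h}))$ (and its analogue for $v_h$) is a bookkeeping exercise driven by the canonical/admissible form: one peels off the outermost $\textbf{t}$ and $rev(\textbf{t})$ using commutativity $M_i M_j = M_j M_i$ when $\||i|-|j|\| >1$, then checks that each repetition in the inner word is separated by the required successor index, which is guaranteed by the definition of the symmetric complement $\textbf{c}_{w_h}$.

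For part (b), I would establish the block symmetry of $\mathbb{P}(\lam)$ by induction on $h$ (and symmetrically on $m-h-1$). The base cases $h=0,1$ are short direct computations. For the inductive step, I would peel off the highest-index factors from $M^P_{\textbf{w}_h}$ and $M^P_{\textbf{c}_{w_h}}$ using the admissible-tuple description (\ref{admi_tup}): the factors $M^P_{h-1} M^P_{h}$ appearing at the start of $\textbf{w}_h$ pair with $M^P_{h-1}$ at the end of $\textbf{c}_{w_h}$, and commutation/transposition identities of the $M^P_i$'s reduce $\mathbb{P}(\lam)$ modulo a block-symmetric conjugation to the analogous pencil for $\{0:h-2\}$. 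Assembling the $w$- and $v$-inductions gives $\mathbb{P}(\lam)^{\mathcal{B}} = \mathbb{P}(\lam)$, hence $L(\lam)^{\mathcal{B}} = L(\lam)$.

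For part (c), the converse, I would argue by normalization: any GFPR of $P(\lam)$ can be put in the factored form $A\,\mathbb{P}'(\lam)\,B$ with $A,B$ products of elementary factors and $\mathbb{P}'(\lam) = \lam M^P_{\tau}-M^P_{\sigma}$; imposing $L(\lam)^{\mathcal{B}}=L(\lam)$ forces, after matching block rows and columns of equal polynomial degree, that $B = A^{\mathcal{B}}$ (so the outer tuples/assignments are reversed) and that the inner pencil be block symmetric; a characterization of block symmetric products of the form $M^P_{\tau}\cdot\text{(something)}$ and $M^P_{\sigma}\cdot\text{(something)}$ then forces $\sigma=\textbf{w}_h$ to be the simple admissible tuple of $\{0:h\}$ and the trailing factor to be $M^P_{\textbf{c}_{w_h}}$, and similarly for $\tau$. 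Finally, part (d) is immediate: once $L(\lam)$ is block symmetric and every assigned matrix as well as every $A_i$ is symmetric, each diagonal block $L_{ii}(\lam)$ is built from symmetric matrices, so $L(\lam)^T = L(\lam)$.

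The main obstacle is the converse direction (c): block symmetry is a rigid constraint on the combinatorics of products of Fiedler matrices, and extracting the exact admissible-tuple-plus-symmetric-complement structure from it requires a careful normal-form argument for block symmetric words in the $M^P_i$, which is the technical core of the result (and is where I expect to lean most heavily on the machinery of \cite{BDFR15}).
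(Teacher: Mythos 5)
The paper does not prove this statement at all: it is cited verbatim as \cite{BDFR15}, Theorem~6.11, and used as a black box. There is therefore no ``paper proof'' to compare against, and your proposal has to be judged on its own merits.

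Your factorization $L(\lam)=A\,\mathbb{P}(\lam)\,A^{\mathcal{B}}$ is the right structural idea, but the identity you invoke to justify it is wrong in a way worth flagging. Block transpose, as defined in the paper via $(\mathcal{H}^{\mathcal{B}})_{ij}=\mathcal{H}_{ji}$, does \emph{not} transpose the individual blocks. Since every $M_i(X)$, $i\in\{-m:m-1\}$ (including $i=0$ and $i=-m$), has a block pattern that is symmetric under swapping block row/column indices, one has $M_i(X)^{\mathcal{B}}=M_i(X)$, with $X$ unchanged; it is the \emph{ordinary} transpose that gives $M_i(X)^{T}=M_i(X^{T})$. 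Your stated identity $M_i(X)^{\mathcal{B}}=M_i(X^{T})$ is false, and if it were true it would in fact \emph{break} your argument: you would then get $A^{\mathcal{B}}=M_{(rev(\textbf{t}_{w_h}),rev(\textbf{t}_{v_h}))}(rev(\mathcal{X}^T),rev(\mathcal{Y}^T))$, which agrees with the trailing factor in (\ref{gfprSymmCanonicalUnknowns_RCh4}) only when all assigned matrices are symmetric, whereas the block-symmetry claim is supposed to hold for arbitrary assignments. Fortunately the corrected identity gives $A^{\mathcal{B}}=M_{(rev(\textbf{t}_{w_h}),rev(\textbf{t}_{v_h}))}(rev(\mathcal{X}),rev(\mathcal{Y}))$ on the nose, so the conclusion you wanted does follow, and your restriction $i\notin\{0,-m\}$ was also unnecessary (a canonical tuple $\textbf{t}_{w_h}$ can contain $0$). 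The transpose version $M_i(X)^{T}=M_i(X^{T})$ is what you actually need in part (d), together with symmetry of the $A_j$ and of each assigned matrix.

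The genuine gap is the converse in part (c). Your proposal asserts that imposing $L^{\mathcal{B}}=L$ ``forces'' the outer tuples to reverse each other and the middle pencil to split as $(\lam M^{P}_{\textbf{v}_h}-M^{P}_{\textbf{w}_h})M^{P}_{(\textbf{c}_{w_h},\textbf{c}_{v_h})}$ with $\textbf{w}_h$ and $\textbf{v}_h+m$ the simple admissible tuples, but no argument is offered; that normal-form classification of block-symmetric words in the $M^{P}_i$ is exactly the nontrivial content of the theorem. Until that classification is supplied (or explicitly delegated to the results of \cite{BDFR15} with a precise citation), the ``moreover, any block symmetric GFPR of $P(\lam)$ is of the form (\ref{gfprSymmCanonicalUnknowns_RCh4})'' part of the statement remains unproven.
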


The pencil in (\ref{gfprSymmCanonicalUnknowns_RCh4}) is denoted by $L_P (h, \textbf{t}_{w_h}, \textbf{t}_{v_h}, \mathcal{X},\mathcal{Y})$ and   is uniquely determined by $h, \textbf{t}_{w_h}, \textbf{t}_{v_h},$ $ \mathcal{X}$ and $\mathcal{Y}$, see~\cite{BDFR15}.

\begin{definition} Let $h, \textbf{w}_{h}, \textbf{c}_{w_h}, \textbf{t}_{w_h}, \textbf{v}_{h}, \textbf{c}_{v_h}, \textbf{t}_{v_h},$ $ \mathcal{X}$ and $\mathcal{Y}$ be as given in Theorem~\ref{thmgfprSymmCanonicalUnknowns_RCh4}. Then we define
\begin{equation} \label{L_lam}
\mathbb{L}(\lambda) :=
\mathbb{M}_{(\textbf{t}_{v_h},\textbf{t}_{w_h}) } ( \mathcal{Y}, \mathcal{X})( \lambda \mathbb{M}^\mathcal{S}_{\textbf{v}_h} - \mathbb{M}^\mathcal{S}_{\textbf{w}_h}) \mathbb{M}^P_{(\textbf{c}_{w_h}, \textbf{c}_{v_h})} \mathbb{M}_{(rev(\textbf{t}_{w_h}), rev(\textbf{t}_{v_h}))} ( rev(\mathcal{X}), rev( \mathcal{Y})).
\end{equation}
The pencil $\mathbb{L}(\lam)$ in (\ref{L_lam}) is uniquely determined by  $h, \textbf{t}_{w_h}, \textbf{t}_{v_h},$ $ \mathcal{X}$ and $\mathcal{Y}$. We denote $\mathbb{L}(\lam)$ by $\mathbb{L}_{\mathcal{S}} (h, \textbf{t}_{w_h}, \textbf{t}_{v_h}, \mathcal{X},\mathcal{Y})$.
\end{definition}

The following result characterizes all block-symmetric GFPRs of  $G(\lam)$.

\begin{theorem}  \label{lem:gfpr:blksymm} Let $ \mathcal{S} (\lam)$ be given in (\ref{slamsystemmatrix_RCh4}). Let $ 0 \leq h \leq m-1$ be even. Consider the GFPR $\mathbb{L}(\lam): = \mathbb{L}_\mathcal{S}  (h, \textbf{t}_{w_h}, \textbf{t}_{v_h}, \mathcal{X},\mathcal{Y})$ associated with ${\mathcal{S}}(\lambda )$. Then \begin{equation} \label{blsymmgfpr}
\mathbb{L}(\lambda)  =
\left[
\begin{array}{c|c}
L_P (h, \textbf{t}_{w_h}, \textbf{t}_{v_h}, \mathcal{X},\mathcal{Y}) &  e_{m-i_0(\textbf{t}_{w_h},\textbf{w}_h)} \otimes C \\[.1em] \hline \\[-1em]
e^T_{m-c_0(\textbf{w}_h, \textbf{c}_{w_h}, rev (\textbf{t}_{w_h}) ) } \otimes B  & A-\lam E \\
\end{array}
\right]. \end{equation}
Further, we have the following:
	
(a) $\mathbb{L}(\lambda)$ is a block symmetric GFPR of $\mathcal{S}(\lambda)$. Further, any block symmetric GFPR of $\mathcal{S}(\lambda)$ must be of the form $\mathbb{L}_\mathcal{S}  (h, \textbf{t}_{w_h}, \textbf{t}_{v_h}, \mathcal{X},\mathcal{Y})$ for some even $ 0 \leq h \leq m-1$.
	
(b) If $m$ is odd then $ \mathbb{L}(\lambda)$ is a  Rosenbrock strong linearization of $\mathcal{S}(\lambda).$
If $m$ is even then $ \mathbb{L}(\lambda)$ is a  Rosenbrock strong linearization of $\mathcal{S}(\lambda)$ when the leading coefficient of $P(\lam)$ is nonsingular.
 \end{theorem}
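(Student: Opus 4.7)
The plan is to deduce the block form (\ref{blsymmgfpr}) from Theorem~\ref{gfprptoge_RCh4} and then derive parts (a) and (b) from it. To apply that theorem I must first recognize $\mathbb{L}(\lambda)$ as a GFPR of the canonical shape in Definition~\ref{p02gfprglam2m181a_RCh4}. Observe that the factor $\mathbb{M}^P_{(\textbf{c}_{w_h}, \textbf{c}_{v_h})} \mathbb{M}_{(rev(\textbf{t}_{w_h}), rev(\textbf{t}_{v_h}))}$ mixes indices in $\{0:h-1\}$ (from $\textbf{c}_{w_h}$ and $\textbf{t}_{w_h}$) with indices in $\{-m:-h-2\}$ (from $\textbf{c}_{v_h}$ and $\textbf{t}_{v_h}$). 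Since $||i|-|j|| \geq 2$ whenever $i \in \{0:h-1\}$ and $j \in \{-m:-h-2\}$, the corresponding elementary matrices commute, and I can reorganize this factor as $\mathbb{M}_{\sigma_2}(X_2)\, \mathbb{M}_{\tau_2}(Y_2)$ with $\sigma_2 := (\textbf{c}_{w_h}, rev(\textbf{t}_{w_h}))$ and $\tau_2 := (\textbf{c}_{v_h}, rev(\textbf{t}_{v_h}))$. Setting $\sigma_1 := \textbf{t}_{w_h}$, $\tau_1 := \textbf{t}_{v_h}$, $\sigma := \textbf{w}_h$, and $\tau := \textbf{v}_h$, Theorem~\ref{gfprptoge_RCh4} yields (\ref{blsymmgfpr}); the upper-left block equals $L_P(h, \textbf{t}_{w_h}, \textbf{t}_{v_h}, \mathcal{X}, \mathcal{Y})$, and only $\sigma_2$ (respectively $\sigma_1$) can contribute to $c_0$ (respectively $i_0$) at index $0$, since $\tau_1, \tau_2$ consist of negative indices.

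For part (a), Theorem~\ref{thmgfprSymmCanonicalUnknowns_RCh4} tells me that $L_P$ is block-symmetric. To extend this to block-symmetry of $\mathbb{L}(\lambda)$, I need the position index of $C$ in the top-right to agree with that of $B$ in the bottom-left, i.e.
\[
i_0(\textbf{t}_{w_h}, \textbf{w}_h) \;=\; c_0(\textbf{w}_h, \textbf{c}_{w_h}, rev(\textbf{t}_{w_h})).
\]
Because $h$ is even, the simple admissible tuple $\textbf{w}_h$ has $Ind(\textbf{w}_h)=0$, so the last entry of $\textbf{w}_h$ is $0$ and $0 \notin \textbf{c}_{w_h}$. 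A direct combinatorial inspection of the canonical form of $\textbf{t}_{w_h}$ (whose last block, if nonempty, is $(a:0)$) together with the known structure of $\textbf{c}_{w_h}$ will then show that the number of consecutive consecutions $0,1,2,\ldots$ past the final $0$ inside $(\textbf{w}_h, \textbf{c}_{w_h}, rev(\textbf{t}_{w_h}))$ equals the number of consecutive inversions $\ldots,2,1,0$ ending at the final $0$ inside $(\textbf{t}_{w_h}, \textbf{w}_h)$. For the converse assertion, Theorem~\ref{thmgfprSymmCanonicalUnknowns_RCh4} forces $L_P$ into the stated symmetric form, and the just-established index equality fails when $h$ is odd (since for $Ind(\textbf{w}_h)=1$ the tuple $\textbf{w}_h$ ends in $0,1$, producing a mismatch between $i_0$ and $c_0$), restricting $h$ to be even.

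For part (b), I would invoke Theorem~\ref{thm:gfpr:RSL}, which guarantees that a GFPR of $G(\lambda)$ whose matrix assignments are all nonsingular is a Rosenbrock strong linearization. The user-supplied $\mathcal{X}$, $\mathcal{Y}$ (and hence $rev(\mathcal{X})$, $rev(\mathcal{Y})$) are nonsingular by hypothesis, so only the trivial assignments on $\textbf{c}_{w_h}$ and $\textbf{c}_{v_h}$ need inspection. A trivial assignment fails to be nonsingular only at positions occupied by $0$ (where $M^P_0 = M_0(-A_0)$ requires $A_0$ invertible) or $-m$ (where $M^P_{-m} = M_{-m}(A_m)$ requires $A_m$ invertible). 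Since $Ind(\textbf{w}_h)=0$ for even $h$, Remark~\ref{oddeven_deg} gives $0 \notin \textbf{c}_{w_h}$, so $A_0$ is irrelevant. Applying the same remark to $\textbf{v}_h+m$ (the simple admissible tuple of $\{0:m-h-1\}$), $-m \in \textbf{c}_{v_h}$ iff $Ind(\textbf{v}_h+m)=1$ iff $m-h-1$ is odd iff $m$ is even (as $h$ is even). Hence when $m$ is odd no extra hypothesis is needed, while for $m$ even the nonsingularity of $A_m$ is exactly what makes $M^P_{-m}$ invertible and allows Theorem~\ref{thm:gfpr:RSL} to apply. The main obstacle throughout is the combinatorial index equality in part (a); the rest of the argument reduces to tracking which matrix assignments encounter the troublesome indices $0$ and $-m$.
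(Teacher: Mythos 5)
Your overall plan is the paper's: apply Theorem~\ref{gfprptoge_RCh4} with $\sigma=\textbf{w}_h$, $\sigma_1=\textbf{t}_{w_h}$, $\sigma_2=(\textbf{c}_{w_h},rev(\textbf{t}_{w_h}))$, $\tau=\textbf{v}_h$, $\tau_1=\textbf{t}_{v_h}$, $\tau_2=(\textbf{c}_{v_h},rev(\textbf{t}_{v_h}))$ to get (\ref{blsymmgfpr}), reduce block-symmetry of $\mathbb{L}(\lambda)$ to the index equality $i_0(\textbf{t}_{w_h},\textbf{w}_h)=c_0(\textbf{w}_h,\textbf{c}_{w_h},rev(\textbf{t}_{w_h}))$, and derive (b) via Theorem~\ref{thm:gfpr:RSL} by checking whether $0$ or $-m$ occur in $\textbf{c}_{w_h}$ or $\textbf{c}_{v_h}$. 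The derivation of (\ref{blsymmgfpr}), including your explicit commutativity observation for reorganizing the right factor into $\mathbb{M}_{\sigma_2}\mathbb{M}_{\tau_2}$, is correct and slightly more careful than the paper. Part (b) matches the paper.

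The gap is in part (a). For even $h$ you only gesture at the index equality; the paper computes, for $h>0$ even, $c_0(\textbf{w}_h,\textbf{c}_{w_h},rev(\textbf{t}_{w_h}))=2+c_2(rev(\textbf{t}_{w_h}))$ and $i_0(\textbf{t}_{w_h},\textbf{w}_h)=2+i_2(\textbf{t}_{w_h})$, which agree because $i_t(\beta)=c_t(rev(\beta))$; this should be carried out explicitly. More seriously, for the converse you assert that for odd $h$ the tuple $\textbf{w}_h$ ending in $(0,1)$ "produces a mismatch between $i_0$ and $c_0$", but that is not immediate. For $h>1$ odd one gets $c_0(\textbf{w}_h,\textbf{c}_{w_h},rev(\textbf{t}_{w_h}))=3+i_3(\textbf{t}_{w_h})$ and $i_0(\textbf{t}_{w_h},\textbf{w}_h)=1+i_1(\textbf{t}_{w_h})$, and a priori, if $i_1(\textbf{t}_{w_h})=p\geq 1$, one could have $i_3(\textbf{t}_{w_h})=p-2$, making the two sides equal. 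The paper rules this out by a genuine combinatorial argument on the canonical form $\textbf{t}_{w_h}=\big(a_1:h-2,\,a_2:h-4,\,\ldots,\,a_{(h-1)/2}:1\big)$: each index of the inversion subtuple $(p+1,p,\ldots,1)$ lies in a distinct string with distinct odd right endpoints, which forces $(p+2,p+1,\ldots,3)$ to also be a subtuple, so $i_3(\textbf{t}_{w_h})\geq p-1$ and therefore $3+i_3\geq p+2>p+1=1+i_1$. Without this (or an equivalent) string argument, the "only if'' direction of (a) is unproven.
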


\begin{proof} By substituting $ \sigma=\textbf{w}_h$, $ \sigma_1 = \textbf{t}_{w_h}$, $ \sigma_2= (\textbf{c}_{w_h}, rev(\textbf{t}_{w_h}))$, $ \tau = \textbf{v}_h,   \tau_1 = \textbf{t}_{v_h}$ and $ \tau_2 = (\textbf{c}_{v_h}, rev(\textbf{t}_{v_h}))$ in Theorem~\ref{gfprptoge_RCh4}, we obtain (\ref{blsymmgfpr}).

 By Theorem~\ref{thmgfprSymmCanonicalUnknowns_RCh4},  $L_P (h, \textbf{t}_{w_h}, \textbf{t}_{v_h}, \mathcal{X},\mathcal{Y})$ is a block symmetric pencil. Hence it follows that $\mathbb{L}(\lambda)$ is block symmetric if and only if $c_0(\textbf{w}_h,\textbf{c}_{w_h}, rev(\textbf{t}_{w_h}) ) = i_0( \textbf{t}_{w_h}, \textbf{w}_h )$. Next, we show that $c_0(\textbf{w}_h,\textbf{c}_{w_h}, rev(\textbf{t}_{w_h}) ) = i_0( \textbf{t}_{w_h}, \textbf{w}_h )$.

Case-I: Suppose that $h=0$. Then $ \textbf{w}_h= (0)$ and $\textbf{c}_{w_h} = \emptyset = \textbf{t}_{w_h}$. Hence $i_0( \textbf{t}_{w_h}, \textbf{w}_h ) =0=c_0(\textbf{w}_h,\textbf{c}_{w_h}, rev(\textbf{t}_{w_h}) ) $.

Case-II: Suppose that $ h>0$. Since $h$ is even and $\textbf{w}_h$ is the simple admissible tuple of $\{0:h\}$, we have $\textbf{w}_h =(h-1:h, h-3:h-2, \ldots,1:2,0)$ and $\textbf{c}_{w_h} = ( h-1, h-3, \ldots, 3,1)$. Thus $c_0(\textbf{w}_h,\textbf{c}_{w_h}, rev(\textbf{t}_{w_h}) )=2+c_2(rev(\textbf{t}_{w_h})) $ and $i_0(\textbf{t}_{w_h}, \textbf{w}_h)= 2+ i_2(\textbf{t}_{w_h})$. (Recall that for any index tuple $\beta$ and for any index $t$,  if $t \notin \beta$ then $c_t(\beta) =-1 = i_t(\beta)$).  Hence $\mathbb{L} (\lam)$ is block-symmetric since $i_t(\beta)=c_t(rev(\beta))$ for any index tuple $ \beta$ and any index $t$. This proves the first part of (a).

 Next we prove that, if $h$ is odd, then $c_0(\textbf{w}_h,\textbf{c}_{w_h}, rev(\textbf{t}_{w_h}) ) \neq i_0( \textbf{t}_{w_h}, \textbf{w}_h )$. Then it follows from (\ref{blsymmgfpr}) that $ \mathbb{L} (\lam)$ is not a block symmetric GFPR of $\mathcal{S}(\lam)$. This will prove the second part of (a).

Let $ h \geq 0$ be odd. If $h=1$ then $ \textbf{w}_h= (0,1)$, $\textbf{c}_{w_h} =(0)$ and $ \textbf{t}_{w_h} =\emptyset$. Thus $c_0(\textbf{w}_h,\textbf{c}_{w_h}, rev(\textbf{t}_{w_h}) ) =1 $ and $i_0( \textbf{t}_{w_h}, \textbf{w}_h )=0$. Hence $ \mathbb{L} (\lam)$ is not block symmetric.

Next, suppose that $h>1$. Then $\textbf{w}_h =(h-1:h, h-3:h-2, \ldots,2:3,0:1)$ and $\textbf{c}_{w_h} = ( h-1, h-3, \ldots, 2,0)$. Thus $c_0(\textbf{w}_h,\textbf{c}_{w_h}, rev(\textbf{t}_{w_h}) )= 3+ c_3(rev(\textbf{t}_{w_h})) = 3+ i_3( \textbf{t}_{w_h})$ and $i_0(\textbf{t}_{w_h}, \textbf{w}_h)= 1+ i_1(\textbf{t}_{w_h})$. We show that $3+ i_3( \textbf{t}_{w_h}) \neq 1+ i_1(\textbf{t}_{w_h}).$  Let $i_1(\textbf{t}_{w_h}) =p$.  If $ p=-1$ or $p=0$ then  $1+ i_1(\textbf{t}_{w_h}) < 2 \leq 3+ i_3( \textbf{t}_{w_h})$ and hence the desired result follows. Suppose that $ p \geq 1.$ Note that $\textbf{t}_{w_h}$ is in canonical form for $h$ ($h>1$ is odd), i.e.,
\begin{equation} \label{twh_canonicalform}
\textbf{t}_{w_h}= \big ( a_1:h-2, a_2: h-4,  \ldots ,a_{\frac{h-1}{2} -1} :3, a_{\frac{h-1}{2}} :1   \big ).
\end{equation}
We call $(a_j:h-2j), \, j=1,2,\ldots,\frac{h-1}{2}$, as the strings of $\textbf{t}_{w_h}$ and $h-2j$ as the right end point of the string $(a_j:h-2j).$  Since $i_1(\textbf{t}_{w_h}) =p$, $ ( p+1, p, \ldots,3, 2,1)$ is a subtuple of $\textbf{t}_{w_h}$ and $ (p+2, p+1, p, \ldots, 2,1)$ is not a subtuple of $\textbf{t}_{w_h}$. It is clear from (\ref{twh_canonicalform}) that each index of the subtuple $ ( p+1, p, \ldots, 2,1)$ of $\textbf{t}_{w_h}$ belongs to distinct string of $\textbf{t}_{w_h}$. By collecting all those strings we have a subtuple
$$
\Big ( (p+1:b_{p+1}) , (p:b_p), \ldots,(3:b_3) ,(2 :b_2), (1:b_1) \Big )
$$ of $\textbf{t}_{w_h}$, where $b_j$'s are the right end points of the collected strings. Hence $b_j \in \{ 1,3,5, \ldots, h-4, h-2\}$ for $j=1: p+1$ is such that $ b_{p+1} > b_p > \cdots > b_3 >b_2> b_1.$ This implies that $b_2 \geq 3$ and hence $3 \in (2:b_2)$, $b_3 \geq 5$ and hence $4 \in (3:b_3)$, and so on $p+1 \in (p:b_p)$ and $p+2 \in (p+1: b_{p+2})$. Consequently, $( p+2, p+1, \ldots, 4,3)$ is a subtuple of $\textbf{t}_{w_h}$ and $i_3(\textbf{t}_{w_h})\geq  p-1$. So $ 3+ i_3( \textbf{t}_{w_h}) \geq p+2 > p+1 = 1+ i_1(\textbf{t}_{w_h})$. Hence $c_0(\textbf{w}_h,\textbf{c}_{w_h}, rev(\textbf{t}_{w_h}) ) \neq i_0(\textbf{t}_{w_h}, \textbf{w}_h) $ and  $ \mathbb{L} (\lam)$ is not a block symmetric GFPR of $\mathcal{S}(\lam)$. This completes the proof of the second part of (a).

(b) Since $h$ is even, by Remark~\ref{oddeven_deg} we have  $ 0 \notin \textbf{c}_{w_h}$. This implies that the matrix assignment for $\textbf{c}_{w_h}$ is nonsingular. Further, it is given that $\mathcal{X}$ and $\mathcal{Y}$ are nonsingular matrix assignments for $\textbf{t}_{w_h}$ and $\textbf{t}_{v_h}$, respectively. Consequently, by taking $ \sig : =\textbf{w}_h, \tau : = \textbf{v}_h, \sig_1 := \textbf{t}_{w_h}, \sig_2 := (\textbf{c}_{w_h}, rev (\textbf{t}_{w_h}))$, $ \tau_1 := \textbf{t}_{v_h}$ and $ \tau_2 := (\textbf{c}_{v_h}, rev(\textbf{t}_{v_h}))$, it follows from Theorem~\ref{thm:gfpr:RSL} that  $\mathbb{L}(\lam)$ is a Rosenbrock strong linearization of $\mathcal{S}(\lambda)$  if the matrix assignment for $\textbf{c}_{v_h}$ is nonsingular. Suppose that $m$ is odd. Then $m-h-1$ is even (since $h$ is even) and by Remark~\ref{oddeven_deg}, it follows that $  0 \notin \textbf{c}_{v_h}+m \implies -m \notin \textbf{c}_{v_h}$. Hence the matrix assignment for $\textbf{c}_{v_h}$ is nonsingular. On the other hand, if the leading coefficient of $P(\lam)$ is nonsingular then the matrix assignment for $\textbf{c}_{v_h}$ is nonsingular irrespective of $m$ being even or odd. Hence  $ \mathbb{L}(\lambda)$ is a block symmetric Rosenbrock strong linearization of $\mathcal{S}(\lambda)$.
\end{proof}

\begin{corollary} \label{symm:GFPR} Let $G(\lam)$ be symmetric and  ${\mathcal{S}}(\lambda ) $ be  given  in (\ref{symm:Slam}). Let $ 0 \leq h \leq m-1$ be even. Consider the GFPR $$\mathbb{L}(\lam): = \mathbb{L}_\mathcal{S}  (h, \textbf{t}_{w_h}, \textbf{t}_{v_h}, \mathcal{X},\mathcal{Y}) = \left[
	\begin{array}{c|c}
	L_P(h, \textbf{t}_{w_h}, \textbf{t}_{v_h}, \mathcal{X},\mathcal{Y}) &  e_{m- \alpha} \otimes B^T \\[.1em] \hline \\[-1em]
	e^T_{m-\alpha } \otimes B  & A-\lam E \\
	\end{array}
	\right]$$ associated with ${\mathcal{S}}(\lambda )$, where $\alpha : = i_0(\textbf{t}_{w_h},\textbf{w}_h)$, $\mathcal{X}$ and $\mathcal{Y}$ are nonsingular matrix assignments and all the matrices in $\mathcal{X}$ and $\mathcal{Y}$ are symmetric. If $m$ is odd then $ \mathbb{L}(\lambda)$ is a symmetric Rosenbrock strong linearization of $G(\lambda).$
	If $m$ is even then $ \mathbb{L}(\lambda)$ is a symmetric Rosenbrock strong linearization of $G(\lambda)$ when the leading coefficient of $P(\lam)$ is nonsingular.  Also the transfer function $\mathbb{G}(\lam) : = L(\lam) +  (e_{m-\alpha} \otimes B^T ) ( \lam E - A)^{-1} (e^T_{m-\alpha} \otimes B) $ of $\mathbb{L}(\lam) $ is  symmetric, where $L(\lam) :=L_P(h, \textbf{t}_{w_h}, \textbf{t}_{v_h}, \mathcal{X},\mathcal{Y}).$
\end{corollary}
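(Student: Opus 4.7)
My plan is to reduce the claim to the combination of Theorem~\ref{lem:gfpr:blksymm} and the ``moreover'' clause of Theorem~\ref{thmgfprSymmCanonicalUnknowns_RCh4}, exploiting the specific minimal symmetric realization $G(\lam) = P(\lam) + B^T(\lam E - A)^{-1}B$ with $A$ and $E$ symmetric. First I would note that, since $G(\lam)$ and $G_{sp}(\lam) = B^T(\lam E -A)^{-1} B$ are both symmetric, the polynomial part $P(\lam)$ is symmetric as well. With $C = B^T$, Theorem~\ref{lem:gfpr:blksymm} already places $\mathbb{L}(\lam)$ in the block form displayed in the statement, and the even-$h$ portion of its proof gives the equality $i_0(\textbf{t}_{w_h},\textbf{w}_h) = c_0(\textbf{w}_h,\textbf{c}_{w_h}, \rev(\textbf{t}_{w_h})) =: \alpha$, so that the same unit vector $e_{m-\alpha}$ appears in both off-diagonal blocks.

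Next I would verify ordinary (not merely block) symmetry of $\mathbb{L}(\lam)$, block by block: the diagonal block $L_P(h, \textbf{t}_{w_h}, \textbf{t}_{v_h}, \mathcal{X},\mathcal{Y})$ is symmetric by the final clause of Theorem~\ref{thmgfprSymmCanonicalUnknowns_RCh4} (which upgrades block-symmetry to genuine symmetry, since $P(\lam)$ is symmetric and every matrix in $\mathcal{X}, \mathcal{Y}$ is symmetric); the trailing block $A - \lam E$ is symmetric because $A$ and $E$ are; and the two off-diagonal blocks $e_{m-\alpha}\otimes B^T$ and $e_{m-\alpha}^T \otimes B$ are transposes of each other. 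Hence $\mathbb{L}(\lam)^T = \mathbb{L}(\lam)$.

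The Rosenbrock strong linearization property is then inherited directly from part~(b) of Theorem~\ref{lem:gfpr:blksymm}, under precisely the same conditions on $m$ and on the leading coefficient of $P(\lam)$ that are stated in the corollary. Finally, for the transfer function I would compute
$$\mathbb{G}(\lam)^T = L(\lam)^T + (e_{m-\alpha}^T\otimes B)^T \bigl((\lam E - A)^{-1}\bigr)^T (e_{m-\alpha}\otimes B^T)^T$$
and invoke the symmetry of $L(\lam)$ and of $\lam E - A$ to conclude $\mathbb{G}(\lam)^T = \mathbb{G}(\lam)$. The whole argument is essentially bookkeeping layered on top of the two earlier theorems; the only substantive point needed is the equality $i_0 = c_0 = \alpha$ for even $h$, which was already established inside the proof of Theorem~\ref{lem:gfpr:blksymm}(a), so I do not expect a genuine obstacle.
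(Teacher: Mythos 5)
Your proposal is correct and follows essentially the same route as the paper: invoke Theorem~\ref{lem:gfpr:blksymm} with $C = B^T$ to obtain the displayed block form, the identity $i_0(\textbf{t}_{w_h},\textbf{w}_h) = c_0(\textbf{w}_h,\textbf{c}_{w_h}, \rev(\textbf{t}_{w_h})) = \alpha$, and the Rosenbrock strong linearization property, then invoke Theorem~\ref{thmgfprSymmCanonicalUnknowns_RCh4} for the symmetry of $L_P$ and read off the symmetry of $\mathbb{L}(\lam)$ and $\mathbb{G}(\lam)$ block by block. The only difference is that you spell out the block-by-block symmetry check and the transfer-function computation a bit more explicitly than the paper does.
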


\begin{proof} By considering $C = B^T$  it follows from the proof of Theorem~\ref{lem:gfpr:blksymm} that
\begin{align}
\mathbb{L}(\lambda) = \left[
\begin{array}{c|c}
L_P(h, \textbf{t}_{w_h}, \textbf{t}_{v_h}, \mathcal{X},\mathcal{Y}) &  e_{m- \alpha} \otimes B^T \\[.1em] \hline \\[-1em]
e^T_{m-\alpha } \otimes B  & A-\lam E \\
\end{array}
\right] \label{BsymmGFPRn}
\end{align}	
is a block symmetric Rosenbrock strong linearization of  $\mathcal{S}(\lambda)$, where $\alpha : = i_0(\textbf{t}_{w_h},\textbf{w}_h)$. Since $P(\lam)$ is symmetric and all the  matrices in the  matrix assignments $\mathcal{X}$ and $\mathcal{Y}$ are symmetric, by Theorem~\ref{thmgfprSymmCanonicalUnknowns_RCh4}, we have $L_P(h, \textbf{t}_{w_h}, \textbf{t}_{v_h}, \mathcal{X},\mathcal{Y})$ is symmetric. Further, since $A$ and $E$ are symmetric, it follows from (\ref{BsymmGFPRn}) that $\mathbb{L}(\lam)$  and $\mathbb{G} (\lam)$ are symmetric.
\end{proof}

\begin{example} Let $G(\lam) = \sum_{i=0}^5 \lambda^i A_i + B^T (\lam E -A)^{-1}B$ be symmetric. Consider  $ h=2$, $\textbf{t}_{w_h}=(0)$ and $\textbf{t}_{v_h}= (-5)$. Let $X$ and $Y$ be any arbitrary nonsingular symmetric matrices. Then the GFPR
	$$\mathbb{L}_{\mathcal{S}} (h, \textbf{t}_{w_h}, \textbf{t}_{v_h}, \mathcal{X},\mathcal{Y}) =
	\left[\begin{array}{@{}ccccc|c@{}}
	0 & -Y & \lam Y & 0 & 0 & 0\\
	-Y & \lam A_{5}-A_{4} & \lam A_{4}  & 0 & 0 & 0\\
	\lam Y  & \lam  A_{4}  & \lam A_{3}  + A_{2} & A_{1} & -X & 0\\
	0 & 0 & A_{1} & - \lam A_{1} + A_{0} & \lam X & B^T\\
	0 & 0 & -X & \lam  X & 0 & 0 \\ \hline
	0 & 0& 0& B& 0& A- \lam E \end{array}\right] $$
	is a symmetric Rosenbrock strong linearization of $\mathcal{S}(\lam)$. Note that $\mathbb{L}_{\mathcal{S}} (h, \textbf{t}_{w_h}, \textbf{t}_{v_h}, \mathcal{X},\mathcal{Y})$ is a block penta-diagonal pencil.

Next, let $G(\lambda) := \sum_{i=0}^6 \lambda^i A_i+  B^T(\lam E -A)^{-1}B$ be symmetric. Consider  $ h=0$, $\textbf{t}_{w_h}= \emptyset$ and $\textbf{t}_{v_h} = (-6:-3, -6:-5).$ Then
the GFPR $ \mathbb{L}_{\mathcal{S}} (h, \textbf{t}_{w_h}, \textbf{t}_{v_h}, \mathcal{X},\mathcal{Y})$
{\small $$= \left[ \begin{array}{@{}cccccc|@{}c@{}}
	0 & 0&0&0&-A_6 & \lam A_6 & 0\\
	0 & 0&0& -A_6 & \lam A_6 - A_5 & \lam A_5& 0\\
	0 & 0 & -A_6 & \lam A_6 -A_5 & \lam A_5 - A_4 & \lam A_4& 0\\
	0 & -A_6 & \lam A_6 -A_5 & \lam A_5 - A_4 & \lam A_4-A_3 & \lam A_3& 0\\
	-A_6 & \lam A_6 -A_5 & \lam A_5 - A_4 & \lam A_4-A_3 & \lam A_3 - A_2 & \lam A_2& 0\\
	\lam A_6 & \lam A_5 & \lam A_4 & \lam A_3 & \lam A_2 & \lam A_1 + A_0& B^T\\ \hline \\[-1.1em]
	0  & 0& 0& 0& 0& B & A- \lam E
	\end{array}\right]$$}is a symmetric Rosenbrock strong linearization of  $G(\lam)$ when $A_6$ is nonsingular, where $\mathcal{X}$ and $\mathcal{Y}$ are the trivial matrix assignments.
\end{example}

We now show that the transfer function of a real symmetric strong linearization preserves the Cauchy-Maslov index of a real symmetric rational matrix.

\begin{definition} \cite{cauchyind} The Cauchy-Maslov index of a real symmetric rational matrix $G(\lam)$  is defined by $\mathbf{Ind}_{\mathrm{CM}}(G):=$ (\# eigenvalues  of  $G(\lam)$ which jump from $-\infty$ to $+\infty )$  $\; -\; $   (\# eigenvalues of  $G(\lam)$ which jump from $+\infty$ to $-\infty$)  as the real parameter $ \lam$ traverses  from $-\infty$ to $+\infty$.
\end{definition}

 The Cauchy-Maslov index of a real symmetric rational matrix plays an important role in many applications such as in networks of linear systems, see~\cite{cauchyind, byrn, byrn2, hug} and the references therein. It is therefore desirable to construct real symmetric linearizations of $G(\lam)$ whose transfer functions preserve the Cauchy-Maslov index of $G(\lam).$

\begin{theorem}  Let $G(\lam)$ be real symmetric and  ${\mathcal{S}}(\lambda ) $ be as given  in (\ref{symm:Slam}).  Let $\mathbb{L}(\lam): = \mathbb{L}_\mathcal{S}  (h, \textbf{t}_{w_h}, \textbf{t}_{v_h}, \mathcal{X},\mathcal{Y})$ be a symmetric Rosenbrock strong linearization of $G(\lam)$ as given in Corollary~\ref{symm:GFPR}. Let $\mathbb{G}(\lam)$ be the associated transfer function of $\mathbb{L}(\lam)$. Then  $\mathbb{G}(\lam)$ is real and symmetric and has the same Cauchy-Maslov index as $G(\lam),$ that is, $\mathbf{Ind}_{\mathrm{CM}}(G) = \mathbf{Ind}_{\mathrm{CM}}(\mathbb{G}).$
\end{theorem}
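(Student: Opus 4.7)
The plan is to deduce the equality of Cauchy--Maslov indices from a real unimodular symmetric congruence between $\mathbb{G}(\lam)$ and $\diag(I_{(m-1)n}, G(\lam))$, followed by Sylvester's law of inertia applied pointwise at each real $\lam$. I would break the proof into three steps: (i) reality and symmetry of $\mathbb{G}(\lam)$; (ii) construction of the symmetric unimodular congruence; (iii) transferring the CM-index contributions across the congruence.

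For step (i), since $G(\lam)$ is real and symmetric, I would invoke the real minimal symmetric realization of $G(\lam)$ with real $B$ and real symmetric $A, E$ appearing in (\ref{symm:Slam}), and choose the matrix assignments $\mathcal{X}, \mathcal{Y}$ real and symmetric. Corollary~\ref{symm:GFPR} then guarantees that both $L(\lam):=L_P(h,\textbf{t}_{w_h},\textbf{t}_{v_h},\mathcal{X},\mathcal{Y})$ and the transfer function $\mathbb{G}(\lam)=L(\lam)+(e_{m-\alpha}\otimes B^T)(\lam E-A)^{-1}(e_{m-\alpha}^T\otimes B)$ are real and symmetric.

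For step (ii), I would produce a real unimodular matrix polynomial $V(\lam)\in \mathbb{R}[\lam]^{mn\times mn}$ satisfying $V(\lam)^T\mathbb{G}(\lam)V(\lam)=\diag(I_{(m-1)n},G(\lam))$. Expanding this identity, it decouples into the two conditions $V^T L V=\diag(I_{(m-1)n},P(\lam))$ together with the column identity $V(\lam)^T(e_{m-\alpha}\otimes I_n)=e_m\otimes I_n$ (the matching row identity follows by transposition). The first condition is met by the real symmetric unimodular reduction of the symmetric GFPR $L(\lam)$ of the symmetric polynomial $P(\lam)$ to its block-diagonal form, available from the structured-linearization theory underlying Theorem~\ref{thmgfprSymmCanonicalUnknowns_RCh4}. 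The column identity is the delicate point: I would verify it by tracking how the Gaussian-style eliminations of Proposition~\ref{prop_elimination} and the explicit block-triangular reduction used in the proof of Theorem~\ref{FP:Rosen:stro:lin} act on the block column indexed by $m-\alpha$, where $\alpha=i_0(\textbf{t}_{w_h},\textbf{w}_h)=c_0(\textbf{w}_h,\textbf{c}_{w_h},rev(\textbf{t}_{w_h}))$ is forced by the block-symmetry analysis of Theorem~\ref{lem:gfpr:blksymm}. The block-symmetric structure is precisely what ensures this column is mapped to $e_m\otimes I_n$ and the analogous row to $e_m^T\otimes I_n$ under the symmetric reduction.

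For step (iii), since $V$ is unimodular, $\det V$ is a nonzero real constant, hence $V(\lam_0)$ is real and invertible at every real $\lam_0$. By Sylvester's law of inertia applied pointwise, $\mathbb{G}(\lam_0)$ and $\diag(I_{(m-1)n},G(\lam_0))$ have the same signature at every finite real $\lam_0$ where both are defined. At a real pole $\lam_0$, a Laurent expansion together with Sylvester applied to the principal parts (whose residue matrices are related by real congruence via $V(\lam_0)$) shows that the signed counts of eigenvalue trajectories jumping from $-\infty$ to $+\infty$ and from $+\infty$ to $-\infty$ at $\lam_0$ are the same for $\mathbb{G}$ and for $\diag(I_{(m-1)n},G)$. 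Summing these contributions across all real poles, and noting that the identity summand $I_{(m-1)n}$ contributes $0$, yields $\mathbf{Ind}_{\mathrm{CM}}(\mathbb{G})=\mathbf{Ind}_{\mathrm{CM}}(G)$. The main obstacle is step (ii): specifically, verifying the column identity from the symmetric GFPR construction; once this symmetric unimodular congruence is in hand, steps (i) and (iii) are routine consequences of the block-symmetric structure and of Sylvester's law, respectively.
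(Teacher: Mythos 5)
Your step (ii) is a genuine gap, and it is where your proof would break down. You propose to produce a \emph{single} real unimodular $V(\lam)$ with $V(\lam)^T\,\mathbb{G}(\lam)\,V(\lam)=\diag(I_{(m-1)n},G(\lam))$, i.e., a unimodular \emph{congruence}. The paper only establishes (via Definition~\ref{stln}(a) and the results of \cite{rafinami1,rafinami3}) a two-sided unimodular \emph{equivalence} $U(\lam)\,\mathbb{G}(\lam)\,V(\lam)=\diag(I_{(m-1)n},G(\lam))$ where $U$ and $V$ are independent and, in the constructions used (e.g., the $L(\lam)$, $U(\lam)$ of Proposition~\ref{prop_elimination} and the $\widehat{U}$, $\widehat{V}$ in the proof of Theorem~\ref{FP:Rosen:stro:lin}), are \emph{not} transposes of each other. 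Block-symmetry of $L(\lam)$ and of $\mathbb{L}(\lam)$ (Theorems~\ref{thmgfprSymmCanonicalUnknowns_RCh4} and~\ref{lem:gfpr:blksymm}) concerns the block structure of the pencil itself; it does not supply a unimodular $V$ with $V^TLV=\diag(I_{(m-1)n},P)$. Such a congruence would force, by Sylvester's law at every real $\lam_0$, the inertia of $L(\lam_0)$ to equal $(m-1)n$ plus the inertia of $P(\lam_0)$, which is a sign-characteristic--type constraint that is not automatic for symmetric linearizations and certainly is not proved anywhere in the paper. You flag this as ``the delicate point,'' but as written it is an unsupported assertion, and the remaining column-identity condition $V(\lam)^T(e_{m-\alpha}\otimes I_n)=e_m\otimes I_n$ is likewise unverified.

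The paper avoids congruence altogether with a much more economical argument that you should compare against. Since $e_{m-\alpha}\otimes B^T=(e_{m-\alpha}\otimes I_n)B^T$, one has
\[
(e_{m-\alpha}\otimes B^T)(\lam E-A)^{-1}(e_{m-\alpha}^T\otimes B)=(e_{m-\alpha}e_{m-\alpha}^T)\otimes G_{sp}(\lam),
\]
so $\mathbb{G}(\lam)=L(\lam)+\diag\bigl(0,\ldots,0,G_{sp}(\lam),0,\ldots,0\bigr)$ with $G_{sp}$ sitting in the $(m-\alpha)$-th diagonal block. Because $L(\lam)$ is a pencil (hence a matrix polynomial) and the Cauchy--Maslov index is invariant under perturbation by a matrix polynomial, $\mathbf{Ind}_{\mathrm{CM}}(\mathbb{G})=\mathbf{Ind}_{\mathrm{CM}}\bigl(\diag(0,\ldots,G_{sp},\ldots,0)\bigr)=\mathbf{Ind}_{\mathrm{CM}}(G_{sp})=\mathbf{Ind}_{\mathrm{CM}}(G)$, the last step again using invariance since $G=P+G_{sp}$. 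Your step (i) agrees with the paper, and your step (iii) (pointwise Sylvester at real poles) would be fine \emph{if} step (ii) held, but the missing congruence is precisely the nontrivial content your outline does not deliver; the paper's observation that the strictly proper part of $\mathbb{G}$ is literally a one-block embedding of $G_{sp}$ sidesteps the need for it entirely.
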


\begin{proof} By Corollary~\ref{symm:GFPR} we have $\mathbb{G} (\lam)  =  L(\lam) +  (e_{m-\alpha} \otimes B^T ) ( \lam E - A)^{-1} (e^T_{m-\alpha} \otimes B) $ is symmetric, where $\alpha : = i_0(\textbf{t}_{w_h},\textbf{w}_h)$ and $L(\lam)$ are as given in  Corollary~\ref{symm:GFPR}.

	Next, we show that $\mathbf{Ind}_{\mathrm{CM}}(G) = \mathbf{Ind}_{\mathrm{CM}}(\mathbb{G}).$ Set $G_{sp}(\lam) : = B^T (\lam E -A )^{-1} B$. Then we have $G(\lam) = P(\lam) + G_{sp}(\lam) $ and
	\begin{align}
	\mathbb{G} (\lam) & =  L(\lam) +  (e_{m-\alpha} \otimes B^T ) ( \lam E - A)^{-1} (e^T_{m-\alpha} \otimes B) \nonumber \\
	& =  L(\lam) +  (e_{m-\alpha} \otimes I_n ) B^T( \lam E - A)^{-1}B (e^T_{m-\alpha} \otimes I_n)  \nonumber \\
	&= L(\lam) + \diag (0, \ldots, 0, \hspace{-1.6em} \underbrace{G_{sp}(\lam)}_{(m-\alpha) \text{-th position}} \hspace{-1.5em},  0, \ldots, 0) . \label{eqn:CM:G}
	\end{align}
Since $L(\lam)$ is a matrix pencil, it follows from (\ref{eqn:CM:G}) that the contribution in $\mathbf{Ind}_{\mathrm{CM}}(\mathbb{G})$ comes only from
$\diag (0, \ldots, 0, G_{sp}(\lam),  0, \ldots, 0).$ Hence we have
\begin{align*}
\mathbf{Ind}_{\mathrm{CM}}(\mathbb{G}) &= \mathbf{Ind}_{\mathrm{CM}} \big (\diag (0, \ldots, 0, G_{sp},  0, \ldots, 0) \big)\\
&= \mathbf{Ind}_{\mathrm{CM}} \big (G_{sp} \big) = \mathbf{Ind}_{\mathrm{CM}}(G),
\end{align*} where the last equality follows from the fact that the Cauchy-Maslov index is invariant under perturbation by a matrix polynomial. This completes the proof.
\end{proof}

\begin{remark} Although the Cauchy-Maslov index is defined for real symmetric rational matrices, it can be extended to Hermitian rational matrices.
\end{remark}

\begin{remark} Let $G(\lam) = P(\lam) + B^T (\lam E -A)^{-1} B$ be symmetric, where $P(\lam) = \sum^m_{j=0}A_j \lam^j$  and $m>1$. Then the construction given in  \cite[Theorem~5.3]{rafiran1} generates only one symmetric linearization of $G(\lam)$ which is explicitly given by
	$$ \mathbb{T}(\lambda):= {\small \lambda \left[\arraycolsep=1 pt \begin{array}{rlccc|c}
		& &  &&A_m &  \\
		& &  & \rotatebox{40}{$\cdots$}  & A_{m-1} &\\
		& &   &\rotatebox{45}{$\cdots$}   & \vdots&\\
		& {}^{\rotatebox{40}{$\cdots$}}  & {}^{\rotatebox{45}{$\cdots$}  }  & & \vdots &\\
		A_m~ & A_{m-1}& ~\cdots & & A_1  &\\
		\hline
		&&&&& - E\\
		\end{array}\right] +
		\left[\arraycolsep=1 pt\def\arraystretch{1.4}  \begin{array}{rllcc|c}
		& & & -A_m & &  \\
		&  & {\rotatebox{32}{$\cdots$}}~&-A_{m-1}& & \\
		&{\rotatebox{32}{$\cdots$}}  & {\rotatebox{32}{$\cdots$}} & \vdots & & \\
		-A_m~ & -A_{m-1}& ~\cdots & -A_2  & &\\
		& & & & A_0& B^T\\
		\hline
		&& &  &B& A\\
		\end{array}\right]}.$$
 Further, the pencil  $\mathbb{T}(\lambda)$ is a  Rosenbrock strong linearization of $G(\lambda)$ if and only if  $A_m$ is nonsingular \cite[Theorem~5.3]{rafiran1}. By contrast, the family of  GFPRs enables us to construct an infinite number of symmetric strong linearizations of   $G(\lam)$. In fact, by considering $h =0$, $ \textbf{t}_{w_h} = \emptyset$ and $ \textbf{t}_{v_h} = -m + (0:m-3, 0:m-5, \ldots )$, we have $ \mathbb{L}_\mathcal{S}  (h, \textbf{t}_{w_h}, \textbf{t}_{v_h}, \mathcal{X}, \mathcal{Y}) = \mathbb{T} (\lam)$, where $\mathcal{X}$ and $\mathcal{Y}$ are the trivial matrix assignments.
\end{remark}

\subsection{Hamiltonian linearizations}  Recall that a rational matrix $G(\lam)$ is said to be Hamiltonian (i.e., $T$-even) if $G(- \lam)^T = G(\lam)$. Since $G(\lam) = P(\lam) + G_{sp}(\lam)$, it follows that if $ G(\lam)$ is $T$-even then both  $ P(\lam)$ and $ G_{sp}(\lam)$ are $T$-even.  We now construct $T$-even Rosenbrock strong linearizations of $G(\lam)$. We proceed as follows.

For the rest of the paper,  we define  $ J:= \left[ \begin{array}{@{}cc@{}} 0 & I_{\ell} \\- I_{\ell} & 0 \end{array} \right]$ when $r = 2 \ell$. Note that $J^T = J^{-1} = -J.$ Further, we define $\mathbb{J}_{k,r} := \diag (I_{k}, J)$ for any integer $ k \geq 1$ when $ r = 2 \ell.$

\begin{definition}\cite{hilscot} A matrix $X \in \mathbb{C}^{r}$ with $ r := 2\ell$ is said to be  Hamiltonian (resp., skew-Hamiltonian)  if $ JX$ is symmetric (resp., $JX$ is skew-symmetric), that is, $(JX)^T = JX$ (resp., $(JX)^T = -JX$).
	\end{definition}

If $X$ is  Hamiltonian then $(JX)^T = JX \Rightarrow (XJ)^T = XJ$. Similarly, if $X$ is  skew-Hamiltonian  then we have $(XJ)^T = -XJ$.


\begin{definition} Let $G(\lam)$ be a Hamiltonian (i.e., $T$-even) rational matrix.
 \begin{itemize}

\item[(a)] A realization of $G(\lam)$ of the form  $G(\lam) =P(\lam) + C (\lam I_r - A)^{-1}B$  is said to be a Hamiltonian realization of $G(\lam)$ if $P(\lam)$ is $T$-even, $A$ is Hamiltonian with $ r = 2\ell$ and  $JB = C^T.$

\item[(b)] A system matrix $\mathcal{S}(\lam)$ of the form  $\mathcal{S}(\lam) : =
		\left[ \begin{array}{@{}c|c@{}}
		P(\lam) &  C \\ \hline  B & A - \lam I_r
		\end{array}
		\right] $  is said to be a Hamiltonian system matrix if  $ r = 2\ell$ and  $\mathbb{J}_{n,r} \mathcal{S} (\lam)$ is $T$-even, that is, if  $\big (\mathbb{J}_{n,r} \, \mathcal{S}  (-\lam) \big )^T = \mathbb{J}_{n,r} \, \mathcal{S}  (\lam)$, where $\mathbb{J}_{n,r}: = \diag (I_n , J)$.

\item[(c)] A realization of $G(\lam)$ of the form  $G(\lam) =P(\lam) + C (\lam E - A)^{-1}B$ with $E$ being nonsingular is said to be a $T$-even realization of $G(\lam)$ if $ C= B^T$ and both $P(\lam)$ and $ \lam E - A $ are  $T$-even.

\end{itemize}
\end{definition}

 Note that the system matrix $\mathcal{S}(\lam)$ associated with a  $T$-even realization of $G(\lam)$ is $T$-even, that is, $ \mathcal{S}(-\lam)^T = \mathcal{S}(\lam).$

 \begin{remark}
Observe that  $G(\lam) =P(\lam) + C (\lam I_r - A)^{-1}B$  is a Hamiltonian realization of $G(\lam)$  if and only if  $\mathcal{S}(\lam) : =
		\left[ \begin{array}{@{}c|c@{}}
		P(\lam) &  C \\ \hline  B & A - \lam I_r
		\end{array}
		\right] $  is a Hamiltonian system matrix of $G(\lam).$ On the other hand, $G(\lam) =P(\lam) + C (\lam E - A)^{-1}B$ is a $T$-even realization of $G(\lam)$ if and only if  $\mathcal{S}(\lam) : =
		\left[ \begin{array}{@{}c|c@{}}
		P(\lam) &  C \\ \hline  B & A - \lam E
		\end{array}
		\right] $ is a $T$-even system matrix of $G(\lam).$

\end{remark}

For convenience, we often refer to $\mathcal{S}(\lam)$ as a $T$-even (resp., Hamiltonian) realization of $G(\lam)$ when $\mathcal{S}(\lam)$ is $T$-even (resp., Hamiltonian).

\begin{proposition} \label{prop:hamil} Suppose that  $G(\lam)$  is  Hamiltonian (i.e., $T$-even). Then we have  the following:
	\begin{itemize}
		\item[(a)] There exists a minimal Hamiltonian realization of $G(\lam)$ of the form  $G(\lam) = P(\lam) + C (\lam I_r - A)^{-1}B$  with $ r = 2 \ell$ and $JB = C^T.$   Thus the associated system matrix
		$\mathcal{S} (\lam) : =
		\left[ \begin{array}{@{}c|c@{}}
		P(\lam) &  B^T J^T\\ \hline  B & A - \lam I_r
		\end{array}
		\right] $  is Hamiltonian.

		\item[(b)] There exists a minimal $T$-even realization of $G(\lam)$ of the form $G(\lam) = P(\lam) + B^T (\lam E- A)^{-1}B$.  Thus the  system matrix
		$\mathcal{S} (\lam)  =
		\left[ \begin{array}{@{}c|c@{}}
		P(\lam) &  B^T\\ \hline  B & A - \lam E
		\end{array}
		\right]$  is $T$-even.
	\end{itemize}
\end{proposition}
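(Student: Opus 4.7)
The plan is to reduce both parts to constructing minimal structured realizations of the strictly proper part $G_{sp}(\lam) := G(\lam) - P(\lam)$, since $P(\lam)$ is itself $T$-even and sits outside the state-space block. I may assume $G_{sp} \neq 0$, as the opposite case is trivial.

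For (a), I would start from any minimal realization $G_{sp}(\lam) = C(\lam I_r - A)^{-1}B$. The identity $G_{sp}(-\lam)^T = G_{sp}(\lam)$ rewrites as
\[ C(\lam I_r - A)^{-1}B = -B^T(\lam I_r + A^T)^{-1}C^T, \]
so $(A, B, C)$ and $(-A^T, C^T, -B^T)$ are two minimal realizations of the same rational matrix. The state-space isomorphism theorem yields a unique invertible $T$ with $TA = -A^T T$, $TB = C^T$, and $CT^{-1} = -B^T$ (equivalently $C^T = -T^T B$). Transposing $TA = -A^T T$ shows $T^T$ satisfies the same intertwining relation, so uniqueness forces $T^T = \alpha T$ with $\alpha = \pm 1$. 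Combining $TB = C^T$ with $C^T = -T^T B$ gives $(T + T^T)B = 0$; if $T^T = T$ held, invertibility of $T$ would force $B = 0$, contradicting $G_{sp} \neq 0$. Hence $T^T = -T$, so $T$ is skew-symmetric and invertible, forcing $r = 2\ell$. Picking an invertible $S$ with $S^T T S = J$ and setting $\tilde A := S^{-1}AS$, $\tilde B := S^{-1}B$, $\tilde C := CS$, the relation $TB = C^T$ becomes $J\tilde B = \tilde C^T$ and $TA = -A^T T$ becomes $J\tilde A + \tilde A^T J = 0$, i.e., $\tilde A$ is Hamiltonian. This gives the desired minimal Hamiltonian realization of $G(\lam)$.

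For (b), using the minimal Hamiltonian realization from (a), set $E := J$ and $A' := AJ$. Since $\lam E - A' = (\lam I_r - A)J$, and using $J^T = J^{-1} = -J$ together with $C = B^T J^T$,
\[ B^T(\lam E - A')^{-1}B = B^T J^T(\lam I_r - A)^{-1}B = C(\lam I_r - A)^{-1}B = G_{sp}(\lam). \]
The pencil $\lam E - A'$ is $T$-even: $E^T = -E$, and Hamiltonicity of $A$ (i.e., $JA + A^T J = 0$) yields $(A')^T = J^T A^T = -JA^T = AJ = A'$, so $(-\lam E - A')^T = \lam E - A'$. Nonsingularity of $E = J$ is immediate. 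Minimality transfers from (a) because the Rosenbrock rank matrices of the new realization are obtained from those of the Hamiltonian realization by postmultiplying the state columns (or premultiplying the state rows) by the invertible $J$, which does not change rank.

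The main obstacle is the sign determination $T^T = -T$ in part (a); once that is secured by the controllability/observability argument above, both parts reduce to direct algebraic verifications, with (b) essentially a change-of-variable trick that repackages the Hamiltonian realization as a descriptor realization with $E = J$.
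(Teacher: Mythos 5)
Your part (b) follows the same route as the paper: set $E := J$, redefine $A := AJ$, check that $\lam E - A$ is $T$-even using $J^{-1} = J^T = -J$ and Hamiltonicity of $A$, and carry minimality over from (a). The genuine difference is in part (a). The paper simply cites a reference for the existence of a minimal Hamiltonian realization of the strictly proper $T$-even part $G_{sp}(\lam)$, whereas you derive that existence from scratch: write $G_{sp}(-\lam)^T = G_{sp}(\lam)$ as the assertion that $(A,B,C)$ and $(-A^T, C^T, -B^T)$ are two minimal realizations of the same transfer function, obtain the unique state-space isomorphism $T$, show $T$ is skew-symmetric, and normalize $T$ to $J$ by a congruence. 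This makes the proposition self-contained and is a useful complement to the paper's citation-based argument; what the paper buys by citing is brevity, and what you buy is that the reader sees where the symplectic structure comes from. One step in your argument is stated imprecisely: the claim that ``uniqueness forces $T^T = \alpha T$'' does not follow from the single intertwining relation $T^T A = -A^T T^T$ (that relation alone does not determine a matrix up to scalar). The clean version is to observe that $-T^T$ satisfies \emph{all three} conditions of the state-space isomorphism between $(A,B,C)$ and $(-A^T, C^T, -B^T)$ — indeed from $TAT^{-1}=-A^T$, $TB=C^T$, $CT^{-1}=-B^T$ one checks $T^T A T^{-T} = -A^T$, $(-T^T)B = C^T$, and $C(-T^T)^{-1} = -B^T$ — so that uniqueness of the isomorphism for minimal realizations gives $T = -T^T$ directly, and your identity $(T+T^T)B = 0$ becomes a consistency check rather than the load-bearing step. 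With that adjustment the proof is correct.
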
	

\begin{proof}  Since $G(\lam) = P(\lam)+G_{sp}(\lam)$ is $T$-even, we have $P(\lam)$ and $G_{sp}(\lam)$ are $T$-even. Also since $G_{sp}(\lam)$ is strictly proper and $T$-even, there exists a minimal Hamiltonian realization of $G_{sp}(\lam)$ of the form $G_{sp}(\lam) =  C (\lam I_r - A)^{-1} B$ with $ r = 2\ell$ and $ JB = C^T$; see~\cite{hilscot}. Hence  $G(\lam) = P(\lam) + C (\lam I_r - A)^{-1} B$ is a minimal Hamiltonian realization of $G(\lam).$
Obviously the system matrix $\mathcal{S} (\lam)$ is Hamiltonian, that is, $  (\mathbb{J}_{n,r} \, \mathcal{S} (- \lam))^T = \mathbb{J}_{n,r} \,\mathcal{S} (\lam),$ where $\mathbb{J}_{n,r} := \diag (I_n,  J)$. This proves (a).

The results in (b) follow from (a). Indeed, by part (a) we have $G(\lam) = P(\lam) + B^T J^T (\lam I_r - A)^{-1} B = P(\lam) + B^T( \lam J - AJ)^{-1} B.$  Since $A$ is Hamiltonian, it follows that $\lam J - A J$ is $T$-even. Hence setting  $E := J$ and redefining $A := AJ$, it follows that $G(\lam): = P(\lam) + B^T (\lam E- A)^{-1}B$ is a minimal $T$-even realization of $G(\lam).$
Evidently, the system matrix $\mathcal{S} (\lam)$ is $T$-even, that is, $\mathcal{S} (-\lam)^T = \mathcal{S} (\lam)$. This proves (b).
\end{proof}

We construct $T$-even (resp., Hamiltonian) linearizations of $G(\lam)$ corresponding to a $T$-even (resp., Hamiltonian) realization of $G(\lam).$ We proceed as follows.

\begin{definition} \cite{bfs2} A matrix $Q \in \mathbb{C}^{mn \times mn}$ is said to be a quasi-identity matrix if $Q = \epsilon_1 I_n \oplus \cdots \oplus \epsilon_m I_n $, where  $ \epsilon_i  \in \{\pm 1 \}$ for $i=1:m.$ We refer to $\epsilon_j$, $j=1:m$, as the $j$-th parameter of $Q$.
\end{definition}

We need the following result which is a particular case of \cite[Theorem 4.15]{bfs2}.

\begin{theorem}[\cite{bfs2}, Theorem 4.15] \label{eventhm}   Let $0 \leq h \leq m-1$ be even. Let $\textbf{w}$  be the simple admissible tuple of $\{ 0 : h\}$ and $\textbf{c}_{w}$ be the symmetric complement of $\textbf{w}$. Let $\textbf{z} +m$ be any admissible tuple of $\{0: m-h-1\}$ and $\textbf{c}_{z} +m$ be the symmetric complement of $\textbf{z} +m$. Let
	$ L(\lambda) :=  \big( \lambda M^P_{\textbf{z}} - M^P_{\textbf{w}} \big)  M^P_{\textbf{c}_w} M^P_{\textbf{c}_z} .$ Then, up to multiplication by $-1$, there exists a unique quasi-identity matrix $Q$ such that $QL(\lambda)$ is $T$-even (resp., $T$- odd) when $P(\lambda)$ is $T$-even (resp., $T$-odd).
\end{theorem}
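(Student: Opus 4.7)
The plan is to combine block-symmetry of $L(\lambda)$ with a careful sign-tracking argument exploiting the $T$-even structure $A_i^T=(-1)^iA_i$. First, I would show that with the stated choice of admissible tuples and their symmetric complements, the pencil $L(\lambda)$ is block-symmetric, i.e. $L(\lambda)^{\mathcal{B}}=L(\lambda)$. This should follow from the same combinatorial pattern used in Theorem~\ref{thmgfprSymmCanonicalUnknowns_RCh4}: the admissible tuple $\textbf{w}$ together with its symmetric complement $\textbf{c}_w$ is designed precisely so that the product $M^P_{\textbf{w}}M^P_{\textbf{c}_w}$ (and analogously $M^P_{\textbf{z}}M^P_{\textbf{c}_z}$) is block-symmetric. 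Once block-symmetry of $L(\lambda)$ is in place, the condition $(QL(-\lambda))^T=QL(\lambda)$ reduces to $L(-\lambda)^{\mathcal{B}}\,Q=Q\,L(\lambda)$, which is a statement about how a quasi-identity conjugates $L(\lambda)$ into its $\lambda\mapsto-\lambda$ image under ordinary (not block) transpose.

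Second, I would analyse the effect of combining the block transpose $\mathcal{B}$ with the ordinary entrywise transpose and with $\lambda\mapsto-\lambda$. Each elementary factor $M_i(-A_i)$ or $M_{-i}(A_i)$, when entrywise-transposed, produces a block whose only non-identity entry is $A_i^T=(-1)^iA_i$; the substitution $\lambda\mapsto-\lambda$ toggles the signs of the pencil part. I would show by direct computation on each elementary factor that, at the block level, these two operations together act as conjugation by a signature matrix of the form $\epsilon_1I_n\oplus\cdots\oplus\epsilon_mI_n$, where the signs $\epsilon_j$ are determined by the parities of the positions shifted by the factor. Multiplying through the full product for $L(\lambda)$, the cumulative effect is conjugation by some quasi-identity $Q_0$.

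Third, I would read off $Q$ from the requirement $L(-\lambda)^{\mathcal{B}}\,Q=Q\,L(\lambda)$: the block-by-block sign compatibility forces each $\epsilon_j$ to be determined recursively as one moves along the block diagonal, with only a single binary choice remaining (the overall sign), which corresponds exactly to the stated ``up to multiplication by $-1$'' freedom. An induction along the admissible-tuple decomposition $\mathrm{csf}(\textbf{w})=(h{-}1{:}h,h{-}3{:}h{-}2,\ldots,0{:}p)$ is the natural organising device, peeling off one pair of indices at a time and invoking the commutativity relation $M_i(X)M_j(Y)=M_j(Y)M_i(X)$ whenever $||i|-|j||>1$ to keep the factor groups separated.

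The main obstacle I anticipate is the parity bookkeeping: the signs $\epsilon_j$ must be consistent across all four index tuples $\textbf{w},\textbf{c}_w,\textbf{z},\textbf{c}_z$ simultaneously, and a mismatch between the parities coming from the ``left half'' $(\textbf{w},\textbf{c}_w)$ and the ``right half'' $(\textbf{z},\textbf{c}_z)$ would kill existence of any $Q$. This is precisely why the theorem requires $h$ to be \emph{even} (so that both halves have compatible parity structure for the simple admissible tuple $\textbf{w}$), and why it fails for odd $h$ in the block-symmetric GFPR analysis earlier in the paper. The $T$-odd case should follow by the same argument with the signs $(-1)^i$ replaced by $(-1)^{i+1}$ throughout, altering $Q$ but not the overall scheme.
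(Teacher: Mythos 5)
The statement you are trying to prove is imported from Bueno--Furtado \cite[Theorem~4.15]{bfs2}: the present paper never proves it, but cites the reference and points to \cite[Algorithm~4.14]{bfs2} for the construction of $Q$. So there is no proof in this paper to compare against. I can only assess your sketch on its own terms.

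Your overall plan --- establish that $L(\lambda)$ is block-symmetric, then track parities to find a quasi-identity $Q$ making $(QL(-\lambda))^T = \pm QL(\lambda)$ --- is a reasonable route, and it is consistent with the role the symmetric complements $\textbf{c}_w,\textbf{c}_z$ play throughout the paper. However, there are two concrete problems in the reduction step. First, the equation $(QL(-\lambda))^T = QL(\lambda)$ expands (using $Q^T=Q$) to $L(-\lambda)^T Q = Q L(\lambda)$, with an ordinary transpose. Block-symmetry $L^{\mathcal{B}}=L$ says $L_{ij}=L_{ji}$; it does \emph{not} let you replace $L^T$ by $L^{\mathcal{B}}$. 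What it buys you is the identity $(L^T)_{ij}=(L_{ji})^T=(L_{ij})^T$, i.e.\ $L^T$ is the \emph{entrywise} block transpose of $L$, so the condition becomes the blockwise system $(L(-\lambda)_{ij})^T\,\epsilon_j=\epsilon_i\,L(\lambda)_{ij}$. Writing the condition as $L(-\lambda)^{\mathcal{B}}Q=QL(\lambda)$ (which, under block-symmetry, collapses to the transpose-free identity $L(-\lambda)Q=QL(\lambda)$) discards exactly the $T$-even content $A_k^T=(-1)^kA_k$ that the whole argument hinges on. Second, your justification of block-symmetry by appeal to Theorem~\ref{thmgfprSymmCanonicalUnknowns_RCh4} is not quite right as stated: that theorem takes \emph{both} $\textbf{w}_h$ and $\textbf{v}_h+m$ to be simple admissible tuples, whereas Theorem~\ref{eventhm} allows $\textbf{z}+m$ to be an arbitrary admissible tuple. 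The conclusion is still true (one can check it using the commutation relation $\mathbb{M}_i\mathbb{M}_j=\mathbb{M}_j\mathbb{M}_i$ for $||i|-|j||>1$ and the block-symmetry of $M^P_{(\textbf{w},\textbf{c}_w)}$, $M^P_{(\textbf{z},\textbf{c}_z)}$, $M^P_{\textbf{c}_w}$, $M^P_{\textbf{c}_z}$ separately), but that is a distinct, slightly stronger lemma than the one you invoke. Beyond these two points, steps two and three of your outline --- showing that every nonzero block of $L(\lambda)$ transforms under $\lambda\mapsto-\lambda$ and entrywise transpose by a fixed sign $c_{ij}\in\{\pm1\}$, and that these signs have the rank-one consistency pattern $c_{ij}=\epsilon_i\epsilon_j$ --- are precisely where the work lies and are only gestured at; the uniqueness ``up to $-1$'' rests on the connectivity of the support pattern of $L$, which you do not address.
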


We refer to \cite[Algorithm 4.14]{bfs2} for more on the construction of the quasi-identity matrix $Q.$  The next result provides $T$-even linearizations of $G(\lam)$.

\begin{theorem} \label{thm:Hamil:G} Let $G(\lam)$ be $T$-even and
	$\mathcal{S} (\lam)$ be a $T$-even realization of $G(\lam)$ as given in Proposition~\ref{prop:hamil}(b). Let $h, \textbf{w}, \textbf{c}_w, \textbf{z} $ and $ \textbf{c}_z$ be as in Theorem~\ref{eventhm}.  Consider the GFPR  $\mathbb{L}(\lambda) := \big( \lambda \mathbb{M}^\mathcal{S}_{\textbf{z}} - \mathbb{M}^\mathcal{S}_{\textbf{w}} \big)  \mathbb{M}^P_{\textbf{c}_w}  \mathbb{M}^P_{\textbf{c}_z} $ associated with $\mathcal{S}(\lam)$. Then  there exists a unique quasi-identity matrix $\mathbb{Q} := \diag( \mathbf{s}\, Q, I_r) $  such that  	$$ \mathbb{Q}\mathbb{L} (\lambda)   =
	\left[
	\begin{array}{@{}c|c@{}}
	\mathbf{s} \,Q L(\lam) &  e_{m- i_0(\textbf{w})} \otimes B^T \\[.1em] \hline \\[-1em]
	e^T_{m- i_0(\textbf{w})} \otimes B  & A - \lam E \\
	\end{array}
	\right] $$  is  $T$-even, where $Q$ and $L(\lam)$ are as in Theorem~\ref{eventhm} and $\mathbf{s}$ is the $(m - i_0(\textbf{w}))$-th parameter of $Q$.

Assume that  $Ind(\textbf{z} +m) =0$ when the leading coefficient of $P(\lam)$ is singular.  Then $\mathbb{Q} \mathbb{L}(\lam)$ is a  Rosenbrock strong linearization of  $G(\lam)$.
The transfer function $\mathbb{G}(\lam) : = \mathbf{s} \, Q L(\lam) +  (e_{m-i_0(\textbf{w})} \otimes B^T ) ( \lam E - A)^{-1} (e^T_{m-i_0(\textbf{w})} \otimes B) $ of  $\mathbb{Q} \mathbb{L}(\lam)$ is $T$-even.	
\end{theorem}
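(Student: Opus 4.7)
The plan is to reduce everything to two calculations already available to us: Theorem~\ref{gfprptoge_RCh4}, which converts a GFPR of $P(\lam)$ into a GFPR of $G(\lam)$ via a bordering, and Theorem~\ref{eventhm}, which supplies the quasi-identity matrix $Q$ turning $L(\lam)$ into a $T$-even pencil. First I would specialize Theorem~\ref{gfprptoge_RCh4} with $\sigma=\textbf{w}$, $\tau=\textbf{z}$, $\sigma_1=\tau_1=\emptyset$, $\sigma_2=\textbf{c}_w$, $\tau_2=\textbf{c}_z$ and trivial matrix assignments, yielding
\[
\mathbb{L}(\lam)=\left[\begin{array}{c|c} L(\lam) & e_{m-i_0(\textbf{w})}\otimes C \\ \hline e_{m-c_0(\textbf{w},\textbf{c}_w)}^T\otimes B & A-\lam E\end{array}\right].
\]
The main combinatorial step is then to verify $i_0(\textbf{w})=c_0(\textbf{w},\textbf{c}_w)$. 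For $h=0$ both are $0$, and for $h\ge 2$ even the tuple $\textbf{w}=(h-1,h,h-3,h-2,\ldots,1,2,0)$ has each integer appearing exactly once with $2$ occurring \emph{after} $1$, so $(1,0)$ is a subtuple of $\textbf{w}$ but $(2,1,0)$ is not, giving $i_0(\textbf{w})=1$; an analogous scan of $(\textbf{w},\textbf{c}_w)$, whose tail is $(\ldots,1,2,0,h-1,h-3,\ldots,3,1)$, produces $(0,1)$ as a subtuple but not $(0,1,2)$, so $c_0=1$ as well. Using $C=B^T$ from the $T$-even realization gives the desired symmetric bordering.

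Next I would apply $\mathbb{Q}=\diag(\mathbf{s}\,Q,I_r)$ from the left. Because $Q=\epsilon_1 I_n\oplus\cdots\oplus\epsilon_m I_n$ and $\mathbf{s}=\epsilon_{m-i_0(\textbf{w})}$, we have
\[
\mathbf{s}\,Q\bigl(e_{m-i_0(\textbf{w})}\otimes B^T\bigr)=\mathbf{s}\,\epsilon_{m-i_0(\textbf{w})}\bigl(e_{m-i_0(\textbf{w})}\otimes B^T\bigr)=\mathbf{s}^2\bigl(e_{m-i_0(\textbf{w})}\otimes B^T\bigr)=e_{m-i_0(\textbf{w})}\otimes B^T,
\]
which is precisely what is needed for the displayed form of $\mathbb{Q}\mathbb{L}(\lam)$. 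The role of this specific choice of $\mathbf{s}$ is exactly to cancel the sign that $Q$ would otherwise introduce on the off-diagonal block, preserving the $(\,\cdot\,)^T$-correspondence between the top-right and bottom-left borders; this observation also yields the claimed uniqueness (replacing $Q$ by $-Q$ flips $\mathbf{s}$, so $\mathbf{s}\,Q$ is unchanged).

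For the $T$-even property I would check $(\mathbb{Q}\mathbb{L}(-\lam))^T=\mathbb{Q}\mathbb{L}(\lam)$ blockwise. The $T$-even hypothesis on the realization forces $A^T=A$ and $E^T=-E$, giving $(A+\lam E)^T=A-\lam E$; the off-diagonal blocks are constant and $(e^T_{m-i_0(\textbf{w})}\otimes B)^T=e_{m-i_0(\textbf{w})}\otimes B^T$, so they swap correctly under block-transpose; and the top-left block $\mathbf{s}\,Q L(\lam)$ is $T$-even by Theorem~\ref{eventhm}. Combining these finishes the structural half of the theorem.

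For the strong linearization half, I would invoke Theorem~\ref{thm:gfpr:RSL} after checking nonsingularity of the trivial matrix assignments on $\sigma_2=\textbf{c}_w$ and $\tau_2=\textbf{c}_z$. Since $\textbf{c}_w\subset\{1,3,\ldots,h-1\}$ contains no $0$, its trivial assignment is automatically nonsingular; for $\textbf{c}_z$, Remark~\ref{oddeven_deg} tells us $-m\in\textbf{c}_z$ iff $\mathrm{Ind}(\textbf{z}+m)\ge 1$, so under the stated hypothesis the trivial assignment at $-m$ is only needed when $A_m$ is nonsingular. Hence $\mathbb{L}(\lam)$ is a Rosenbrock strong linearization, and left-multiplication by the constant invertible matrix $\mathbb{Q}$ preserves this property (absorb $\mathbb{Q}^{-1}$ into the unimodular $U(\lam)$ and biproper $\mathcal{O}_\ell(\lam)$ in Definition~\ref{stln}). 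The transfer function claim then follows by the same computation as in the structural step:
\[
\mathbb{G}(-\lam)^T=(\mathbf{s}\,Q L(-\lam))^T+(e_{m-i_0(\textbf{w})}\otimes B^T)(-\lam E^T-A^T)^{-1}(e^T_{m-i_0(\textbf{w})}\otimes B)=\mathbb{G}(\lam),
\]
using $E^T=-E$, $A^T=A$ and Theorem~\ref{eventhm}. The only genuinely nontrivial step in the whole argument is the combinatorial identity $i_0(\textbf{w})=c_0(\textbf{w},\textbf{c}_w)$; once that is in hand, everything else is routine block algebra.
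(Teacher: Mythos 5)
Your proposal is correct and follows essentially the same line of argument as the paper: apply Theorem~\ref{gfprptoge_RCh4} to obtain the bordered block form, verify the combinatorial identity $i_0(\textbf{w})=c_0(\textbf{w},\textbf{c}_w)$, left-multiply by $\mathbb{Q}=\diag(\mathbf{s}Q,I_r)$ and observe the sign cancellation $\mathbf{s}Q(e_{m-\alpha}\otimes I_n)=e_{m-\alpha}\otimes I_n$, deduce $T$-evenness blockwise, and check via Theorem~\ref{thm:gfpr:RSL} that the relevant matrix assignments are nonsingular. Your explicit argument for uniqueness of $\mathbb{Q}$ (that flipping $Q\to -Q$ flips $\mathbf{s}$ and leaves $\mathbf{s}Q$ fixed) is a small but welcome addition that the paper leaves implicit.
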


\begin{proof} By Theorem~\ref{gfprptoge_RCh4}, we have
	$$ \mathbb{L}(\lambda)  =
	\left[
	\begin{array}{@{}c|c@{}}
	L(\lam) &  e_{m-i_0(\textbf{w})} \otimes B^T \\[.1em] \hline \\[-1em]
	e^T_{m-c_0( \textbf{w}, \textbf{c}_w)} \otimes B  &  A - \lam E   \\
	\end{array}
	\right],$$
	where $L(\lam)$ is as given in Theorem~\ref{eventhm}. Since $h$ is even and $ \textbf{w}$ is the simple admissible tuple of $\{0:h\}$, we have $\textbf{w} = ( h-1:h,\ldots,3:4,1:2,0)  $ and $ \textbf{c}_w = (h-1, h-3, \ldots, 1)$. 	 This implies that $i_0(\textbf{w}) =  c_0( \textbf{w}, \textbf{c}_w) = 	0 $  if $h=0$, and $i_0(\textbf{w}) =  c_0( \textbf{w}, \textbf{c}_w) = 1$  if $h > 0$.
	By  Theorem~\ref{eventhm},  $\mathbf{s}\, Q L(\lam)$ is $T$-even. Set $\alpha: = i_0(\textbf{w})$. Then  $ Q (e_{m-\alpha} \otimes I_n ) = \mathbf{s} (e_{m-\alpha} \otimes I_n )$. Note that $\mathbf{s}\mathbf{s} =1 .$ Consequently, we have
	\begin{equation} \label{eqn:even:G}
	\mathbb{Q} \mathbb{L}(\lam)  =
	\left[
	\begin{array}{@{}c|c@{}}
	\mathbf{s} \, Q L(\lam) &  \mathbf{s} \, Q (e_{m-\alpha} \otimes B^T )\\ \hline \\[-1em]
	e^T_{m-\alpha} \otimes B  &  A - \lam E  \\
	\end{array}
	\right] =
	\left[
	\begin{array}{@{}c|c@{}}
	\mathbf{s} \, Q L(\lam) & e_{m-\alpha} \otimes B^T \\ \hline \\[-1em]
	e^T_{m-\alpha} \otimes B  &  A - \lam E  \\
	\end{array}
	\right].
	\end{equation}
	Since $	\mathbf{s}\, Q L(\lam)$  and $A - \lam E$ are $T$-even, it follows from (\ref{eqn:even:G}) that  $ \mathbb{Q} \mathbb{L}(\lam)$ is $T$-even.

Since $h$ is even, by Remark~\ref{oddeven_deg} we have  $ 0 \notin \textbf{c}_{w}$. This implies that the matrix assignment for $\textbf{c}_{w}$ is nonsingular. Hence by taking $ \sig : =\textbf{w}, \tau : = \textbf{z}, \sig_1 := \emptyset, \sig_2 := \textbf{c}_{w}$, $ \tau_1 := \emptyset$ and $ \tau_2 := \textbf{c}_{z}$, it follows from Theorem~\ref{thm:gfpr:RSL} that $ \mathbb{L}(\lam)$ is a Rosenbrock strong linearization of $G(\lam)$ if the matrix assignment for $\textbf{c}_{z}$ is nonsingular. If the leading coefficient of $P(\lam)$ is nonsingular then  the matrix assignment for $\textbf{c}_{z}$ is nonsingular. On the other hand, if the leading coefficient of $P(\lam)$ is singular and $Ind(\textbf{z} +m) =0$, then by Remark~\ref{oddeven_deg}, we have $0 \notin \textbf{c}_z +m \Rightarrow -m \notin \textbf{c}_z$. Hence the matrix assignment for $\textbf{c}_{z}$ is nonsingular. Thus, $\mathbb{Q} \mathbb{L}(\lam)$ is a $T$-even Rosenbrock strong linearization of  $G(\lam)$. Obviously the transfer function $\mathbb{G}(\lam)$ is $T$-even.
\end{proof}

\begin{remark}  Note that if $m$ is even then $ Ind(\mathbf{z}+m) >0$ because $h$ is always even. This implies that $ -m \in \mathbf{c}_z.$ Hence if the leading coefficient of $P(\lam)$ is singular then $\mathbb{QL}(\lam)$ in Theorem~\ref{thm:Hamil:G} is not a linearization of $G(\lam)$ as $\mathbb{M}^P_{\mathbf{c}_z}$ is singular.
\end{remark}

\begin{example} Let $G(\lam) :=\sum_{i=0}^5 \lambda^i A_i  + B^T(\lam E-A)^{-1}B$ be a $T$-even realization of $G(\lam)$ and $\mathcal{S} (\lam) $ be as in Proposition~\ref{prop:hamil}(b). Consider  the GFPR $\mathbb{L} (\lam)  =  (\lam \mathbb{M}^\mathcal{S}_{(-4:-3,-5)} - \mathbb{M}^\mathcal{S}_{(1:2,0)} ) \mathbb{M}^P_{1} \mathbb{M}^P_{-4}$ and $ \mathbb{Q} = \diag (I_n, I_n, -I_n, I_n, -I_n, I_r) $. Then
	$$ \mathbb{Q} \mathbb{L}(\lam) =  \left[ \begin{array}{ccccc|c}
	0 & -I_n & \lam I_n & 0 & 0 &0\\
	- I_n  & \lam A_{5} - A_{4} & \lam A_{4} & 0 & 0 &0\\
	- \lam I_n & - \lam A_{4} & - \lam A_{3} -A_{2} & -A_{1} & I_n&0\\
	0 & 0 & A_{1} & -\lam A_{1} + A_{0} & \lam I_n& B^T \\
	0 & 0 & I_n & - \lam I_n & 0 &0 \\ \hline
	0 & 0 & 0 & B & 0& A- \lam E \end{array}\right]$$
	is a $T$-even Rosenbrock strong linearization of $G(\lam)$.  Observe that $\mathbb{Q} \mathbb{L}(\lam)$ is a block penta-diagonal pencil.

Next, let $G(\lam) :=\sum_{i=0}^4 \lambda^i A_i  + B^T(\lam E-A)^{-1}B$ be a $T$-even realization. Consider  $\mathbb{L} (\lam)  :=  (\lam \mathbb{M}^\mathcal{S}_{(-4:-1)} - \mathbb{M}^\mathcal{S}_{0} ) \mathbb{M}^P_{(-4:-2,-4:-3,-4)}$ and $ \mathbb{Q} := \diag (I_n, -I_n, I_n, -I_n, I_r) $. Then
$$ \mathbb{Q} \mathbb{L} (\lam)  =  \left[ \begin{array}{cccc|c}
0 & 0 &  - A_4 & \lam A_4 &0\\
0 & A_4 & -\lam A_4 + A_3 & -\lam A_3 &0\\
- A_4 & \lam A_4-A_3 & \lam A_3 - A_2 & \lam A_2 &0\\
-\lam A_4 & - \lam A_3 &- \lam A_2 & -\lam A_1 - A_0 &  B^T\\ \hline \\[-1.1em]
0 & 0& 0& B & A- \lam E
\end{array}\right]$$ is a $T$-even Rosenbrock strong linearization of $G(\lam)$ when $A_4$ is nonsingular.

\end{example}

Next, we consider a Hamiltonian realization of $G(\lam)$ and construct a Hamiltonian strong linearization of $G(\lam).$

\begin{theorem}  Let $G(\lam)$ be Hamiltonian and  $\mathcal{S} (\lam)$ be a Hamiltonian realization of $G(\lam)$ as given in Proposition~\ref{prop:hamil}(a). Assume that $Ind(\textbf{z} +m) =0$ when the leading coefficient of $P(\lam)$ is singular, where $\textbf{z}$ is as given in Theorem~\ref{thm:Hamil:G}.   Then
	$\mathbb{T} (\lambda) :   =
	\left[
	\begin{array}{@{}c|c@{}}
		\mathbf{s} \,Q L(\lam) &  e_{m- i_0(\textbf{w})} \otimes B^T J^T \\[.1em] \hline \\[-1em]
		e^T_{m- i_0(\textbf{w})} \otimes B  & A - \lam I_r \\
	\end{array}
	\right]$  is Hamiltonian and is a Rosenbrock strong linearization of   $G (\lam)$, where $\textbf{w}$ and  $\mathbf{s}\, QL(\lam)$ are as given in Theorem~\ref{thm:Hamil:G}.

The transfer function   $\mathbb{G}(\lam) : = \mathbf{s} Q L(\lam) +  (e_{m-i_0(\textbf{w})} \otimes B^TJ^T ) ( \lam I_r - A)^{-1} (e^T_{m-i_0(\textbf{w})} \otimes B) $ of  $ \mathbb{T}(\lam)$  is  Hamiltonian.	
\end{theorem}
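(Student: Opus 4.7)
The plan is to recognize $\mathbb{T}(\lam)$ as the pencil obtained by left-multiplying a specific GFPR of the Hamiltonian system matrix $\mathcal{S}(\lam)$ by the quasi-identity $\mathbb{Q}=\diag(\mathbf{s}Q,I_r)$, and then (a) to transfer the Rosenbrock strong linearization property from that GFPR, and (b) to verify the Hamiltonian structure by a direct block-transpose calculation.

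First, I would apply Theorem~\ref{gfprptoge_RCh4} to $\mathcal{S}(\lam)$ with the choices $\sigma=\textbf{w}$, $\sigma_1=\tau_1=\emptyset$, $\sigma_2=\textbf{c}_w$, $\tau=\textbf{z}$, $\tau_2=\textbf{c}_z$, $C=B^T J^T$, $E=I_r$ to obtain
\[
\mathbb{L}(\lam) := (\lambda\mathbb{M}^\mathcal{S}_{\textbf{z}} - \mathbb{M}^\mathcal{S}_{\textbf{w}})\mathbb{M}^P_{\textbf{c}_w}\mathbb{M}^P_{\textbf{c}_z} = \left[\begin{array}{@{}c|c@{}} L(\lam) & e_{m-i_0(\textbf{w})}\otimes B^T J^T \\ \hline e^T_{m-c_0(\textbf{w},\textbf{c}_w)}\otimes B & A-\lam I_r\end{array}\right].
\]
Exactly as in the proof of Theorem~\ref{lem:gfpr:blksymm}, because $h$ is even one has $c_0(\textbf{w},\textbf{c}_w) = i_0(\textbf{w}) \in \{0,1\}$, so the two Kronecker indices coincide. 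The defining property of the sign $\mathbf{s}$ gives $Q(e_{m-i_0(\textbf{w})}\otimes I_n) = \mathbf{s}(e_{m-i_0(\textbf{w})}\otimes I_n)$, and using $\mathbf{s}^2=1$ one readily identifies $\mathbb{Q}\mathbb{L}(\lam) = \mathbb{T}(\lam)$.

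For the strong linearization property, I would repeat verbatim the matrix-assignment bookkeeping from the end of the proof of Theorem~\ref{thm:Hamil:G}: $h$ even together with Remark~\ref{oddeven_deg} yields $0\notin\textbf{c}_w$, while either $A_m$ being nonsingular or the standing hypothesis $\mathrm{Ind}(\textbf{z}+m)=0$ yields $-m\notin\textbf{c}_z$. Hence both trivial matrix assignments in $\mathbb{L}(\lam)$ are nonsingular, Theorem~\ref{thm:gfpr:RSL} applies, and $\mathbb{L}(\lam)$ is a Rosenbrock strong linearization of $G(\lam)$. The constant-nonsingular-matrix argument of Proposition~\ref{prop:gfprstlin} (whose proof only uses that the underlying pencil is already a Rosenbrock strong linearization, not that it is specifically a Fiedler pencil) then transfers the property from $\mathbb{L}(\lam)$ to $\mathbb{T}(\lam) = \mathbb{Q}\mathbb{L}(\lam)$.

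Finally, for the Hamiltonian structure I would compute $\mathbb{J}_{mn,r}\mathbb{T}(\lam)$ blockwise and verify $\big(\mathbb{J}_{mn,r}\mathbb{T}(-\lam)\big)^{T} = \mathbb{J}_{mn,r}\mathbb{T}(\lam)$. The $(1,1)$ block $\mathbf{s}Q L(\lam)$ is $T$-even by Theorem~\ref{eventhm}; the two off-diagonal blocks map into each other under the identity $J^T = -J$; and the $(2,2)$ block $JA - \lam J$ is $T$-even precisely because $A^T J^T = JA$ (the Hamiltonian identity for $A$) and $J^T = -J$. The corresponding claim for the transfer function reduces to the standard Hamiltonian-realization calculation: $A^T = JAJ$ gives $(-\lam I_r - A^T)^{-1} = J(\lam I_r - A)^{-1} J$, and combining with $J^2 = -I_r$ and $B^T J^T = -B^T J$ yields $\mathbb{G}(-\lam)^{T} = \mathbb{G}(\lam)$. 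The main technical nuisance throughout is bookkeeping the Kronecker-product, block-transpose, and $\pm J$ sign identities simultaneously; no conceptually new ingredient is required beyond what already appears in the proof of Theorem~\ref{thm:Hamil:G}.
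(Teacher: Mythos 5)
Your proposal is correct, but it takes a genuinely different route from the paper. The paper's proof is a short reduction to the $T$-even case: define $\widehat{\mathcal{S}}(\lam):=\mathbb{J}_{n,r}\mathcal{S}(\lam)=\left[\begin{smallmatrix}P(\lam)&B^TJ^T\\JB&JA-\lam J\end{smallmatrix}\right]$, observe that this is a $T$-even realization of $G(\lam)$ (since $A$ Hamiltonian makes $JA-\lam J$ $T$-even), apply Theorem~\ref{thm:Hamil:G} to $\widehat{\mathcal{S}}(\lam)$ to get a $T$-even Rosenbrock strong linearization $\widehat{\mathbb{L}}(\lam)$, and then note $\widehat{\mathbb{L}}(\lam)=\mathbb{J}_{mn,r}\mathbb{T}(\lam)$; both the Hamiltonian structure of $\mathbb{T}(\lam)$ and the strong-linearization property for $\mathcal{S}(\lam)$ fall out of this single change of basis in one stroke. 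You instead work directly with the Hamiltonian system matrix $\mathcal{S}(\lam)$ (with $C=B^TJ^T$, $E=I_r$), identify $\mathbb{T}(\lam)=\mathbb{Q}\mathbb{L}(\lam)$ where $\mathbb{L}(\lam)$ is the corresponding GFPR of $\mathcal{S}(\lam)$, establish the linearization property via Theorem~\ref{thm:gfpr:RSL} and a Proposition~\ref{prop:gfprstlin}-type transfer under the constant nonsingular factor $\mathbb{Q}$, and then verify the Hamiltonian structure by a blockwise computation of $\bigl(\mathbb{J}_{mn,r}\mathbb{T}(-\lam)\bigr)^T$ using $J^T=-J$, $J^2=-I_r$, $A^T=JAJ$, and the $T$-evenness of $\mathbf{s}QL(\lam)$. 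Both arguments are valid; your verification (including the resolvent identity $(-\lam I_r-A^T)^{-1}=J(\lam I_r-A)^{-1}J$ and the sign cancellation that makes the off-diagonal blocks match) checks out. The paper's route is shorter and automatically parallels the passage from part (a) to part (b) of Proposition~\ref{prop:hamil}; yours is more explicit about where each sign identity enters and correctly observes that the proof of Proposition~\ref{prop:gfprstlin} transfers the Rosenbrock strong linearization property under any constant nonsingular block-diagonal multiplication, not merely for Fiedler pencils, at the cost of repeating the matrix-assignment bookkeeping and the blockwise structure check that the reduction trick avoids.
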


\begin{proof}  Define $\widehat{\mathcal{S}}(\lam) : = \mathbb{J}_{n,r} \mathcal{S} (\lam) = \left[ \begin{array}{@{}c|c@{}} P(\lam) &  B^T J^T \\ \hline  JB & JA - \lam J \end{array} \right]. $ Since $ A$ is Hamiltonian, we have $JA -\lam J $ is $T$-even. This shows that  $ \widehat{\mathcal{S}} (\lam)$ is a $T$-even realization of $G(\lam).$  Hence by Theorem~\ref{thm:Hamil:G},
	\begin{equation} \label{eqn:2ndfSSn}
	\widehat{\mathbb{L}} (\lambda) : =
		\left[
	\begin{array}{@{}c|c@{}}
	\mathbf{s} \, Q L(\lam) &  e_{m-i_0(\textbf{w})} \otimes  B^TJ^T \\[.1em] \hline \\[-1em]
	e^T_{m-i_0(\textbf{w})} \otimes J B & JA -\lam J\\
	\end{array}
	\right]
	\end{equation}
	is a $T$-even Rosenbrock strong linearizations of $\widehat{\mathcal{S}} (\lam).$ Note that $\widehat{\mathbb{L}} (\lambda) = \mathbb{J}_{mn,r} \mathbb{T} (\lambda)$, where $\mathbb{J}_{mn,r} := \diag (I_{mn},  J)$. Since  $ \widehat{\mathbb{L}} (\lambda)$ is $T$-even, it follows that $\mathbb{T}(\lam)$ is Hamiltonian, that is, $( \mathbb{J}_{mn,r} \mathbb{T} (-\lambda))^T =\mathbb{J}_{mn,r} \mathbb{T} (\lambda).$  Further, since $\widehat{\mathbb{L}} (\lambda)$ is a Rosenbrock strong linearization of $\widehat{\mathcal{S}} (\lam)$ and $ \widehat{\mathcal{S}} (\lam) = \mathbb{J}_{n,r} \mathcal{S} (\lam)$, it follows that $\mathbb{T}(\lam) $ is a  Rosenbrock strong linearization of $\mathcal{S} (\lam)$. Obviously the transfer function $\mathbb{G}(\lam)$ is  Hamiltonian.   \end{proof}

\subsection{Skew-Hamiltonian linearizations}
Recall that a rational matrix $G(\lam)$ is said to be   skew-Hamiltonian (i.e., $T$-odd) if $G(- \lam)^T = -G(\lam)$.

\begin{proposition} \label{prop:skew_hamil} Let  $G(\lam)$ be   $T$-odd. Then there exists a minimal $T$-odd realization of $G(\lam)$ of the form $G(\lam): = P(\lam) + B^T (\lam I_r - A)^{-1}B$, where  $P(\lam)$ and $\lam I_r -A$ are $T$-odd. Thus the system matrix
	$\mathcal{S} (\lam) : =
	\left[ \begin{array}{@{}c|c@{}}
	P(\lam) &  -B^T \\ \hline  B &  \lam I_r -A
	\end{array}
	\right] $  is $T$-odd.
\end{proposition}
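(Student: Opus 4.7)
This proposition is the $T$-odd (skew-Hamiltonian) counterpart of Proposition~\ref{prop:hamil}(b), and the plan is to prove it in exactly the same spirit, the only new ingredient being that the structural constants are now $E=I_r$, $A^T=-A$ and $C=B^T$. The argument will proceed in three steps: (i) split $G$ into polynomial plus strictly proper part, (ii) produce a minimal realization of the strictly proper part with $A$ skew-symmetric and $C=B^T$, and (iii) verify that the resulting system matrix is $T$-odd.

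For (i), I write $G(\lam)=P(\lam)+G_{sp}(\lam)$, the unique polynomial/strictly-proper splitting. The map $H(\lam)\mapsto H(-\lam)^T$ preserves both the polynomial and strictly-proper classes, so the identity $G(-\lam)^T=-G(\lam)$ forces $P(-\lam)^T=-P(\lam)$ and $G_{sp}(-\lam)^T=-G_{sp}(\lam)$ separately.

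For (ii), I claim that $G_{sp}(\lam)$ admits a minimal realization of the desired shape; this is the $T$-odd analogue of the Hamiltonian realization result cited from~\cite{hilscot} in the proof of Proposition~\ref{prop:hamil}. One route to a direct proof is to start from an arbitrary minimal realization $G_{sp}(\lam)=C_0(\lam I_r-A_0)^{-1}B_0$ and observe that the $T$-odd identity $G_{sp}(-\lam)^T=-G_{sp}(\lam)$ is equivalent to saying that $(-A_0^T,C_0^T,B_0^T)$ is a second minimal realization of $G_{sp}(\lam)$. By uniqueness of the similarity transformation between minimal realizations there is a unique invertible $T$ with $TA_0=-A_0^T T$, $TB_0=C_0^T$ and $C_0=B_0^T T$; applying the same duality to $T^T$ and invoking uniqueness once more forces $T=T^T$, i.e.\ $T$ is complex symmetric. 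Since every invertible complex symmetric matrix is congruent to $I_r$, one can factor $T=Q^T Q$ for some invertible $Q$; setting $(A,B,C):=(Q A_0 Q^{-1},\, Q B_0,\, C_0 Q^{-1})$, a direct check yields $A^T=-A$ and $C=B^T$, so the realization has the required form.

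Finally, for (iii), a direct block computation gives
\begin{equation*}
\mathcal{S}(-\lam)^T=\begin{bmatrix}P(-\lam)^T & B^T\\ -B & -\lam I_r-A^T\end{bmatrix}=\begin{bmatrix}-P(\lam) & B^T\\ -B & -\lam I_r+A\end{bmatrix}=-\mathcal{S}(\lam),
\end{equation*}
which uses $P(-\lam)^T=-P(\lam)$ from step (i) and $A^T=-A$ from step (ii). The main obstacle is step (ii): producing a minimal realization that simultaneously enforces $A=-A^T$ and $C=B^T$. If the skew-Hamiltonian realization theorem can be cited directly from the literature this is immediate; otherwise the similarity-and-congruence route outlined above is needed, and it relies crucially on working over $\mathbb{C}$, so that the factorization $T=Q^T Q$ of the intertwiner is guaranteed.
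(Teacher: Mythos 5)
Your proof follows the same overall architecture as the paper's: split $G=P+G_{sp}$ and observe that both summands inherit the $T$-odd structure, obtain a minimal realization of $G_{sp}$ with $A$ skew-symmetric and $C=B^T$, then verify the system matrix identity by a direct block transpose. The one genuine difference is in step (ii): where the paper simply cites the skew-Hamiltonian realization theorem for strictly proper $T$-odd transfer functions from~\cite{hilscot}, you reconstruct it from scratch via the state-space isomorphism theorem. Your intertwiner argument is correct --- the $T$-odd identity makes $(-A_0^T, C_0^T, B_0^T)$ a second minimal realization, the state-space isomorphism theorem produces a unique invertible $T$ conjugating one to the other, uniqueness applied to $T^T$ forces $T=T^T$, and over $\mathbb{C}$ every nondegenerate symmetric bilinear form is equivalent to the identity, giving $T=Q^TQ$ and hence a change of coordinates that symmetrizes the realization. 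The paper's route is shorter and defers to the literature; yours is self-contained and makes explicit why the base field $\mathbb{C}$ is needed (over $\mathbb{R}$ the signature of $T$ would obstruct the factorization $T=Q^TQ$), which is a worthwhile observation that the paper suppresses by citing.

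One small remark: the system matrix written in the proposition, with $-B^T$ in the $(1,2)$ block and $\lam I_r - A$ in the $(2,2)$ block, is the standard Rosenbrock system matrix of (\ref{slamsystemmatrix_RCh4}) with $C=B^T$, $E=I_r$, post-multiplied by $\diag(I_n,-I_r)$; this sign flip is exactly what makes the block-transpose computation close up to $-\mathcal{S}(\lam)$, as your final display confirms.
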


\begin{proof}  Since $G(\lam) = P(\lam) +G_{sp}(\lam)$ is $T$-odd, it follows that both  $P(\lam)$ and $G_{sp}(\lam)$ are $T$-odd. Since $G_{sp}(\lam)$ is $T$-odd and strictly proper,
 there exists a minimal $T$-odd realization of $G_{sp}(\lam)$ of the form  $G_{sp}(\lam) =  B^T (\lam I_r - A)^{-1}B$, where $A$ is skew-symmetric; see~\cite{hilscot}. Since $A$ is skew-symmetric, we have $\lam I_r - A$ is $T$-odd.  This
shows that $G(\lam) = P(\lam)+ B^T (\lam I_r - A)^{-1}B$ is a minimal $T$-odd realization of $G(\lam)$ and that the  system matrix
	$\mathcal{S} (\lam)  $   is $T$-odd.
\end{proof}

The next result gives  $T$-odd Rosenbrock strong  linearizations of $G(\lam)$.

\begin{theorem} \label{thm:skewHamil:G} Let $G(\lam)$ be $T$-odd and
	$\mathcal{S} (\lam) $ be as given in Proposition~\ref{prop:skew_hamil}.  Let $h, \textbf{w}, \textbf{c}_w, \textbf{z} $ and $ \textbf{c}_z$ be as in Theorem~\ref{eventhm}.  Consider the GFPR  $\mathbb{L}(\lambda) := \big( \lambda \mathbb{M}^\mathcal{S}_{\textbf{z}} - \mathbb{M}^\mathcal{S}_{\textbf{w}} \big)  \mathbb{M}^P_{\textbf{c}_w}  \mathbb{M}^P_{\textbf{c}_z} $ associated with $\mathcal{S}(\lam)$. Then  there exists a unique quasi-identity matrix $\mathbb{Q} :=\diag( \mathbf{s}\, Q, I_r)$  such that  	$$ \mathbb{Q}\mathbb{L} (\lambda)   =
	\left[
	\begin{array}{@{}c|c@{}}
	 \mathbf{s} Q L(\lam) &  - e_{m -i_0(\textbf{w})} \otimes B^T \\[.1em] \hline \\[-1em]
	e^T_{m -i_0(\textbf{w})} \otimes B  & \lam I_r -A \\
	\end{array}
	\right] $$  is  $T$-odd, where $Q$ and $L(\lam)$ are as in Theorem~\ref{eventhm} and $\mathbf{s}$ is the $(m - i_0(\textbf{w}))$-th parameter of $Q$.
	
Assume that $Ind(\textbf{z} +m) =0$ when leading coefficient of $P(\lam)$ is singular. Then $\mathbb{Q} \mathbb{L}(\lam)$ is a $T$-odd Rosenbrock strong linearization of  $G(\lam)$.  The transfer function
	$\mathbb{G} (\lam) : =  \mathbf{s} \,Q L(\lam) +   (e_{m-i_0(\textbf{w})} \otimes B^T)  ( \lam I_r -A )^{-1} (e^T_{m-i_0(\textbf{w})} \otimes B) $ of  $\mathbb{Q} \mathbb{L}(\lam)$  is $T$-odd.  	
%
\end{theorem}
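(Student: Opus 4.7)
The plan is to mirror the proof of the Hamiltonian case (Theorem~\ref{thm:Hamil:G}), adapting the sign conventions to the $T$-odd setting. By construction, the GFPR $\mathbb{L}(\lam)$ has the shape required to apply Theorem~\ref{gfprptoge_RCh4} with $\sigma=\mathbf{w}$, $\tau=\mathbf{z}$, $\sigma_1=\tau_1=\emptyset$, $\sigma_2=\mathbf{c}_{w}$, and $\tau_2=\mathbf{c}_{z}$. This gives
\[
\mathbb{L}(\lam)=\left[\begin{array}{c|c} L(\lam) & e_{m-i_0(\mathbf{w})}\otimes C \\ \hline e^{T}_{m-c_0(\mathbf{w},\mathbf{c}_{w})}\otimes B & A-\lam E\end{array}\right],
\]
where, in the present realization, $C=-B^{T}$ and $A-\lam E=\lam I_{r}-A$. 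Exactly as in the proof of Theorem~\ref{lem:gfpr:blksymm}, the fact that $h$ is even and $\mathbf{w}$ is the simple admissible tuple of $\{0:h\}$ forces $i_0(\mathbf{w})=c_0(\mathbf{w},\mathbf{c}_{w})$ (both equal $0$ when $h=0$ and both equal $1$ when $h>0$), so the two index tuples labelling the off-diagonal blocks coincide.

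Next I would invoke Theorem~\ref{eventhm} on the $T$-odd polynomial $P(\lam)$, which yields a unique (up to sign) quasi-identity $Q$ such that $\mathbf{s}QL(\lam)$ is $T$-odd, with $\mathbf{s}$ chosen as the $(m-i_0(\mathbf{w}))$-th diagonal block parameter of $Q$. Left-multiplication by $\mathbb{Q}=\diag(\mathbf{s}Q,I_{r})$ acts on the top row of blocks only; since $Q(e_{m-i_0(\mathbf{w})}\otimes I_{n})=\mathbf{s}(e_{m-i_0(\mathbf{w})}\otimes I_{n})$ and $\mathbf{s}^{2}=1$, the top-right block is transformed into $e_{m-i_0(\mathbf{w})}\otimes C=-e_{m-i_0(\mathbf{w})}\otimes B^{T}$, giving the claimed form of $\mathbb{Q}\mathbb{L}(\lam)$.

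To check that $\mathbb{Q}\mathbb{L}(\lam)$ is $T$-odd, I would verify each block separately: the top-left block is $T$-odd by the choice of $Q$; the off-diagonal blocks satisfy $(-e_{m-i_0(\mathbf{w})}\otimes B^{T})^{T}=-(e^{T}_{m-i_0(\mathbf{w})}\otimes B)$, which is exactly what $T$-oddness of the pencil requires; and the bottom-right block $\lam I_{r}-A$ is $T$-odd because Proposition~\ref{prop:skew_hamil} guarantees $A$ skew-symmetric. For the Rosenbrock strong linearization claim, I would apply Theorem~\ref{thm:gfpr:RSL}: since $h$ is even, Remark~\ref{oddeven_deg} gives $0\notin\mathbf{c}_{w}$, so the matrix assignment for $\mathbf{c}_{w}$ is nonsingular; if $P(\lam)$ has nonsingular leading coefficient, the matrix assignment for $\mathbf{c}_{z}$ is automatically nonsingular, and otherwise the hypothesis $Ind(\mathbf{z}+m)=0$ combined with Remark~\ref{oddeven_deg} gives $-m\notin\mathbf{c}_{z}$, ensuring nonsingularity as well. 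Left-multiplication by the constant, nonsingular (hence biproper) block-diagonal matrix $\mathbb{Q}$ preserves both conditions (a) and (b) of Definition~\ref{stln} by absorbing $\mathbb{Q}^{-1}=\diag((\mathbf{s}Q)^{-1},I_{r})$ into $U(\lam)$ and into $\mathcal{O}_{\ell}(\lam)$.

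Finally, the $T$-oddness of the transfer function $\mathbb{G}(\lam)$ follows by a short direct computation: evaluating $\mathbb{G}(-\lam)^{T}$, the top-left piece becomes $-\mathbf{s}QL(\lam)$, and in the strictly proper piece the identity $((-\lam I_{r}-A)^{-1})^{T}=(-\lam I_{r}-A^{T})^{-1}=-(\lam I_{r}-A)^{-1}$ (using $A^{T}=-A$) produces an overall sign flip, giving $\mathbb{G}(-\lam)^{T}=-\mathbb{G}(\lam)$. The main bookkeeping hurdle, as in the Hamiltonian case, is keeping the signs straight: here the bottom-right block is $\lam I_{r}-A$ rather than $A-\lam E$, so inversion in the transfer-function formula introduces a sign that combines with the negative in the top-right block to yield the stated positive-sign expression for $\mathbb{G}(\lam)$.
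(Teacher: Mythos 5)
Your proposal is correct and follows essentially the same route as the paper's own proof: both apply Theorem~\ref{gfprptoge_RCh4} to write $\mathbb{L}(\lam)$ in block form, note $i_0(\mathbf{w})=c_0(\mathbf{w},\mathbf{c}_w)$ as in the $T$-even case, invoke Theorem~\ref{eventhm} for the quasi-identity $Q$, use $\mathbf{s}^2=1$ to simplify the off-diagonal block, and then deduce the Rosenbrock-strong-linearization property from Theorem~\ref{thm:gfpr:RSL} together with Remark~\ref{oddeven_deg}. The only differences are cosmetic: you spell out the sign check for the transfer function and the stability of the linearization property under left-multiplication by the constant nonsingular $\mathbb{Q}$, which the paper leaves implicit by referring back to Theorem~\ref{thm:Hamil:G}.
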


\begin{proof} By Theorem~\ref{gfprptoge_RCh4}, we have
	$$ \mathbb{L}(\lambda)  =
	\left[
	\begin{array}{@{}c|c@{}}
	L(\lam) &  e_{m-i_0(\textbf{w})} \otimes (-B^T) \\[.1em] \hline \\[-1em]
	e^T_{m-c_0( \textbf{w}, \textbf{c}_w)} \otimes B  &  \lam I_r -A   \\
	\end{array}
	\right],$$
	where $L(\lam)$ is as given in Theorem~\ref{eventhm}. It is shown in the proof of Theorem~\ref{thm:Hamil:G} that $ i_0(\textbf{w}) = c_0(\textbf{w}, \textbf{c}_w) $. Set $\alpha: = i_0(\textbf{w})$. Then $Q (e_{m-\alpha} \otimes I_n ) = \mathbf{s} \, (e_{m-\alpha} \otimes I_n ). $ Note that $ \mathbf{s}\mathbf{s}=1$. Consequently, we have
	\begin{equation} \label{eqn:odd:G}
	\mathbb{Q} \mathbb{L}(\lam)  =
	\left[
	\begin{array}{@{}c@{\,}|@{\,}c@{}}
	\mathbf{s} \, Q L(\lam) &  \mathbf{s} \, Q (e_{m-\alpha} \otimes (-B^T) )\\ \hline \\[-1em]
	e^T_{m-\alpha} \otimes B  & \lam I_r -A  \\
	\end{array}
	\right] =
	\left[
	\begin{array}{@{}c@{\,}|@{\,}c@{}}
	\mathbf{s} \, Q L(\lam) &  e_{m-\alpha} \otimes (-B^T) \\ \hline \\[-1em]
	e^T_{m-\alpha} \otimes B  & \lam I_r -A  \\
	\end{array}
	\right].
	\end{equation}
By Theorem~\ref{eventhm}, $ \mathbf{s} \, Q L(\lam)$ is $T$-odd. Since $\lam I_r -A$ is  $T$-odd, it follows from (\ref{eqn:odd:G}) that  $ \mathbb{Q} \mathbb{L}(\lam)$ is $T$-odd.
	
By the same arguments as given in the proof of Theorem~\ref{thm:Hamil:G}, it follows that $\mathbb{Q} \mathbb{L}(\lam)$ is a Rosenbrock strong linearization of $G(\lam)$. Obviously, the transfer function $\mathbb{G}(\lam)$  is $T$-odd.
\end{proof}

\begin{example} Let $G(\lam) = \sum_{i=0}^5 \lambda^i A_i + B^T(\lam I_r- A)^{-1}B$ be a $T$-odd realization of $G(\lam)$ and $\mathcal{S} (\lam) $ be as given in Proposition~\ref{prop:skew_hamil}. Set  $ \mathbb{Q} := \diag (I_n, -I_n, I_n, -I_n,-I_n, I_r) $ and consider the GFPR $\mathbb{L} (\lam): =   (\lam \mathbb{M}^\mathcal{S}_{(-4:-3,-5)} - \mathbb{M}^\mathcal{S}_{(1:2,0)} ) \mathbb{M}^P_{1} \mathbb{M}^P_{-4}$. Then
	$$ \mathbb{Q} \mathbb{L}(\lam) = \left[ \begin{array}{ccccc|c} 0 & -I_n & \lam I_n  & 0 & 0& 0\\
	I_n & - \lam A_{5} + A_{4}  & - \lam A_{4} & 0 & 0& 0\\
	\lam I_n & \lam A_{4} & \lam A_{3}  + A_{2} & A_{1} & -I_n& 0\\
	0 & 0 & -A_{1} & \lam A_{1}-A_{0} & -\lam I_n & - B^T\\
	0 & 0 & I_n & -\lam  I_n & 0 & 0\\ \hline
	0 & 0 & 0 & B & 0& \lam I_r -A  \end{array}\right] $$
	is a $T$-odd Rosenbrock strong linearization of $G(\lam)$. Notice that $\mathbb{Q} \mathbb{L}(\lam)$ is a block penta-diagonal pencil.	
	
Next, let $G(\lam) = \sum_{i=0}^5 \lambda^i A_i + B^T(\lam I_r- A)^{-1}B$ be a $T$-odd realization. Consider  $\mathbb{L} (\lam)  := (\lam \mathbb{M}^\mathcal{S}_{(-4:-1)} - \mathbb{M}^\mathcal{S}_{0} ) \mathbb{M}^P_{(-4:-2,-4:-3,-4)}$ and $ \mathbb{Q} := \diag (I_n, -I_n, I_n, -I_n, I_r) $. Then
$$ \mathbb{Q} \mathbb{L} (\lam)  =  \left[ \begin{array}{cccc|c}
0 & 0 &  - A_4 & \lam A_4 &0\\
0 & A_4 & -\lam A_4 + A_3 & -\lam A_3 &0\\
- A_4 & \lam A_4-A_3 & \lam A_3 - A_2 & \lam A_2 &0\\
-\lam A_4 & - \lam A_3 &- \lam A_2 & -\lam A_1 - A_0 &  -B^T\\ \hline \\[-1.1em]
0 & 0& 0& B & \lam I_r -A
\end{array}\right]$$
is a $T$-odd Rosenbrock strong linearization of $G(\lam)$ when $A_4$ is nonsingular.	
	
\end{example}

\subsection{Skew-symmetric linearizations} Suppose that $G(\lam)$ is skew-symmetric, that is, $G( \lam)^T = -G(\lam)$. Since $ G(\lam) = P(\lam)+G_{sp}(\lam)$, it follows that $ P(\lam) $ and $ G_{sp}(\lam)$ are skew-symmetric.

\begin{definition} Suppose that $G(\lam)$ is skew-symmetric.
 \begin{itemize}

\item[(a)] A realization of $G(\lam)$ of the form  $G(\lam) =P(\lam) + C (\lam I_r - A)^{-1}B$  is said to be a skew-Hamiltonian realization of $G(\lam)$ if $P(\lam)$ is skew-symmetric,  $A$ is skew-Hamiltonian with $ r = 2\ell$ and  $C^T=JB.$
\item[(b)] A system matrix $\mathcal{S}(\lam)$ of the form  $\mathcal{S}(\lam) : =
		\left[ \begin{array}{@{}c|c@{}}
		P(\lam) &  -C \\ \hline  B &  \lam I_r -A
		\end{array}
		\right] $  is said to be a skew-Hamiltonian system matrix if  $ r = 2\ell$ and    $\big (\mathbb{J}_{n,r} \, \mathcal{S}  (\lam) \big )^T = - \mathbb{J}_{n,r} \, \mathcal{S}  (\lam)$, where $\mathbb{J}_{n,r}: = \diag (I_n , J)$.
\item[(c)] A realization of $G(\lam)$ of the form  $G(\lam) =P(\lam) + C (\lam E - A)^{-1}B$ with $E$ being nonsingular is said to be a skew-symmetric realization of $G(\lam)$ if $ C= B^T$ and both $P(\lam)$ and $ \lam E - A $ are  skew-symmetric.
\end{itemize}
\end{definition}

 \begin{remark}
Observe that  $G(\lam) =P(\lam) + C (\lam I_r - A)^{-1}B$  is a skew-Hamiltonian realization of $G(\lam)$  if and only if  $\mathcal{S}(\lam) : =
		\left[ \begin{array}{@{}c|c@{}}
		P(\lam) &  -C \\ \hline  B &  \lam I_r -A
		\end{array}
		\right] $  is a skew-Hamiltonian system matrix of $G(\lam).$ On the other hand, $G(\lam) =P(\lam) + C (\lam E - A)^{-1}B$ is a skew-symmetric realization of $G(\lam)$ if and only if  $\mathcal{S}(\lam) : =
		\left[ \begin{array}{@{}c|c@{}}
		P(\lam) &  -C \\ \hline  B & \lam E -A
		\end{array}
		\right] $ is a skew-symmetric system matrix of $G(\lam).$

\end{remark}

For convenience, we often refer to $\mathcal{S}(\lam)$ as a skew-symmetric (resp., skew-Hamiltonian) realization of $G(\lam)$ when $\mathcal{S}(\lam)$ is skew-symmetric (resp., skew-Hamiltonian).

\begin{proposition}\label{prop:skew_symm} Suppose that  $G(\lam)$  is  skew-symmetric. Then we have  the following:
	\begin{itemize}
		\item[(a)] There exists a minimal skew-Hamiltonian realization of $G(\lam)$ of the form  $G(\lam) = P(\lam) + C (\lam I_r - A)^{-1}B$  with $ r = 2 \ell$ and $JB = C^T.$   Thus the system matrix
		$\mathcal{S} (\lam) : =
		\left[ \begin{array}{@{}c|c@{}}
		P(\lam) &  -B^T J^T\\ \hline  B &  \lam I_r -A
		\end{array}
		\right] $ associated with $G(\lam)$ is skew-Hamiltonian.

		\item[(b)] There exists a minimal skew-symmetric realization of $G(\lam)$ of the form $G(\lam) = P(\lam) + B^T (\lam E- A)^{-1}B$.  Thus the   system matrix
		$\mathcal{S} (\lam)  =
		\left[ \begin{array}{@{}c|c@{}}
		P(\lam) &  -B^T\\ \hline  B &  \lam E -A
		\end{array}
		\right]$ associated with $G(\lam)$ is skew-symmetric.
	\end{itemize}
\end{proposition}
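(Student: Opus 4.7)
The plan is to mirror the two-step structure of the proof of Proposition~\ref{prop:hamil}, which handled the Hamiltonian ($T$-even) case, and to translate each ingredient to the skew-symmetric ($T$-odd in disguise) setting. Since $G(\lam) = P(\lam) + G_{sp}(\lam)$ with $G_{sp}(\lam)$ strictly proper, the hypothesis $G(\lam)^T = -G(\lam)$ forces both $P(\lam)$ and $G_{sp}(\lam)$ to be skew-symmetric. This reduces everything to producing an appropriately structured realization of the strictly proper, skew-symmetric piece $G_{sp}(\lam).$

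For part (a), I would invoke the known structured realization theory for strictly proper skew-symmetric rational matrices (see \cite{hilscot}), which guarantees a minimal realization $G_{sp}(\lam) = C(\lam I_r - A)^{-1}B$ with $r = 2\ell$, $A$ skew-Hamiltonian, and $C^T = JB$, i.e.\ $C = B^T J^T.$ Consequently $G(\lam) = P(\lam) + B^T J^T (\lam I_r - A)^{-1} B$ is a minimal skew-Hamiltonian realization of $G(\lam).$ It then remains to check that the associated system matrix
\[
\mathcal{S}(\lam) = \left[ \begin{array}{@{}c|c@{}} P(\lam) & -B^T J^T \\ \hline B & \lam I_r - A \end{array}\right]
\]
satisfies $(\mathbb{J}_{n,r}\mathcal{S}(\lam))^T = -\mathbb{J}_{n,r}\mathcal{S}(\lam)$. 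This is a direct block-wise verification: $P(\lam)^T = -P(\lam)$ handles the $(1,1)$ block, the off-diagonal blocks match because $(JB)^T = B^T J^T$, and the $(2,2)$ block reduces to showing $(\lam J - JA)^T = -\lam J + JA$, which is precisely the skew-Hamiltonian condition $(JA)^T = -JA$ on $A.$

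For part (b), I would transform the realization from (a) into a skew-symmetric one by absorbing $J$. Using $J^T = -J = J^{-1}$, we rewrite
\[
B^T J^T (\lam I_r - A)^{-1} B = B^T (\lam J - A J)^{-1} B,
\]
so setting $E := J$ and redefining $A := AJ$ gives the realization $G(\lam) = P(\lam) + B^T(\lam E - A)^{-1}B$ with $E$ nonsingular. Both $P(\lam)$ and $\lam E - A$ must then be verified to be skew-symmetric: skew-symmetry of $P(\lam)$ is given, and for $\lam J - AJ$ one computes $(\lam J - AJ)^T = -\lam J + J A^T$, so the required identity $JA^T = AJ$ must hold. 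Starting from the skew-Hamiltonian relation $A^T J = JA$ (equivalently $(JA)^T = -JA$) and using $J^2 = -I_r$, multiplication on the right by $J$ yields $A^T J^2 = JAJ$, i.e.\ $-A^T = JAJ$, whence $JA^T = -J^2 AJ = AJ,$ as needed. Minimality is inherited from the realization constructed in (a) since the transformation $A \mapsto AJ$ is a nonsingular change of state coordinates.

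The main obstacle I anticipate is not conceptual but bookkeeping: correctly tracking the signs produced by $J^T = -J$ and $J^2 = -I_r$ while translating the skew-Hamiltonian identity on $A$ into the skew-symmetry of the pencil $\lam E - A.$ Everything else is parallel to the Hamiltonian proof.
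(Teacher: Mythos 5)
Your proposal is correct and follows essentially the same route as the paper's proof: reduce to the strictly proper skew-symmetric piece, cite the minimal skew-Hamiltonian realization of $G_{sp}$ from \cite{hilscot} for part (a), and obtain (b) by absorbing $J$ via $B^T J^T (\lam I_r - A)^{-1}B = B^T(\lam J - AJ)^{-1}B$ and setting $E := J$, $A := AJ$. The only difference is cosmetic — you re-derive the identity $(AJ)^T = -AJ$ from the skew-Hamiltonian condition by hand, whereas the paper invokes it directly as a stated consequence of the definition.
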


\begin{proof}  Since $G(\lam) = P(\lam)+G_{sp}(\lam)$ is skew-symmetric, we have both $P(\lam)$ and $G_{sp}(\lam)$ are skew-symmetric. Also since $G_{sp}(\lam)$ is strictly proper and skew-symmetric, there exists a minimal skew-Hamiltonian realization of $G_{sp}(\lam)$ of the form $G_{sp}(\lam) =  C (\lam I_r - A)^{-1} B$ with $ r = 2\ell$ and $ JB = C^T$; see~\cite{hilscot}. Hence  $G(\lam) = P(\lam) + C (\lam I_r - A)^{-1} B$ is a minimal skew-Hamiltonian realization of $G(\lam).$ Obviously the system matrix $\mathcal{S} (\lam)$ is skew-Hamiltonian, that is, $  (\mathbb{J}_{n,r} \, \mathcal{S} ( \lam))^T = -\mathbb{J}_{n,r} \,\mathcal{S} (\lam),$ where $\mathbb{J}_{n,r} := \diag (I_n,  J)$. This proves (a).

By part (a), $G(\lam) = P(\lam) + B^T J^T (\lam I_r - A)^{-1} B = P(\lam) + B^T( \lam J - AJ)^{-1} B.$  Since $A$ is skew-Hamiltonian, it follows that $\lam J - A J$ is skew-symmetric. Hence setting  $E := J$ and redefining $A := AJ$, it follows that $G(\lam) = P(\lam) + B^T (\lam E- A)^{-1}B$ is a minimal skew-symmetric realization of $G(\lam).$
Evidently, the system matrix $\mathcal{S} (\lam)$ is skew-symmetric, that is, $\mathcal{S} (\lam)^T = -\mathcal{S} (\lam)$. This proves (b).
\end{proof}

Let $\alpha$ be a permutation of $\{0:k \}$ for  $k \geq 0$ with  csf$(\alpha)$ being the column standard form of $\alpha$. Then an index $s \in \{ 0: k-1\}$ is said to be a right index of type-1 relative to $\alpha$ if there is a string $(s : t)$ in the csf$(\alpha)$ such that $s <  t$, see~\cite{bfs2}.

\begin{definition} [\cite{bfs2}, Associated simple tuple] \label{assosimtupp0318oct2254} Let $\alpha$ be a permutation of  $\{0: k \}$ for some $k \geq 0$.  Suppose that csf$(\alpha)= (\textbf{b}_{d}, \textbf{b}_{d-1}, \ldots , \textbf{b}_{1} )$, where $\textbf{b}_{i} = (a_{i-1} +1 : a_i)$ for $i= 2:d$ and  $\textbf{b}_{1} = (0 : a_1).$ If $s$ is a right index of type-1  relative to $\alpha$ then the simple tuple associated with $(\alpha, s)$ is denoted by $z_r(\alpha,s)$ and is given by
	\begin{itemize}
		\item $ z_r(\alpha,s):= (\textbf{b}_{d}, \textbf{b}_{d-1}, \ldots, \textbf{b}_{h+1}, \widetilde{\textbf{b}}_{h}, \widetilde{\textbf{b}}_{h-1},\textbf{b}_{h-2}, \ldots , \textbf{b}_{1} ) $ if $ s = a_{h-1} +1 \neq 0, $
		$ \text{where } \widetilde{\textbf{b}}_{h} = (a_{h-1}+2 : a_h) ~ \mbox{and}~\widetilde{\textbf{b}}_{h-1} = (a_{h-2}+ 1 : a_{h-1}+1 ).$
		
		\item $ z_r(\alpha,s):= (\textbf{b}_{d}, \textbf{b}_{d-1},    \ldots , \textbf{b}_{2}, \widetilde{\textbf{b}}_{1} , \widetilde{\textbf{b}}_{0}  ) $ if $s=0$, where $ \widetilde{\textbf{b}}_{1} = (1 : a_1) ~ \mbox{and}~\widetilde{\textbf{b}}_{0} = (0 ).$
		
	\end{itemize}
	
\end{definition}

\begin{definition}[\cite{bfs2}, Type-1 index tuple] \label{leftrightituple} Let $\alpha$ be a permutation of $\{0: k \}$, $k \geq 0$, and let $\beta := (s_1, \ldots, s_r)$ be an index tuple containing indices from $ \{0: k-1 \}$. Then $\beta$ is said to be a right index tuple of type-1 relative to $\alpha$ if, for $i=1:r$, $s_i$ is a right index of type-1 relative to $z_r(\alpha, (s_1,\ldots, s_{i-1}))$, where $z_r(\alpha,(s_1,\ldots, s_{i-1})): = z_r(z_r(\alpha,(s_1,\ldots, s_{i-2})), s_{i-1})$ for $i>2.$
\end{definition}

We need the following result which is a particular case of \cite[Theorem 3.15]{bfs2}.

\begin{theorem} \cite{bfs2} \label{thm:skew_sym} Let $P(\lambda)$ be skew symmetric and let $0 \leq h \leq m-1$ be even. Let $\textbf{w}$ be the simple admissible tuple of $\{ 0 : h\}$ and $\textbf{c}_{w}$ be the symmetric complement of $\textbf{w}$. Let $\textbf{z}+m$ be any admissible tuple of $\{0: m-h-1\}$. Let $\textbf{c}_{z}+m$ be the symmetric complement of $\textbf{z}+m$. Let $\textbf{t}_w $ containing indices from $\{0 : h-1\} $ and $ \textbf{t}_{z}+m$ containing indices from $ \{0: m-h-2\}$ be right  index tuples of type-1 relative to $rev(\textbf{w})$ and $rev( \textbf{z}+m)$, respectively.  Consider
	$$ L(\lambda) := M^P_{rev(\textbf{t}_z)}  M^P_{rev(\textbf{t}_w)} \big( \lambda M^P_{\textbf{z}} - M^P_{\textbf{w}} \big)  M^P_{\textbf{c}_w} M^P_{\textbf{t}_w} M^P_{\textbf{c}_z}  M^P_{\textbf{t}_z}.$$
	Then, up to multiplication by $-1$, there exists a unique quasi-identity matrix $Q$ such that $QL(\lambda)$ is skew-symmetric.
\end{theorem}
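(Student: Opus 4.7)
My plan is to reduce the statement to showing that the block pattern of $L(\lambda)$ is skew-symmetric up to a row-scaling by a diagonal sign matrix $Q$, and then to use the fact that every $A_j$ is skew-symmetric in order to upgrade this ``block skew-symmetry modulo signs'' into full skew-symmetry of $QL(\lambda)$. A key enabling observation is that the ``positive'' tuples $\textbf{w}, \textbf{c}_w, \textbf{t}_w, rev(\textbf{t}_w)$ draw their indices from $\{0\!:\!h\}$ while the ``negative'' tuples $\textbf{z}, \textbf{c}_z, \textbf{t}_z, rev(\textbf{t}_z)$ draw from $\{-m\!:\!-h\!-\!2\}$, so every mixed pair satisfies $||i|-|j|| \geq 2 > 1$ and the corresponding Fiedler factors $M^P_i$ and $M^P_j$ commute. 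This lets me reorganize $L(\lambda)$ into a product of a ``positive-index cluster'' and a ``negative-index cluster'', so that each nonzero block of $L(\lambda)$ takes one of the simple shapes $\pm I_n$, $\pm\lambda I_n$, $\pm A_j$, or $\pm \lambda A_j$.

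Block-by-block, skew-symmetry of $QL(\lambda)$ with $Q = \diag(s_1 I_n, \ldots, s_m I_n)$, $s_i \in \{\pm 1\}$, translates into a system of constraints on the ratios $s_i/s_j$. Concretely, if the $(i,k)$ and $(k,i)$ blocks of $L(\lambda)$ are both proportional to $I_n$ with signs $\alpha$ and $\beta$, the constraint is $s_i/s_k = -\beta/\alpha$ (since $I_n^T = I_n$), whereas if they are both proportional to some $A_j$ with signs $\alpha$ and $\beta$, the constraint is $s_i/s_k = \beta/\alpha$ (since $A_j^T = -A_j$). The theorem thus amounts to showing that these constraints are mutually consistent and determine the $s_i$'s uniquely up to a global sign. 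I would establish this by induction on the combined length of $\textbf{t}_w$ and $\textbf{t}_z$. The base case $\textbf{t}_w = \textbf{t}_z = \emptyset$ reduces to the pencil $(\lambda M^P_{\textbf{z}} - M^P_{\textbf{w}}) M^P_{\textbf{c}_w} M^P_{\textbf{c}_z}$ of Theorem~\ref{eventhm}, whose block pattern can be computed explicitly from the defining formulas for the Fiedler matrices; the required $Q$ is then read off from the parities of the consecutions/inversions of $\textbf{w}$ and $\textbf{z}$.

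For the inductive step, one uses the type-1 right-index hypothesis (Definitions~\ref{assosimtupp0318oct2254}--\ref{leftrightituple}): prepending a single type-1 index $s$ to $\textbf{t}_w$ (together with inserting $s$ at the tail of $rev(\textbf{t}_w)$) amounts to conjugating the current pencil by $M^P_s$, an operation that permutes and sign-flips block entries in a prescribed way. The type-1 condition is precisely the combinatorial restriction guaranteeing that such a conjugation carries a block pattern admitting a compatible $Q$ into another such pattern, with an explicitly updated $Q$. Uniqueness of $Q$ up to global sign then follows because, along any two block positions connected by a chain of imposed constraints, the ratios $s_i/s_k$ are forced, and the block pattern of $L(\lambda)$ is connected in this sense.

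The main obstacle is executing the inductive step: it requires a case analysis, for each elementary conjugation $L \mapsto M^P_s L (M^P_s)^{-1}$-style move, of which block entries are swapped, negated, or merged, and a verification that the type-1 right-index property yields exactly the moves preserving consistency. The combinatorial machinery of associated simple tuples (Definition~\ref{assosimtupp0318oct2254}) is engineered precisely so that this bookkeeping goes through uniformly, and the rest of the argument is routine.
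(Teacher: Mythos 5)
This theorem is stated in the paper only as a citation to \cite[Theorem~3.15]{bfs2}; the paper contains no proof of it, so there is no in-paper argument to compare against. Your sketch lays out a reasonable plan, but it has both an acknowledged hole (the inductive step, which you flag as the main obstacle and do not carry out) and a concrete misstep in the way that inductive step is framed.

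The misstep: you propose \emph{prepending} a type-1 index $s$ to $\textbf{t}_w$, but prepending to $\textbf{t}_w$ (equivalently, appending to $rev(\textbf{t}_w)$) inserts $M^P_s$ into the \emph{interior} of the product — between $M^P_{rev(\textbf{t}_w)}$ and $(\lam M^P_{\textbf{z}} - M^P_{\textbf{w}})$ on one side, and between $M^P_{\textbf{c}_w}$ and $M^P_{\textbf{t}_w}$ on the other — which does not produce any clean relation to $L(\lam)$. The operation that does yield a two-sided multiplication is \emph{appending}: with $\textbf{t}_w' := (\textbf{t}_w, s)$ one gets, after commuting $M^P_s$ past the disjoint negative-index factors, $L'(\lam) = M^P_s\, L(\lam)\, M^P_s$. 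Note also that this is a sandwich, not a conjugation: $(M^P_s)^{-1} = M^P_{-s} \neq M^P_s$, and for skew-symmetric $P$ one has $(M^P_s)^T = M_s(A_s) \neq M^P_s$, so the update rule for the quasi-identity $Q$ under $L \mapsto M^P_s L M^P_s$ is not a simple sign-permutation and requires precisely the case analysis you omit. Two further gaps: the base case $\textbf{t}_w = \textbf{t}_z = \emptyset$ is the same pencil shape as in Theorem~\ref{eventhm}, but that theorem delivers a $Q$ yielding a $T$-even or $T$-odd pencil, not a skew-symmetric one, so the skew-symmetric base case for skew-symmetric $P$ needs its own computation; and your uniqueness claim rests on the assertion that the block pattern of $L(\lam)$ is ``connected'' under the induced sign constraints, which is stated but not argued and is not obvious for an arbitrary admissible $\textbf{z}$. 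The overall induction strategy is sound in spirit, but as written the key steps are either missing or would derail if carried out literally.
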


We now construct skew-symmetric Rosenbrock strong linearizations of $G(\lam).$

\begin{theorem}  \label{thm:skew-symmG} Let $G(\lam)$ be skew-symmetric and $\mathcal{S} (\lam) $ be a skew-symmetric realization of $G(\lam)$ as in Proposition~\ref{prop:skew_symm}(b). Let $h, \textbf{w}, \textbf{c}_w, \textbf{t}_w, \textbf{z}, \textbf{c}_z$ and $\textbf{t}_z$ be as in Theorem~\ref{thm:skew_sym}. Consider the GFPR  $	\mathbb{L}(\lambda)  := \mathbb{M}^P_{rev(\textbf{t}_z)}  \mathbb{M}^P_{rev(\textbf{t}_w)}   \big( \lambda \mathbb{M}^\mathcal{S}_{\textbf{z}} - \mathbb{M}^\mathcal{S}_{\textbf{w}} \big)  \mathbb{M}^P_{\textbf{c}_w} \mathbb{M}^P_{\textbf{t}_w}  \mathbb{M}^P_{\textbf{c}_z}  \mathbb{M}^P_{\textbf{t}_z}$ associated with $\mathcal{S}(\lam)$. Then  there exists a unique quasi-identity matrix $\mathbb{Q} := \diag(\mathbf{s} \, Q, I_r)$  such that
	\begin{align*}
	\mathbb{Q}\mathbb{L}(\lambda) & =
	\left[
	\begin{array}{c|c}
	\mathbf{s} \, Q L(\lam) & - e_{m-\alpha} \otimes B^T \\[.1em] \hline \\[-1em]
	e^T_{m-\alpha} \otimes B  &   \lam E -A  \\
	\end{array}
	\right],
	\end{align*}
	is   skew-symmetric, where  $Q$ and $L(\lam)$ are as in Theorem~\ref{thm:skew_sym} and $\mathbf{s}$ is the $(m - \alpha)$-th parameter of $Q$ with $\alpha:= c_0( \textbf{w}, \textbf{c}_w, \textbf{t}_w). $

Assume that $Ind(\textbf{z} +m) =0$ when the leading coefficient $A_m$ of $P(\lam)$ is singular. Further, suppose that $0 \notin \textbf{t}_{w}$ (resp., $-m \notin \textbf{t}_{z}$) when $A_0$ (resp., $A_m$) is singular. Then $\mathbb{Q} \mathbb{L}(\lam)$ is a skew-symmetric Rosenbrock strong linearization of $G(\lam)$. The transfer function $\mathbb{G}(\lam) := \mathbf{s}\, QL(\lam)  + (e_{m-\alpha}\otimes B^T ) (\lam E - A)^{-1} (e_{m-\alpha}^T\otimes B)$ of $\mathbb{Q} \mathbb{L}(\lam)$ is  skew-symmetric.
\end{theorem}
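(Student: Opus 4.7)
The plan is to follow the template established by Theorems~\ref{thm:Hamil:G} and \ref{thm:skewHamil:G}, adapted to the skew-symmetric setting with the additional outer tuples $\textbf{t}_w,\textbf{t}_z$ and their reversals. First I would apply Theorem~\ref{gfprptoge_RCh4} with the identifications $\sigma = \textbf{w}$, $\tau = \textbf{z}$, $\sigma_1 = rev(\textbf{t}_w)$, $\sigma_2 = (\textbf{c}_w,\textbf{t}_w)$, $\tau_1 = rev(\textbf{t}_z)$, $\tau_2 = (\textbf{c}_z,\textbf{t}_z)$, and all matrix assignments trivial. Using the skew-symmetric system matrix of Proposition~\ref{prop:skew_symm}(b) (with $C=-B^T$ and $(2,2)$ block $\lam E - A$), this yields
$$\mathbb{L}(\lam) = \left[\begin{array}{c|c} L(\lam) & -e_{m-i_0(rev(\textbf{t}_w),\textbf{w})}\otimes B^T \\ \hline e^T_{m-c_0(\textbf{w},\textbf{c}_w,\textbf{t}_w)}\otimes B & \lam E - A \end{array}\right],$$
where $L(\lam)$ is the pencil in Theorem~\ref{thm:skew_sym}.

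The next step is the combinatorial identity $i_0(rev(\textbf{t}_w),\textbf{w}) = c_0(\textbf{w},\textbf{c}_w,\textbf{t}_w) =: \alpha$, which is essential for the off-diagonal blocks to occupy mirrored positions. Using the reversal rule $c_k(\beta) = i_k(rev(\beta))$ and the explicit form $\textbf{w} = (h-1{:}h, \ldots, 1{:}2, 0)$, $\textbf{c}_w = (h-1, h-3, \ldots, 1)$ for even $h$, the identity reduces to matching a leading $0$-inversion chain on the left of $\textbf{w}$ with a trailing $0$-consecution chain on the right. Because $\textbf{t}_w$ is a right type-1 tuple relative to $rev(\textbf{w})$, its reversal $rev(\textbf{t}_w)$ carries the complementary combinatorial structure, and an induction on the length of $\textbf{t}_w$ shows each appended type-1 index extends both chains by the same amount.

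With $\mathbf{s}$ denoting the $(m-\alpha)$-th parameter of the quasi-identity $Q$ from Theorem~\ref{thm:skew_sym}, the computation of $\mathbb{Q}\mathbb{L}(\lam)$ uses $Q(e_{m-\alpha}\otimes I_n) = \mathbf{s}(e_{m-\alpha}\otimes I_n)$ and $\mathbf{s}^2 = 1$ to put the $(1,2)$ block in the form $-e_{m-\alpha}\otimes B^T$ while leaving the $(2,1)$ block as $e^T_{m-\alpha}\otimes B$. Skew-symmetry of $\mathbb{Q}\mathbb{L}(\lam)$ then follows block by block: $\mathbf{s}QL(\lam)$ is skew-symmetric by Theorem~\ref{thm:skew_sym}; $\lam E - A$ is skew-symmetric because $E$ and $A$ are; and the off-diagonal blocks are negative transposes of one another. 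Skew-symmetry of the transfer function $\mathbb{G}(\lam)$ is then immediate from its block form.

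For the Rosenbrock strong linearization property I would invoke Theorem~\ref{thm:gfpr:RSL}, which requires each matrix assignment at a $0$- or $(-m)$-position to be nonsingular. With trivial assignments these positions carry $-A_0$ and $A_m$. Since $0\notin \textbf{c}_w$ and, under $Ind(\textbf{z}+m)=0$, $-m\notin \textbf{c}_z$ (both via Remark~\ref{oddeven_deg}), the remaining $0$- and $(-m)$-positions lie only in $\textbf{t}_w$, $rev(\textbf{t}_w)$, $\textbf{t}_z$, $rev(\textbf{t}_z)$; the stated hypotheses $0\notin \textbf{t}_w$ when $A_0$ is singular and $-m\notin \textbf{t}_z$ when $A_m$ is singular cover precisely these cases. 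Left multiplication by the invertible $\mathbb{Q}$ preserves the strong linearization property. The main obstacle is the combinatorial identity $i_0(rev(\textbf{t}_w),\textbf{w}) = c_0(\textbf{w},\textbf{c}_w,\textbf{t}_w)$ above: unlike the Hamiltonian case of Theorem~\ref{thm:Hamil:G} where a short case split on $h$ sufficed, the nested type-1 structure of $\textbf{t}_w$ forces a more delicate tracking of $0$-consecutions and $0$-inversions through each appended string of $csf(rev(\textbf{w}))$.
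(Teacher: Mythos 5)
Your overall proof architecture coincides with the paper's: apply Theorem~\ref{gfprptoge_RCh4} with the same identifications to obtain the block form of $\mathbb{L}(\lambda)$, establish the combinatorial identity $i_0(rev(\textbf{t}_w),\textbf{w})=c_0(\textbf{w},\textbf{c}_w,\textbf{t}_w)$, compute $\mathbb{Q}\mathbb{L}(\lambda)$ via $Q(e_{m-\alpha}\otimes I_n)=\mathbf{s}(e_{m-\alpha}\otimes I_n)$ and $\mathbf{s}^2=1$, deduce skew-symmetry from Theorem~\ref{thm:skew_sym}, and invoke Theorem~\ref{thm:gfpr:RSL} after checking that the nonsingularity hypotheses on the trivial matrix assignments are met. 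The one genuine deviation is your treatment of the combinatorial identity, which you flag as harder than in the Hamiltonian case and propose to settle by an induction on the length of $\textbf{t}_w$ that exploits its type-1 structure. In fact the paper handles it with the very same short case split as Theorem~\ref{thm:Hamil:G}: for $h=0$ both sides vanish, and for $h>0$ even the explicit forms $\textbf{w}=(h-1{:}h,\ldots,1{:}2,0)$ and $\textbf{c}_w=(h-1,h-3,\ldots,3,1)$ contain exactly one $0$ (at the end of $\textbf{w}$), one $1$ (at the end of $\textbf{c}_w$, resp.\ inside the string $1{:}2$), and no $2$, so one reads off directly that $c_0(\textbf{w},\textbf{c}_w,\textbf{t}_w)=2+c_2(\textbf{t}_w)$ and $i_0(rev(\textbf{t}_w),\textbf{w})=2+i_2(rev(\textbf{t}_w))$; these coincide because $i_t(\beta)=c_t(rev(\beta))$ holds for \emph{every} index tuple $\beta$. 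The type-1 hypothesis on $\textbf{t}_w$ is never used in this verification — it is needed only so that Theorem~\ref{thm:skew_sym} supplies the quasi-identity $Q$ — so the induction you sketch, while presumably workable, introduces unnecessary difficulty and obscures the fact that the identity has nothing to do with the admissibility of $\textbf{t}_w$.
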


\begin{proof} By Theorem~\ref{gfprptoge_RCh4}, we have
	$$ \mathbb{L}(\lambda)  =
	\left[
	\begin{array}{c|c}
	L(\lam) &  e_{m-i_0( rev(\textbf{t}_w), \textbf{w})} \otimes (- B^T) \\[.1em] \hline \\[-1em]
	e^T_{m-c_0( \textbf{w}, \textbf{c}_w, \textbf{t}_w)} \otimes B  &  \lam E -A  \\
	\end{array}
	\right],$$
	where $L(\lam)$ is as in Theorem~\ref{thm:skew_sym}. Next, we show that $i_0( rev(\textbf{t}_w), \textbf{w}) = c_0( \textbf{w}, \textbf{c}_w, \textbf{t}_w)$. If $ h=0$ then $\textbf{w} = (0)$ and $\textbf{c}_w = \emptyset = \textbf{t}_w$. Thus $i_0( rev(\textbf{t}_w), \textbf{w}) = 0 = c_0( \textbf{w}, \textbf{c}_w, \textbf{t}_w)$. Next, suppose that $h > 0$. Then  we have $\textbf{w}= (h-1:h,h-3:h-2, \ldots,1:2,0)$ and $\textbf{c}_w= ( h-1, h-3, \ldots , 3 ,1).$ This implies that $c_0( \textbf{w}, \textbf{c}_w, \textbf{t}_w) = 2+c_2(\textbf{t}_w)$ and $i_0(rev(\textbf{t}_w),\textbf{w}) =  2+i_2(rev(\textbf{t}_w)) = 2+ c_2(\textbf{t}_w) $. Hence $i_0( rev(\textbf{t}_w), \textbf{w}) = c_0( \textbf{w}, \textbf{c}_w, \textbf{t}_w)$.
	
	By Theorem~\ref{thm:skew_sym}, we have $\mathbf{s}\, QL(\lambda)$ is  skew-symmetric. Note that $Q (e_{m-\alpha} \otimes I_n ) = \mathbf{s} \, (e_{m-\alpha} \otimes I_n ) $ and $ \mathbf{s}\mathbf{s}=1$. Consequently, we have
	\begin{equation} \label{skew:symm:form}
	\mathbb{Q}  \mathbb{L}(\lambda)  =
	\left[
	\begin{array}{@{}c@{\,}|@{\,}c@{}}
	\mathbf{s} \, Q L(\lam) &  \mathbf{s}\, Q (e_{m-\alpha} \otimes (- B^T)) \\[.1em] \hline \\[-1em]
	e^T_{m-\alpha} \otimes B  &  \lam E -A  \\
	\end{array}
	\right] =
	\left[
	\begin{array}{@{}c@{\,}|@{\,}c@{}}
	\mathbf{s} \, Q L(\lam) &  e_{m-\alpha} \otimes (- B^T) \\[.1em] \hline \\[-1em]
	e^T_{m-\alpha} \otimes B  &  \lam E -A  \\
	\end{array}
	\right].
	\end{equation}
Since $\mathbf{s}\, QL(\lambda)$ and $  \lam E -A$ are skew-symmetric, it follows from (\ref{skew:symm:form})  that  $ \mathbb{Q} \mathbb{L}(\lam)$ is skew-symmetric.

Since $0 \notin \textbf{t}_{w}$ (resp., $-m \notin \textbf{t}_{z}$) when $A_0$ (resp., $A_m$) is singular, the matrix assignments of $\textbf{t}_w , rev(\textbf{t}_w), \textbf{t}_z$ and $rev(\textbf{t}_z)$ are nonsingular. Hence by taking $ \sig : =\textbf{w}, \tau : = \textbf{z}, \sig_1 := rev(\textbf{t}_{w}), \sig_2 := (\textbf{c}_{w}, \textbf{t}_{w})$, $ \tau_1 := rev(\textbf{t}_{z})$ and $ \tau_2 := (\textbf{c}_{z}, \textbf{t}_{z})$, it follows from Theorem~\ref{thm:gfpr:RSL} that  $\mathbb{L}(\lam)$ is a Rosenbrock strong linearization of $\mathcal{S}(\lambda)$  if the matrix assignments for $\textbf{c}_{w}$ and $\textbf{c}_{z}$ are nonsingular. By the similar arguments as given in the proof Theorem~\ref{thm:Hamil:G}, it follows that the matrix assignments for  $\textbf{c}_{w}$ and $\textbf{c}_{z}$ are nonsingular.
 \end{proof}

\begin{example} Let $G(\lam) = \sum_{i=0}^5 \lambda^i A_i + B^T(\lam E-A)^{-1} B$ be skew-symmetric and $\mathcal{S} (\lam) $ be as in Proposition~\ref{prop:skew_symm}(b). Define $	 \mathbb{L}(\lambda)  :=    \big( \lambda \mathbb{M}^\mathcal{S}_{(-4:-3,-5)} - \mathbb{M}^\mathcal{S}_{(1:2,0)} \big)  \mathbb{M}^P_{1} \mathbb{M}^P_{-4}$ and $ \mathbb{Q} := \diag (I_n,-I_n, -I_n, -I_n, I_n,I_r)$. Then
	$$\mathbb{Q} \mathbb{L} (\lam) =
	\left[\begin{array}{ccccc|c}
	0 & -I_n & \lam I_n & 0 & 0  & 0\\
	I_n & - \lam A_{5} + A_{4} & -\lam A_{4} & 0 & 0 & 0\\
	-\lam I_n  & - \lam A_{4} & - \lam A_{3}  -A_{2} & -A_{1} & I_n & 0\\
	0 & 0 & -A_{1} & \lam A_{1} - A_{0} & - \lam I_n  & -B^T\\
	0 & 0 & -I_n & \lam I_n  & 0 &0\\ \hline
	0 & 0& 0& B& 0& \lam E - A \end{array}\right] $$
	is a skew-symmetric Rosenbrock strong linearization of $G(\lam)$. Observe that $\mathbb{Q} \mathbb{L} (\lam)$ is a block penta-diagonal pencil.
	
Next, let $G(\lam) = \sum_{i=0}^4 \lambda^i A_i + B^T(\lam E-A)^{-1} B$ be skew-symmetric. Consider the GFPR $ \mathbb{L}(\lam) = (\lam \mathbb{M}^\mathcal{S}_{(-4:-3)} - \mathbb{M}^\mathcal{S}_{(1:2,0)}) \mathbb{M}^P_{1} \mathbb{M}^P_{-4}$ and $\mathbb{Q}= \diag (I_n, I_n, I_n, -I_n,I_r)$. Then
	$$ \mathbb{Q} \mathbb{L} (\lam) =   \left[\begin{array}{cccc|c}
	-A_{4} & \lam A_{4} & 0 & 0 &0\\
	\lam A_{4} & \lam A_{3} + A_2  & A_{1} & - I_n &0\\
	0 & A_{1} & - \lam A_{1} + A_{0} & \lam I_n & -B^T \\
	0 &  I_n  & -\lam I_n & 0 & 0\\ \hline \\[-1.1em]
	0 & 0& B & 0 & \lam E -A
	\end{array}\right]$$
	is a skew-symmetric Rosenbrock strong linearization of  $G(\lam)$ when $A_4$ is nonsingular.

\end{example}

Next, we construct skew-Hamiltonian strong linearizations of $G(\lam).$

\begin{theorem}  \label{thm:skew-symmJS} Let $G(\lam)$ be skew-symmetric and  $\mathcal{S} (\lam) $ be a skew-Hamiltonian realization of $G(\lam)$ as in  Proposition~\ref{prop:skew_symm}(a). Let $ \textbf{w}, \textbf{c}_w, \textbf{t}_w, \textbf{z}, \textbf{c}_z$ and $\textbf{t}_z$ be as in Theorem~\ref{thm:skew-symmG}. Suppose that $0 \notin \textbf{t}_{w}$ (resp., $-m \notin \textbf{t}_{z}$) when $A_0$ (resp., $A_m$) is singular. Assume that $Ind(\textbf{z} +m) =0$ when $A_m$ is singular. Then
	$$\mathbb{T} (\lambda) :   =
	\left[
	\begin{array}{@{}c|c@{}}
	\mathbf{s} \,Q L(\lam) & - e_{m- \alpha} \otimes B^T J^T \\[.1em] \hline \\[-1em]
	e^T_{m- \alpha} \otimes B  & \lam I_r - A \\
	\end{array}
	\right]$$  is a skew-Hamiltonian  Rosenbrock strong linearization of  $G(\lam)$, where $\alpha$ and $\mathbf{s} \,  QL(\lam)$ are as in Theorem~\ref{thm:skew-symmG}. The transfer function $\mathbb{G}(\lam) := \mathbf{s} \, QL(\lam)  + (e_{m-\alpha}\otimes B^T J^T) (\lam I_r - A)^{-1} (e_{m-\alpha}^T\otimes B)$ of  $ \mathbb{T}(\lam)$  is  skew-symmetric.
\end{theorem}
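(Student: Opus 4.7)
The plan is to deduce this theorem from Theorem~\ref{thm:skew-symmG} by the same device that was used to pass from Theorem~\ref{thm:Hamil:G} to its Hamiltonian counterpart. First, I would define the auxiliary system matrix $\widehat{\mathcal{S}}(\lam) := \mathbb{J}_{n,r}\mathcal{S}(\lam)$. Writing it out gives
\[ \widehat{\mathcal{S}}(\lam) = \left[\begin{array}{@{}c|c@{}} P(\lam) & -B^T J^T \\ \hline JB & \lam J - JA\end{array}\right], \]
which is a skew-symmetric realization of $G(\lam)$ in the sense of Proposition~\ref{prop:skew_symm}(b) with $\widehat{B} := JB$, $\widehat{E} := J$, $\widehat{A} := JA$: we have $\widehat{E}^T = -\widehat{E}$ since $J^T = -J$, and $\widehat{A}^T = -\widehat{A}$ precisely because $A$ is skew-Hamiltonian. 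The transfer function is unchanged under nonsingular left multiplication by $\mathbb{J}_{n,r}$ and irreducibility is preserved since $\mathbb{J}_{n,r}$ is invertible, so $\widehat{\mathcal{S}}(\lam)$ is a minimal skew-symmetric realization of $G(\lam)$.

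Next, applying Theorem~\ref{thm:skew-symmG} to $\widehat{\mathcal{S}}(\lam)$ with the same choice of $h, \textbf{w}, \textbf{c}_w, \textbf{t}_w, \textbf{z}, \textbf{c}_z, \textbf{t}_z$ yields a skew-symmetric Rosenbrock strong linearization
\[ \widehat{\mathbb{L}}(\lam) = \left[\begin{array}{@{}c|c@{}} \mathbf{s}\,QL(\lam) & -e_{m-\alpha}\otimes B^TJ^T \\ \hline e^T_{m-\alpha}\otimes JB & \lam J - JA\end{array}\right] \]
of $\widehat{\mathcal{S}}(\lam)$, where the hypotheses on $\textbf{t}_w$, $\textbf{t}_z$ and $Ind(\textbf{z}+m)$ are exactly those imposed here. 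The Kronecker-product identity $J(e^T_{m-\alpha}\otimes B) = e^T_{m-\alpha}\otimes JB$ then gives the factorization $\widehat{\mathbb{L}}(\lam) = \mathbb{J}_{mn,r}\mathbb{T}(\lam)$. Because $\widehat{\mathbb{L}}(\lam)^T = -\widehat{\mathbb{L}}(\lam)$, this immediately yields $(\mathbb{J}_{mn,r}\mathbb{T}(\lam))^T = -\mathbb{J}_{mn,r}\mathbb{T}(\lam)$, which is precisely the definition of $\mathbb{T}(\lam)$ being skew-Hamiltonian.

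Finally, I would transfer the Rosenbrock strong linearization property from $\widehat{\mathbb{L}}$ to $\mathbb{T}$. The key observation is $\mathbb{J}_{mn,r} = \diag(I_{(m-1)n}, \mathbb{J}_{n,r})$, so conjugating the strong equivalence that realizes Definition~\ref{stln}(a) for $\widehat{\mathbb{L}}$ by $\mathbb{J}_{mn,r}^{-1}$ only replaces the $r\times r$ corner blocks $\widehat{U}_0$ and $\widehat{\mathcal{S}}$ by $\mathbb{J}_{n,r}^{-1}\widehat{U}_0\mathbb{J}_{n,r}$ and $\mathcal{S}$, preserving the required block structure and leaving $U(\lam), V(\lam), V_0$ untouched. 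For Definition~\ref{stln}(b), the transfer functions of $\mathbb{T}(\lam)$ and $\widehat{\mathbb{L}}(\lam)$ agree after a one-line cancellation of the $J$-factors in $(J(\lam I_r - A))^{-1}$, so the biproper equivalence guaranteed for $\widehat{\mathbb{L}}$ is automatically a biproper equivalence for $\mathbb{T}$; in particular $\mathbb{G}(\lam)$ is skew-symmetric because $\widehat{\mathbb{L}}(\lam)$ is. The only real obstacle is careful bookkeeping of signs, Kronecker factors, and block sizes; once the identification $\widehat{\mathbb{L}}(\lam) = \mathbb{J}_{mn,r}\mathbb{T}(\lam)$ is in place, everything else follows from Theorem~\ref{thm:skew-symmG}.
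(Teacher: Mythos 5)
Your proposal follows exactly the paper's route: pass to $\widehat{\mathcal{S}}(\lam)=\mathbb{J}_{n,r}\mathcal{S}(\lam)$, check this is a skew-symmetric realization, apply Theorem~\ref{thm:skew-symmG} to obtain $\widehat{\mathbb{L}}(\lam)$, observe $\widehat{\mathbb{L}}(\lam)=\mathbb{J}_{mn,r}\mathbb{T}(\lam)$, and then transfer both the skew-Hamiltonian structure and the Rosenbrock strong linearization property. The extra detail you supply on why the biproper and unimodular equivalences survive conjugation by $\mathbb{J}_{mn,r}$ is correct and simply makes explicit a step the paper states without elaboration.
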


\begin{proof}  Define $\widehat{\mathcal{S}}(\lam) : = \mathbb{J}_{n,r} \mathcal{S} (\lam)   = \left[ \begin{array}{@{}c|c@{}} P(\lam) &  - B^T J^T \\ \hline  JB & \lam J - JA \end{array} \right]. $ Since $ A$ is skew-Hamiltonian, we have $\lam J - JA $ is skew-symmetric. Hence $ \widehat{\mathcal{S}} (\lam)$ is skew-symmetric as $P(\lam)$ and $\lam J - JA$ are skew-symmetric. Now by Theorem~\ref{thm:skew-symmG},
	\begin{equation} \label{eqn:2ndfSS}
	\widehat{\mathbb{L}} (\lambda) : =\left[
	\begin{array}{@{}c|c@{}}
	\mathbf{s}\,Q L(\lam) & - e_{m-\alpha} \otimes  B^TJ^T \\[.1em] \hline \\[-1em]
	e^T_{m-\alpha} \otimes J B & \lam J - J A \\
	\end{array}
	\right]
	\end{equation}
is a skew-symmetric Rosenbrock strong linearizations of $\widehat{\mathcal{S}} (\lam)$, where $\alpha$ and  $\mathbf{s}\, QL(\lam)$ are as in Theorem~\ref{thm:skew-symmG}. Note that $\widehat{\mathbb{L}} (\lambda) = \mathbb{J}_{mn,r} \mathbb{T} (\lambda)$. Since  $ \widehat{\mathbb{L}} (\lambda)$ is skew-symmetric, it follows that $\mathbb{T}(\lam)$ is skew-Hamiltonian, that is, $(\mathbb{J}_{mn,r} \mathbb{T} (\lambda))^T = - \mathbb{J}_{mn,r} \mathbb{T} (\lambda).$  Further, since $\widehat{\mathbb{L}} (\lambda)$ is a Rosenbrock strong linearization of $\widehat{\mathcal{S}} (\lam)$ and $ \widehat{\mathcal{S}} (\lam) = \mathbb{J}_{n,r} \mathcal{S} (\lam)$, it follows that $\mathbb{T}(\lam) $ is a  Rosenbrock strong linearization of $\mathcal{S} (\lam)$. Obviously $\mathbb{G}(\lam)$ is skew-symmetric and is the
 transfer function of  $\mathbb{T} (\lam).$
\end{proof}

\section{Recovery of eigenvectors and minimal bases} \label{fprsection_RCh4} We now describe the recovery of eigenvectors, minimal bases and minimal indices of $G(\lam)$ from those of the GFPRs of $G(\lam)$.  We need the following result.

\begin{theorem} \cite{rafiran1, Verghese_VK_79} \label{StoG}  Let $G(\lam)$ and $\mathcal{S}(\lam)$ be as in (\ref{minrel2_RCh4}) and (\ref{slamsystemmatrix_RCh4}), respectively.
	
	(I) Suppose that  $G(\lam)$ is singular. Let $ Z(\lam) :=\left[\begin{array}{c} Z_n(\lam) \\ Z_r(\lam) \end{array}\right]  $ be a matrix polynomial, where $Z_n(\lam)$ has $n$ rows and $ Z_r(\lam)$ has $r $ rows. If $Z(\lam)$  is a right  (resp., left) minimal  basis of $ \mathcal{S}(\lam)$  then $ Z_n(\lam)$ is a right (resp., left) minimal basis of $G(\lam).$  Further, the right (resp., left) minimal indices of $G(\lambda)$ and $\mathcal{S}(\lambda)$ are the same.

	(II) Suppose that $G(\lam)$ is regular and $ \mu \in \C$ is an eigenvalue of $G(\lam)$. Let $ Z := \left[\begin{array}{c} Z_{n} \\  Z_r \end{array}\right]$ be an $(n+r)\times p $ matrix such that $\rank(Z) = p,$ where $ Z_{n}$ has $ n$ rows and $Z_r$ has $r$ rows. If $Z$ is a  basis of $\mathcal{N}_r(\mathcal{S} (\mu))$ (resp., $\mathcal{N}_l(\mathcal{S} (\mu))$) then $Z_n$  is a  basis of $\mathcal{N}_r(G(\mu))$ (resp., $\mathcal{N}_l(G(\mu))$).
\end{theorem}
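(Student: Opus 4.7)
The plan is to exploit the rational block factorization
\begin{equation*}
\begin{bmatrix} I_n & -C(A-\lam E)^{-1} \\ 0 & I_r \end{bmatrix} \mathcal{S}(\lam) \begin{bmatrix} I_n & 0 \\ -(A-\lam E)^{-1} B & I_r \end{bmatrix} = \begin{bmatrix} G(\lam) & 0 \\ 0 & A-\lam E \end{bmatrix},
\end{equation*}
valid over $\C(\lam)$ since $E$ is nonsingular (so $\lam E - A$ is regular). Both outer factors are biproper with identity diagonal blocks, so this yields a $\C(\lam)$-linear isomorphism $\mathcal{N}_r(G) \to \mathcal{N}_r(\mathcal{S})$ via $z(\lam) \mapsto \bigl[z;\,(\lam E-A)^{-1}Bz\bigr]$, and a symmetric one $\mathcal{N}_l(G) \to \mathcal{N}_l(\mathcal{S})$ via $w \mapsto \bigl[w;\,(\lam E-A)^{-T}C^T w\bigr]$. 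For part (II), minimality of the realization gives $\eig(\mathcal{S})=\mathrm{Sp}(G)$, which is disjoint from the poles of $G$; hence an eigenvalue $\mu \in \eig(G)$ is not a finite eigenvalue of $\lam E - A$, so $A-\mu E$ is invertible and the bijection above specializes to a $\C$-linear isomorphism $\mathcal{N}_r(G(\mu)) \to \mathcal{N}_r(\mathcal{S}(\mu))$. Given a full-column-rank basis $Z=[Z_n;Z_r]$ of $\mathcal{N}_r(\mathcal{S}(\mu))$, the identity $Z_r = -(A-\mu E)^{-1} B Z_n$ recovers $Z$ from $Z_n$, so the columns of $Z_n$ remain linearly independent and span $\mathcal{N}_r(G(\mu))$; the left case is analogous.

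For part (I) I would prove equality of minimal-basis orders in two directions. \textbf{(a)} If $Z=[Z_n;Z_r]$ is a right minimal basis of $\mathcal{S}(\lam)$, then from $B Z_n + (A-\lam E)Z_r = 0$ and nonsingularity of $E$, comparing leading coefficients in $\lam$ on each column forces $\deg(Z_r e_i) < \deg(Z_n e_i)$, hence $\deg(Z e_i) = \deg(Z_n e_i)$ and $\mathrm{Ord}(Z) = \mathrm{Ord}(Z_n)$. Since $Z$ is recovered from $Z_n$ over $\C(\lam)$, the columns of $Z_n$ are linearly independent, so $Z_n$ is a polynomial basis of $\mathcal{N}_r(G)$ with the same column degrees as $Z$. \textbf{(b)} Conversely, given any polynomial basis $Z'_n$ of $\mathcal{N}_r(G)$, set $Z'_r := -(A-\lam E)^{-1}B Z'_n$, \emph{a priori} only rational. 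Assuming $Z'_r$ is polynomial (see below), the same degree estimate gives $\mathrm{Ord}([Z'_n;Z'_r]) = \mathrm{Ord}(Z'_n)$, whence minimality of $Z$ forces $\mathrm{Ord}(Z'_n) \geq \mathrm{Ord}(Z_n)$. Combining (a) and (b), $Z_n$ is a right minimal basis of $G$ with the same right minimal indices as $Z$; the left case is symmetric.

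The main obstacle is the polynomiality of $Z'_r$ in direction (b). Its only possible poles are the finite eigenvalues $\mu$ of $\lam E - A$. At such a $\mu$, I would expand $Z'_r$ in a Laurent series in $\lam-\mu$ and let $v$ be the leading negative-order coefficient. Equating the corresponding Laurent terms in $B Z'_n + (A-\lam E)Z'_r = 0$ forces $(A-\mu E)v = 0$, while equating them in $P Z'_n + C Z'_r = 0$ (whose first summand is polynomial, so contributes nothing to the principal part) forces $C v = 0$. The irreducibility of $\mathcal{S}(\lam)$ — precisely the observability condition $\mathrm{rank}\bigl[\begin{smallmatrix} C \\ A-\mu E \end{smallmatrix}\bigr] = r$ for every $\mu \in \C$ — then yields $v=0$, and iterating on the next negative Laurent coefficient kills the entire principal part, so $Z'_r$ is polynomial. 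Minimality of the realization is essential at exactly this point: without observability there are polynomial null vectors of $G(\lam)$ that do not lift to polynomial null vectors of $\mathcal{S}(\lam)$, and the transfer of minimal indices would fail.
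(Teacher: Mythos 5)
The paper states Theorem~\ref{StoG} with citations to \cite{rafiran1} and \cite{Verghese_VK_79} and gives no proof of its own, so there is no in-paper argument to compare against. On its own merits your proof is correct and takes what is, as far as I can tell, the canonical route: the Schur-complement block factorization of $\mathcal{S}(\lam)$ identifying $\mathcal{N}_r(G)$ with $\mathcal{N}_r(\mathcal{S})$ over $\C(\lam)$; a leading-coefficient comparison using nonsingularity of $E$ to equate column degrees; and minimality of the realization (observability for the right null space, controllability for the left) to guarantee that polynomial null vectors of $G(\lam)$ lift to polynomial null vectors of $\mathcal{S}(\lam)$.

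Two small points are worth stating explicitly, though neither affects correctness. In direction (a), the estimate $\deg(Z_r e_i) < \deg(Z_n e_i)$ tacitly assumes $Z_n e_i \neq 0$; this holds because $Z_n e_i = 0$ together with $(A - \lam E) Z_r e_i = 0$ and $\det(A - \lam E) \not\equiv 0$ would force $Z_r e_i = 0$, giving a zero column in a basis. In direction (b), for the left null space the relevant irreducibility condition is controllability, $\rank\,[\,B \ \ A - \lam E\,] = r$, rather than observability, and spelling this out (rather than saying the case is ``symmetric'') would make the dependence on the full minimality hypothesis explicit. You correctly isolate where minimality is indispensable --- the passage from a polynomial basis of $\mathcal{N}_r(G)$ back to a polynomial basis of $\mathcal{N}_r(\mathcal{S})$ --- which is indeed the crux of part (I).
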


 Thus, in view of Theorem~\ref{StoG}, we only need to describe the recovery of eigenvectors, minimal bases and minimal indices of $\mathcal{S}(\lam)$ from those of the GFPRs of $G(\lam).$ To that end, we need the following result.

\begin{theorem} \cite{rafinami3, rafiran2} \label{eigpgf_RCh4} Consider the GF pencil  $\mathbb{T}_{\omega}(\lambda) := \lambda \mathbb{M}^\mathcal{S}_{-\omega_{1}} - \mathbb{M}^\mathcal{S}_{\omega_{0}}$ of $G(\lam)$ associated with a permutation $\omega := ({\omega_{0}}, {\omega_{1}})$ of $\{0: m\}$, where $0 \in \omega_0$ and $m \in \omega_1$. Then we have the following:

	{\bf (I)~Minimal bases}.  Suppose that $\mathcal{S}(\lam)$ is singular.   Then the maps
	\begin{equation*}
	\begin{array}{l}
	\mathbb{F}^{\scalebox{.4}{PGF}
	}_{\omega}(\mathcal{S}): \mathcal{N}_r(\mathbb{T}_\omega) \rightarrow \mathcal{N}_r(\mathcal{S}),~ \left[ \begin{array}{@{}c@{}}
	u(\lambda)\\ \hline
	v(\lambda) \\
	\end{array}
	\right] \mapsto \left[ \begin{array}{@{}c@{}}
	(e^T_{m-c_0(\omega_0)} \otimes I_n )u(\lambda) \\[.2em] \hline \\[-1em]
	v(\lambda) \end{array} \right], \\ \\[-1em]
	\mathbb{K}^{\scalebox{.4}{PGF}
	}_{\omega}(\mathcal{S}): \mathcal{N}_l(\mathbb{T}_\omega) \rightarrow \mathcal{N}_l(\mathcal{S}),~ \left[ \begin{array}{@{}c@{}}
	u(\lambda)\\ \hline
	v(\lambda) \\
	\end{array}
	\right] \mapsto \left[ \begin{array}{@{}c@{}}
	(e^T_{m-i_0(\omega_0)} \otimes I_n )u(\lambda) \\[.2em] \hline \\[-1em]
	v(\lambda) \end{array} \right],
	\end{array}
	\end{equation*}
are linear isomorphisms, where $ u(\lambda)\in\mathbb{C}(\lambda)^{mn}$ and $ v(\lambda)\in\mathbb{C}(\lambda)^{r}.$ Further, $\mathbb{F}^{\scalebox{.4}{PGF}
}_{\omega}(\mathcal{S})$ (resp., $\mathbb{K}^{\scalebox{.4}{PGF}
}_{\omega}(\mathcal{S})$) maps a minimal basis of $\mathcal{N}_r(\mathbb{T}_\omega)$ (resp., $\mathcal{N}_l(\mathbb{T}_\omega)$) to a minimal basis of $\mathcal{N}_r(\mathcal{S})$ (resp., $\mathcal{N}_l(\mathcal{S})$).

 Let $ \omega_1$ be given by $ \omega_1 :=(  \omega^{\ell}_1 , m, \omega^{r}_1).$ Set $\alpha : = ( rev(\omega^{\ell}_1), \omega_0, rev(\omega^{r}_1)) $. Let $ c (\alpha)$ and $ i(\alpha)$ be the total number of consecutions and inversions of the permutation $\alpha$, respectively. If $\varepsilon_1\leq   \cdots \leq\varepsilon_p$ are the right (resp., left) minimal indices of $\mathbb{T}_{\omega}(\lambda)$ then $\varepsilon_1 -i(\alpha)  \leq \cdots \leq \varepsilon_p - i(\alpha) $ (resp., $\varepsilon_1 -c(\alpha) \leq \cdots \leq \varepsilon_p - c(\alpha) $) are the right (resp., left) minimal indices of  $\mathcal{S}(\lambda).$

	{\bf (II)~Eigenvectors.} Suppose that $\mathcal{S}(\lam)$ is regular and $\mu \in \mathbb{C}$ is an eigenvalue of $\mathcal{S}(\lam)$. Let $ Z:= \left[\begin{array}{c} Z_{mn} \\  Z_r \end{array}\right]$ be an $(mn+r)\times p $ matrix such that $\rank(Z) = p,$ where $ Z_{mn}$ has $ mn$ rows and $Z_r$ has $r$ rows. If $Z$ is a basis of $\mathcal{N}_r(\mathbb{T}_{\omega}(\mu))$ (resp., $\mathcal{N}_l(\mathbb{T}_{\omega}(\mu))$) then $\left[\begin{array}{@{}c@{}}
	(e^T_{m-c_0(\omega_0)} \otimes I_n ) Z_{mn}  \\[.3em]   Z_r \end{array} \right] $ (resp., $\left[\begin{array}{@{}c@{}} 	(e^T_{m-i_0(\omega_0)} \otimes I_n ) Z_{mn}  \\[.3em]   Z_r \end{array} \right] $)  is a basis of $\mathcal{N}_r(\mathcal{S}(\mu))$ (resp., $\mathcal{N}_l(\mathcal{S}(\mu))$).
\end{theorem}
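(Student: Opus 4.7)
The plan is to reduce the general GF pencil to a Fiedler pencil via constant strict equivalence and then transfer the recovery formulas. The crucial input, cited from \cite[Theorem~2.13]{rafinami3}, writes $\mathbb{T}_\omega(\lam)=\diag(\mathcal{X},X_0)\,\mathbb{F}_\alpha(\lam)\,\diag(\mathcal{Y},Y_0)$, where $\mathcal{X},\mathcal{Y}$ are constant nonsingular $mn\times mn$ matrices built from products of Fiedler matrices of $P(\lam)$, $X_0,Y_0$ are nonsingular, and $\mathbb{F}_\alpha(\lam)=\lam\mathbb{M}^\mathcal{S}_{-m}-\mathbb{M}^\mathcal{S}_\alpha$ is the Fiedler pencil of $G(\lam)$ associated with $\alpha=(\rev(\omega_1^\ell),\omega_0,\rev(\omega_1^r))$.

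First I would settle the Fiedler-pencil case $\omega=(\omega_0,(-m))$, so that $\mathbb{T}_\omega=\mathbb{F}_\alpha$ with $\alpha=\omega_0$. By Theorem~\ref{gfprptoge_RCh4}, $\mathbb{F}_\alpha(\lam)$ has the explicit system-matrix form coupling the matrix-polynomial Fiedler pencil $L_\alpha(\lam)$ of $P(\lam)$ to $A-\lam E$ through the blocks $e_{m-i_0(\alpha)}\otimes C$ and $e^T_{m-c_0(\alpha)}\otimes B$. Reading off the two block-rows of $\mathbb{F}_\alpha(\lam)w=0$ for $w=[u(\lam)^T,\ v(\lam)^T]^T$ gives $L_\alpha(\lam)u(\lam)+(e_{m-i_0(\alpha)}\otimes C)v(\lam)=0$ and $(e^T_{m-c_0(\alpha)}\otimes B)u(\lam)+(A-\lam E)v(\lam)=0$; the second already shows that the shorter vector formed by stacking the $(m-c_0(\alpha))$-th block of $u(\lam)$ on top of $v(\lam)$ lies in $\mathcal{N}_r(\mathcal{S})$. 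Bijectivity, with the inverse that reconstructs the remaining blocks of $u(\lam)$ from the block-tridiagonal structure of $L_\alpha$, is standard for Fiedler pencils of matrix polynomials~\cite{tdm}; here one combines it with the strict-equivalence identity of Theorem~\ref{FP:Rosen:stro:lin}(a). Preservation of minimal bases and the minimal-index shifts by $i(\alpha)$ on the right and $c(\alpha)$ on the left come from the analogous classical Fiedler-pencil result for $P(\lam)$~\cite{tdm}, combined with Theorem~\ref{StoG} which identifies the minimal indices of $\mathcal{S}$ and $G$ with those of $P$.

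Next I would transfer from $\mathbb{F}_\alpha$ to $\mathbb{T}_\omega$ using the constant bijections $w\mapsto\diag(\mathcal{Y},Y_0)^{-1}w$ on right null spaces, and its left analogue built from $\mathcal{X}$. Because both factors are constant, they preserve polynomial degrees and hence carry minimal bases to minimal bases and leave the minimal indices invariant. To pass from $c_0(\alpha),i_0(\alpha)$ to $c_0(\omega_0),i_0(\omega_0)$ in the extraction rows I would apply the row/column propagation identities~(\ref{-veBrow})--(\ref{-veBcol}), showing that the extra Fiedler blocks in $\mathcal{X},\mathcal{Y}$ coming from the negative indices in $\omega_1^\ell,\omega_1^r$ act as the identity on the $m$-th block unit vectors, and that $c_0(\alpha)=c_0(\omega_0)$ and $i_0(\alpha)=i_0(\omega_0)$ because the negative indices appended to form $\alpha$ cannot affect consecution or inversion counts at the nonnegative index $0$. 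Part~(II) is then part~(I) evaluated at $\lam=\mu$, with linear independence of columns replacing the minimal-basis property.

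The main obstacle is precisely this combinatorial bookkeeping: verifying that the polynomial-free multipliers $\mathcal{X}$ and $\mathcal{Y}$ transform the standard extraction rows and columns of $\mathbb{F}_\alpha$ into those of $\mathbb{T}_\omega$ with the correct parameters $c_0(\omega_0),i_0(\omega_0)$. This is a delicate identity-by-identity calculation mirroring the row/column manipulations used in the proof of Theorem~\ref{FP:Rosen:stro:lin}, but carried out for an arbitrary split $\omega_1=(\omega_1^\ell,m,\omega_1^r)$; once it is in place, the degree accounting needed for the minimal-index shift is routine, since $c(\alpha)$ and $i(\alpha)$ count the extra consecutions and inversions introduced by moving the $-m$ position within $\omega_1$.
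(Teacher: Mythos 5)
The paper itself does not prove this theorem; it is cited from \cite{rafinami3, rafiran2}, so your proposal can only be judged on its own terms. Your high-level strategy—peel off the constant factors $\mathbb{M}^\mathcal{S}_{-\omega_1^\ell}$ and $\mathbb{M}^\mathcal{S}_{-\omega_1^r}$ to reduce $\mathbb{T}_\omega(\lam)$ to the Fiedler pencil $\mathbb{F}_\alpha(\lam)$, settle the Fiedler case, and transport the recovery maps—is the right idea. But the combinatorial bridge you invoke is incorrect. You assert that $c_0(\alpha)=c_0(\omega_0)$ and $i_0(\alpha)=i_0(\omega_0)$ because ``negative indices cannot affect counts at $0$.'' First, $\alpha=(\mathrm{rev}(\omega_1^\ell),\omega_0,\mathrm{rev}(\omega_1^r))$ contains only nonnegative indices, so that rationale does not apply; second, the equality simply fails. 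Take $m=3$, $\omega_0=(0)$, $\omega_1=(3,1,2)$, so $\omega_1^\ell=\emptyset$, $\omega_1^r=(1,2)$, $\alpha=(0,2,1)$: here $c_0(\alpha)=1$ while $c_0(\omega_0)=0$. What actually makes the argument work is the \emph{composite} identity $(e^T_{m-c_0(\alpha)}\otimes I_n)\,M^P_{-\omega_1^r}=e^T_{m-c_0(\omega_0)}\otimes I_n$ (and its left analogue with $M^P_{-\omega_1^\ell}$ and $i_0$), established by repeated application of \eqref{-veBrow}--\eqref{-veBcol}. In your sketch you split this single fact into two separate sub-claims—``the extra Fiedler blocks act as the identity on the $m$-th unit block vector'' and ``the $c_0/i_0$ counts agree''—and each is false on its own; only the combination is true, and proving it is precisely where the work lies.

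A secondary gap: in the Fiedler base case you state that the second block-row $(e^T_{m-c_0(\alpha)}\otimes B)u(\lam)+(A-\lam E)v(\lam)=0$ ``already shows'' the extracted vector lies in $\mathcal{N}_r(\mathcal{S})$. It gives only the second of the two block equations defining $\mathcal{N}_r(\mathcal{S})$; the first, $P(\lam)\hat u(\lam)+Cv(\lam)=0$, must be deduced from the first block-row $L_\alpha(\lam)u(\lam)+(e_{m-i_0(\alpha)}\otimes C)v(\lam)=0$ via the Horner/block-tridiagonal structure of the polynomial Fiedler pencil, and that step is not automatic. Finally, Theorem~\ref{StoG} identifies minimal indices of $\mathcal{S}$ with those of $G$, not with those of $P$; the degree shift by $i(\alpha)$ (resp. $c(\alpha)$) must come from analyzing the extraction map on the Fiedler null vector directly, not from a comparison with the polynomial $P(\lam)$.
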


The pencil $\mathbb{T}_{\omega}(\lambda)$ in Theorem~\ref{eigpgf_RCh4} is referred to as a PGF (proper generalized Fiedler) pencil of $G(\lam)$ (also refer to as a PGF pencil of $\mathcal{S}(\lambda)$).


 For the rest of the paper, we only consider  GFPRs with nonsingular matrix assignments. Thus, if
$ \mathbb{L}(\lambda):=  \mathbb{M}_{(\tau_1,\sigma_1)} (Y_1,X_1) $ $(\lambda \mathbb{M}^\mathcal{S}_{\tau} -  \mathbb{M}^\mathcal{S}_{\sigma}) \mathbb{M}_{(\sigma_2,\tau_2)} (X_2,Y_2)$ is a GFPR of $\mathcal{S}(\lambda)$ then we assume that $X_j$ and $Y_j$, $j=1,2,$ are nonsingular matrix assignments.

\begin{theorem}\label{gfprrecoslame_RCh4} Let $ \mathbb{L}(\lambda):=  \mathbb{M}_{(\tau_1,\sigma_1)} (Y_1,X_1) (\lambda \mathbb{M}^\mathcal{S}_{\tau} -  \mathbb{M}^\mathcal{S}_{\sigma}) \mathbb{M}_{(\sigma_2,\tau_2)} (X_2,Y_2)$ be a  GFPR of $\mathcal{S}(\lambda)$. Let $ Z(\lam) := \left[\begin{array}{c} Z_{mn}(\lam) \\  Z_r(\lam) \end{array} \right]$ be an $(mn+r)\times p $ matrix polynomial, where $ Z_{mn}(\lam)$ has $ mn$ rows and $Z_r(\lam)$ has $r$ rows.
	
	\noin
	~(a) If $Z(\lam)$ is a right (resp., left) minimal basis of $\mathbb{L}(\lambda)$ then $\left[\begin{array}{@{}c@{}}
	(e^T_{m-c_0(\sigma, \sigma_2)} \otimes I_n ) Z_{mn}(\lambda) \\[.3em]   Z_r(\lam) \end{array} \right] $ (resp., $\left[\begin{array}{@{}c@{}} 	(e^T_{m-i_0(\sigma_1, \sigma)} \otimes I_n ) Z_{mn}(\lambda) \\[.3em]   Z_r(\lam) \end{array} \right] $)  is a right (resp., left) minimal basis of $\mathcal{S}(\lambda).$

	\noin
	~(b) Let $ \tau$ be given by $ \tau :=( \tau_{l}, -m, \tau_r).$ Set $\alpha : = \big( - rev(\tau_{l}), \sigma, -rev(\tau_r) \big) $. Let $c(\alpha) $ and $ i(\alpha)$ be the total number of consecutions and inversions of the permutation $\alpha$. If $\varepsilon_1\leq   \cdots \leq\varepsilon_p$ are the right (resp., left) minimal indices of $\mathbb{L}(\lambda)$ then $\varepsilon_1 -i(\alpha)  \leq \cdots \leq \varepsilon_p - i(\alpha) $ (resp., $\varepsilon_1 -c(\alpha) \leq \cdots \leq \varepsilon_p - c(\alpha) $) are the right (resp., left) minimal indices of  $\mathcal{S}(\lambda).$
	
\end{theorem}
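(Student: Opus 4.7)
My plan is to reduce the recovery problem for a general GFPR to the corresponding problem for a Fiedler pencil, which (being also a PGF pencil) is covered by Theorem~\ref{eigpgf_RCh4}. Setting $\alpha := (- rev(\tau_l),\, \sigma,\, - rev(\tau_r))$, a permutation of $\{0:m-1\}$, the argument in the proof of Theorem~\ref{thm:gfpr:RSL} yields
$$\mathbb{L}(\lambda) = \mathcal{A}\, \mathbb{T}(\lambda)\, \mathcal{B},$$
where $\mathbb{T}(\lambda) := \lambda \mathbb{M}^{\mathcal{S}}_{-m} - \mathbb{M}^{\mathcal{S}}_{\alpha}$ is a Fiedler pencil of $\mathcal{S}(\lambda)$ and
$\mathcal{A} := \mathbb{M}_{(\tau_1,\sigma_1)}(Y_1,X_1)\, \mathbb{M}^{\mathcal{S}}_{\tau_l}$, $\mathcal{B} := \mathbb{M}^{\mathcal{S}}_{\tau_r}\, \mathbb{M}_{(\sigma_2,\tau_2)}(X_2,Y_2)$ are constant invertible matrices of block-diagonal form $\mathrm{diag}(\star,\, I_r)$; the invertibility follows from the standing nonsingularity of all matrix assignments.

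Because $\mathcal{A},\mathcal{B}$ are independent of $\lambda$, the maps $z(\lambda) \mapsto \mathcal{B}\, z(\lambda)$ and $y(\lambda) \mapsto \mathcal{A}^{T}\, y(\lambda)$ are degree-preserving linear isomorphisms of $\mathcal{N}_r(\mathbb{L})$ onto $\mathcal{N}_r(\mathbb{T})$ and of $\mathcal{N}_l(\mathbb{L})$ onto $\mathcal{N}_l(\mathbb{T})$, respectively, so $\mathbb{L}(\lambda)$ and $\mathbb{T}(\lambda)$ share the same right and left minimal indices. Moreover, $\mathbb{T}(\lambda)$ is a PGF pencil of $\mathcal{S}(\lambda)$ corresponding to $\omega = (\omega_0,\omega_1) = (\alpha,\,(m))$ of $\{0:m\}$ (since $0 \in \alpha$, because $0 \in \sigma$, and $m \in \omega_1$), so Theorem~\ref{eigpgf_RCh4} applies. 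Part (b) follows immediately: the right (resp.\ left) minimal indices of $\mathbb{L}(\lambda)$ coincide with those of $\mathbb{T}(\lambda)$, and Theorem~\ref{eigpgf_RCh4} then gives the right (resp.\ left) minimal indices of $\mathcal{S}(\lambda)$ by subtracting $i(\alpha)$ (resp.\ $c(\alpha)$).

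For part (a), composing the two recovery maps shows that, for $z = \left[\begin{array}{c} Z_{mn}(\lambda) \\ Z_r(\lambda) \end{array}\right] \in \mathcal{N}_r(\mathbb{L})$, the image in $\mathcal{N}_r(\mathcal{S})$ has top block $(e^{T}_{m-c_0(\alpha)} \otimes I_n)\, M^{P}_{\tau_r}\, M_{(\sigma_2,\tau_2)}(X_2,Y_2)\, Z_{mn}(\lambda)$ and bottom $r$ rows $Z_r(\lambda)$; a symmetric calculation handles the left null space via $\mathcal{A}^T$. To match the claimed formulas it suffices, by Lemma~\ref{lemmblockrowncolp0221d17ee_RCh4}, to prove the two combinatorial identities
$$(e^{T}_{m-c_0(\alpha)} \otimes I_n)\, M^{P}_{\tau_r} = e^{T}_{m-c_0(\sigma)} \otimes I_n,\qquad M^{P}_{\tau_l}\, (e_{m-i_0(\alpha)} \otimes I_n) = e_{m-i_0(\sigma)} \otimes I_n.$$
The main technical obstacle is establishing these identities. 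Using the block-row and block-column action formulas for the negative Fiedler factors recorded in the proof of Theorem~\ref{FP:Rosen:stro:lin}, each factor $M^{P}_{-j}$ shifts the block-index by one precisely when $j$ equals the current index and leaves it fixed when $j$ differs from it by more than one. A careful induction on $|\tau_r|$ (resp.\ $|\tau_l|$) then shows that the cumulative shift equals $c_0(\alpha) - c_0(\sigma)$ (resp.\ $i_0(\alpha) - i_0(\sigma)$), by tracking how the consecutive integers beyond $c_0(\sigma)$ (resp.\ $i_0(\sigma)$) that lie in $- rev(\tau_r)$ (resp.\ $- rev(\tau_l)$) in the correct positional order are exactly those generating the extra consecutions (resp.\ inversions) in $\alpha$.
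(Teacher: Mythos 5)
Your decomposition $\mathbb{L}(\lambda) = \mathcal{A}\,\mathbb{T}(\lambda)\,\mathcal{B}$ with $\mathbb{T}(\lambda) = \lambda\mathbb{M}^{\mathcal{S}}_{-m} - \mathbb{M}^{\mathcal{S}}_{\alpha}$ a genuine Fiedler pencil is a different route from the paper's. The paper does not factor out $\tau_l$ and $\tau_r$: it keeps the PGF pencil $\mathbb{T}_{\omega}(\lambda) := \lambda\mathbb{M}^{\mathcal{S}}_{\tau} - \mathbb{M}^{\mathcal{S}}_{\sigma}$ intact (with $\omega_0 = \sigma$, $\omega_1 = -\tau$), so Theorem~\ref{eigpgf_RCh4} already produces the recovery map in terms of $e^T_{m-c_0(\sigma)}\otimes I_n$ and $e_{m-i_0(\sigma)}\otimes I_n$, and a single invocation of Lemma~\ref{lemmblockrowncolp0221d17ee_RCh4} then converts $c_0(\sigma)$ to $c_0(\sigma,\sigma_2)$ (and $i_0(\sigma)$ to $i_0(\sigma_1,\sigma)$). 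By contrast, your route pushes $\omega_0$ to $\alpha$, which forces you to bridge from $c_0(\alpha),\,i_0(\alpha)$ back to $c_0(\sigma),\,i_0(\sigma)$ through the action of $M^{P}_{\tau_r},\,M^{P}_{\tau_l}$, i.e.\ to prove the two identities you isolate. Your argument buys nothing over the paper's (it proves the same theorem from the same earlier results) but costs a combinatorial lemma that the paper avoids. For part~(b) the two arguments coincide, since you both end up invoking the minimal-index statement of Theorem~\ref{eigpgf_RCh4} for the same permutation $\alpha$.

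The combinatorial step you defer is where the real work hides, and your sketch of it is not airtight. You characterize the action of each factor $M^{P}_{-j}$ on $e^T_{m-q}\otimes I_n$ as ``shift by one when $j=q$, fixed when $|j-q|>1$,'' but the actual formula~(\ref{-veBrow}) says the row is fixed precisely when $j\notin\{q,q+1\}$ (so it is \emph{also} fixed when $j=q-1$), and, crucially, when $j=q+1$ the result is $e^T_{m-j}\otimes I_n + e^T_{m-j+1}\otimes A_j$, which is not a simple unit block row at all. If that case arises anywhere along the product $M^{P}_{\tau_r}$ the whole induction breaks. So the heart of the missing lemma is to show that, starting from $q=c_0(\alpha)$ and processing the factors of $M^{P}_{\tau_r}$ left to right, one never encounters a factor with index $q+1$. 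This does hold (because the indices $h+1,\ldots,h+s$ that extend the consecution chain in $\alpha$ each occur exactly once in $-\mathrm{rev}(\tau_r)$, in increasing positional order, and the maximality of $s=c_0(\alpha)-c_0(\sigma)$ rules out a yet-unprocessed $q+1$ sitting to the right of $q$), but this is precisely the detail your ``careful induction'' must carry, and it is not stated. Until it is, the two identities $(e^T_{m-c_0(\alpha)}\otimes I_n)M^{P}_{\tau_r}=e^T_{m-c_0(\sigma)}\otimes I_n$ and $M^{P}_{\tau_l}(e_{m-i_0(\alpha)}\otimes I_n)=e_{m-i_0(\sigma)}\otimes I_n$ are asserted rather than proved, and the proof of part~(a) is incomplete.
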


\begin{proof} We have $ \mathbb{L}(\lam) = U \, \mathbb{T}_\omega(\lambda) \, V $, where $ \mathbb{T}_\omega(\lambda) := \lambda \mathbb{M}^\mathcal{S}_{\tau} -  \mathbb{M}^\mathcal{S}_{\sigma}$ is a PGF pencil of $G(\lambda)$ associated with the permutation $\omega:=(\sigma,-\tau)$ of $\{0:m\}$, and $U : = \mathbb{M}_{(\tau_1,\sigma_1)} (Y_1,X_1)$ and $V : = \mathbb{M}_{(\sigma_2,\tau_2)} (X_2,Y_2)$. Since  $V$ is a nonsingular matrix, it is easily seen that the map $ V : \mathcal{N}_r(\mathbb{L}) \rightarrow \mathcal{N}_r(\mathbb{T}_\omega),~ z(\lambda) \mapsto V z(\lambda) ,$ is an isomorphism and maps a minimal basis of $\mathcal{N}_r(\mathbb{L}) $ to a minimal basis of $\mathcal{N}_r(\mathbb{T}_\omega)$. On the other hand, by Theorem \ref{eigpgf_RCh4},
	$ \mathbb{F}^{\scalebox{.4}{PGF}
	}_{\omega}(\mathcal{S}) : \mathcal{N}_r(\mathbb{T}_\omega) \rightarrow  \mathcal{N}_r( \mathcal{S}), \left[
	\begin{array}{@{}c@{}}
	x(\lam)\\ \hline
	y(\lambda) \\
	\end{array}
	\right]  \mapsto \left[
	\begin{array}{@{}c@{}}
	(e^T_{m-c_0(\sigma)}\otimes I_n)x(\lam) \\ \hline
	y(\lambda) \\
	\end{array}
	\right]  ,$  is an isomorphism and maps a minimal basis of $\mathcal{N}_r(\mathbb{T}_\omega)$ to a minimal basis of $\mathcal{N}_r(\mathcal{S})$, where $x(\lambda) \in \mathbb{C}(\lambda)^{mn}$ and $y(\lambda) \in \mathbb{C}(\lambda)^{r}$. Consequently, $\mathbb{F}^{\scalebox{.4}{PGF}}_{\omega}(\mathcal{S})  V: \mathcal{N}_r(\mathbb{L}) \rightarrow \mathcal{N}_r( \mathcal{S}),$ $ z(\lam) \mapsto \mathbb{F}^{\scalebox{.4}{PGF}}_{\omega}(\mathcal{S})  V z(\lam),$  is an isomorphism and maps a minimal basis of $\mathcal{N}_r(\mathbb{L})$ to a minimal basis of $\mathcal{N}_r(\mathcal{S})$. Now, by Lemma~\ref{lemmblockrowncolp0221d17ee_RCh4}, we have $\mathbb{F}^{\scalebox{.4}{PGF}}_{\omega}(\mathcal{S})  V = \mathbb{F}^{\scalebox{.4}{PGF}}_{\omega}(\mathcal{S}) \mathbb{M}_{(\sigma_2,\tau_2)}  (X_2,Y_2) =$
	$$ \left[
	\begin{array}{@{}c@{\,\,\,}|@{\,}c@{}}
	(e^T_{m-c_0(\sigma)} \otimes I_n)\,  M_{(\sigma_2,\tau_2)}  (X_2,Y_2) &  \\ \hline
	& I_r \\
	\end{array}
	\right]=
	\left[
	\begin{array}{@{}c@{\,\,\,}|@{\,}c@{}}
	e^T_{m-c_0(\sigma, \sigma_2)} \otimes I_n &  \\ \hline
	& I_r \\
	\end{array}
	\right],$$ and hence the desired result for the recovery of right minimal bases follows.

 Now we describe the recovery of left minimal bases. Since $U$ is a nonsingular matrix, it is easily seen that the map $ U^T: \mathcal{N}_l(\mathbb{L}) \rightarrow \mathcal{N}_l(\mathbb{T}_\omega),~ z(\lambda) \mapsto U^T z(\lambda) , $ is an isomorphism and maps a minimal basis of $\mathcal{N}_l(\mathbb{L}) $ to a minimal basis of $\mathcal{N}_l(\mathbb{T}_\omega)  $. On the other hand, by Theorem \ref{eigpgf_RCh4},
	$ \mathbb{K}^{\scalebox{.4}{PGF}
	}_{\omega}(\mathcal{S}) : \mathcal{N}_l(\mathbb{T}_\omega) \rightarrow  \mathcal{N}_l( \mathcal{S}), \left[
	\begin{array}{@{}c@{}}
	x(\lam)\\ \hline
	y(\lambda) \\
	\end{array}
	\right]  \mapsto \left[
	\begin{array}{@{}c@{}}
	(e^T_{m-i_0(\sigma)}\otimes I_n)x(\lam) \\ \hline
	y(\lambda) \\
	\end{array}
	\right]  ,$  is an isomorphism and maps a minimal basis of $\mathcal{N}_l(\mathbb{T}_\omega)$ to a minimal basis of $\mathcal{N}_l(\mathcal{S})$, where $x(\lambda) \in \mathbb{C}(\lambda)^{mn}$ and $y(\lambda) \in \mathbb{C}(\lambda)^{r}$. Consequently,  $\mathbb{K}^{\scalebox{.4}{PGF}}_{\omega}(\mathcal{S}) U^T  :\mathcal{N}_l(\mathbb{L}) \rightarrow \mathcal{N}_l(\mathcal{S})$, $z(\lam) \mapsto \mathbb{K}^{\scalebox{.4}{PGF}}_{\omega}(\mathcal{S}) U^T z(\lam),$ is an isomorphism and maps a minimal basis of $\mathcal{N}_l(\mathbb{L})$ to a minimal basis of $\mathcal{N}_l(\mathcal{S})$. Now $\mathbb{K}^{\scalebox{.4}{PGF}}_{\omega}(\mathcal{S}) U^T = \mathbb{K}^{\scalebox{.4}{PGF}}_{\omega}( \mathcal{S}) (  \mathbb{M}_{(\tau_1,\sigma_1)} (Y_1,X_1))^T
	= $
	$${\small  \left[
		\begin{array}{@{}c@{\,}|@{\,}c@{}}
		(e^T_{m-i_0(\sigma)} \otimes I_n) \big(M_{(\tau_1,\sigma_1)} (Y_1,X_1) \big)^T &  \\ \hline
		& I_r \\
		\end{array}
		\right] = \left[
		\begin{array}{@{}c@{\,}|@{\,}c@{}}
		\Big( M_{(\tau_1,\sigma_1)}(Y_1,X_1) (e_{m-i_0(\sigma)} \otimes I_n) \Big)^T&  \\ \hline
		& I_r \\
		\end{array}
		\right] } .$$ By Lemma~\ref{lemmblockrowncolp0221d17ee_RCh4}, we have
	$ M_{(\tau_1,\sigma_1)} (Y_1,X_1) \, (e_{m-i_0(\sigma)} \otimes I_n) =  e_{m-i_0(\sigma_1,\sigma)} \otimes I_n$. Hence the desired result for recovery of left minimal bases follows.

	Finally, let $\varepsilon_1\leq \cdots \leq\varepsilon_p$ be the right (resp., left) minimal indices of $\mathbb{L}(\lambda)$. Since the PGF pencil $\mathbb{T}_{\omega}(\lambda)$ is strictly equivalent to $ \mathbb{L}(\lambda)$, $\varepsilon_1\leq \cdots \leq\varepsilon_p$ are also the right (resp., left) minimal indices of $\mathbb{T}_{\omega}(\lambda)$. Hence by Theorem \ref{eigpgf_RCh4},  $\varepsilon_1 -i(\alpha)  \leq \cdots \leq \varepsilon_p - i(\alpha) $ (resp., $\varepsilon_1 -c(\alpha) \leq \cdots \leq \varepsilon_p - c(\alpha) $) are the right (resp., left) minimal indices of  $\mathcal{S}(\lambda).$
\end{proof}

The next result describes the recovery of  eigenvectors of $\mathcal{S}(\lam)$ from those of the GFPRs of $ \mathcal{S} (\lam)$ when $\mathcal{S}(\lam)$ is regular.

\begin{theorem} \label{gfprrecoslamregular_RCh4} Let $ \mathbb{L}(\lambda):=  \mathbb{M}_{(\tau_1,\sigma_1)} (Y_1,X_1) (\lambda \mathbb{M}^\mathcal{S}_{\tau} -  \mathbb{M}^\mathcal{S}_{\sigma}) \mathbb{M}_{(\sigma_2,\tau_2)}  (X_2,Y_2) $ be a  GFPR of $\mathcal{S}(\lambda)$. Suppose that $\mathcal{S}(\lam)$ is regular and $ \mu \in \C$ is an eigenvalue of $\mathcal{S}(\lam)$. Let $ Z := \left[\begin{array}{c} Z_{mn} \\  Z_r \end{array}\right]$ be an $(mn+r)\times p $ matrix such that $\rank(Z) = p,$ where $ Z_{mn}$ has $ mn$ rows and $Z_r$ has $r$ rows. If $Z$ is a  basis of $\mathcal{N}_r(\mathbb{L} (\mu))$ (resp., $\mathcal{N}_l(\mathbb{L} (\mu))$) then $\left[\begin{array}{@{}c@{}}
	(e^T_{m-c_0(\sigma, \sigma_2)} \otimes I_n ) Z_{mn} \\[.3em]   Z_r \end{array} \right] $ (resp., $\left[\begin{array}{@{}c@{}} 	(e^T_{m-i_0(\sigma_1, \sigma)} \otimes I_n ) Z_{mn}  \\[.3em]   Z_r \end{array} \right] $)  is a  basis of $\mathcal{N}_r(\mathcal{S}(\mu))$ (resp., $\mathcal{N}_l(\mathcal{S}(\mu))$).
	
\end{theorem}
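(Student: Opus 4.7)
The plan is to mirror the proof of Theorem~\ref{gfprrecoslame_RCh4} verbatim, the only difference being that we work with constant null-space bases at $\lambda=\mu$ rather than with polynomial minimal bases. First I would factor $\mathbb{L}(\lambda) = U\,\mathbb{T}_\omega(\lambda)\,V,$ where $\mathbb{T}_\omega(\lambda) := \lambda\mathbb{M}^{\mathcal{S}}_\tau - \mathbb{M}^{\mathcal{S}}_\sigma$ is the PGF pencil of $\mathcal{S}(\lambda)$ associated with $\omega := (\sigma,-\tau)$, and where $U := \mathbb{M}_{(\tau_1,\sigma_1)}(Y_1,X_1)$ and $V := \mathbb{M}_{(\sigma_2,\tau_2)}(X_2,Y_2)$ are constant nonsingular matrices (nonsingular because the matrix assignments are nonsingular).

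Consequently, for any $\mu\in\C$, right multiplication by $V$ is a linear isomorphism $\mathcal{N}_r(\mathbb{L}(\mu))\to \mathcal{N}_r(\mathbb{T}_\omega(\mu))$ that preserves rank; so if $Z$ is a basis of $\mathcal{N}_r(\mathbb{L}(\mu))$ then $VZ$ is a basis of $\mathcal{N}_r(\mathbb{T}_\omega(\mu))$. Applying Theorem~\ref{eigpgf_RCh4}(II) to $VZ$ produces a basis of $\mathcal{N}_r(\mathcal{S}(\mu))$, obtained by pre-multiplying $VZ$ by
\[
\mathbb{F}^{\scalebox{.4}{PGF}}_\omega(\mathcal{S}) = \left[\begin{array}{c|c} e^T_{m-c_0(\sigma)}\otimes I_n & \\ \hline & I_r\end{array}\right].
\]
Composing with $V$ and using Lemma~\ref{lemmblockrowncolp0221d17ee_RCh4}, which gives $(e^T_{m-c_0(\sigma)}\otimes I_n)\,M_{(\sigma_2,\tau_2)}(X_2,Y_2)= e^T_{m-c_0(\sigma,\sigma_2)}\otimes I_n$, we obtain
\[
\mathbb{F}^{\scalebox{.4}{PGF}}_\omega(\mathcal{S})\,V = \left[\begin{array}{c|c} e^T_{m-c_0(\sigma,\sigma_2)}\otimes I_n & \\ \hline & I_r \end{array}\right],
\]
which yields the stated recovery rule for right eigenvectors.

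For the left case I would dualize: left multiplication by $U^T$ (applied on the right by transposition) is a linear isomorphism $\mathcal{N}_l(\mathbb{L}(\mu)) \to \mathcal{N}_l(\mathbb{T}_\omega(\mu))$, and Theorem~\ref{eigpgf_RCh4}(II) then projects onto a basis of $\mathcal{N}_l(\mathcal{S}(\mu))$ via $\mathbb{K}^{\scalebox{.4}{PGF}}_\omega(\mathcal{S})$. The key simplification is $M_{(\tau_1,\sigma_1)}(Y_1,X_1)(e_{m-i_0(\sigma)}\otimes I_n) = e_{m-i_0(\sigma_1,\sigma)}\otimes I_n$ from Lemma~\ref{lemmblockrowncolp0221d17ee_RCh4}, giving the claimed form.

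I don't foresee a real obstacle: the argument is essentially a pointwise specialization at $\lambda=\mu$ of the polynomial-basis argument already worked out in Theorem~\ref{gfprrecoslame_RCh4}. The only point to be slightly careful about is the rank condition $\rank(Z)=p$: this is automatically transferred through both $V$ (since $V$ is invertible) and through the projection $\mathbb{F}^{\scalebox{.4}{PGF}}_\omega(\mathcal{S})$, because Theorem~\ref{eigpgf_RCh4}(II) already guarantees that the resulting $(n+r)\times p$ matrix is of rank $p$ and forms a basis of $\mathcal{N}_r(\mathcal{S}(\mu))$ (respectively $\mathcal{N}_l(\mathcal{S}(\mu))$).
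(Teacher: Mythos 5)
Your proposal is correct and matches the paper's intent exactly: the paper itself disposes of this theorem by saying "a verbatim proof of Theorem~\ref{gfprrecoslame_RCh4} together with part (II) of Theorem~\ref{eigpgf_RCh4} yields the desired results," which is precisely the pointwise specialization at $\lambda=\mu$ that you carry out, via the factorization $\mathbb{L}(\lambda)=U\,\mathbb{T}_\omega(\lambda)\,V$ and Lemma~\ref{lemmblockrowncolp0221d17ee_RCh4}.
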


\begin{proof} A verbatim proof of Theorem~\ref{gfprrecoslame_RCh4} together with part (II) of Theorem~\ref{eigpgf_RCh4} yields the desired results.
\end{proof}

Next, we briefly describe the recovery of eigenvectors, minimal bases and minimal indices of a structured $G(\lam)$ from those of the structured linearizations  discussed in Section~\ref{structured_lin_GFPR_Glam}.

Note that if $G(\lam)$ is singular then the left (resp., right) minimal indices of $G(\lam)$ and $ X G(\lam) Y$ are the same for any nonsingular matrices $X$ and $Y.$  Hence it follows that
if $G(\lam)$ is symmetric (resp., skew-symmetric, Hamiltonian, skew-Hamiltonian) then the left minimal indices of $G(\lam)$ are the same as the right minimal indices of $G(\lam).$ Consequently, if $\mathbb{L}(\lam)$ is a structure-preserving linearization of $G(\lam)$  considered in Section~\ref{structured_lin_GFPR_Glam} then the left minimal indices of $\mathbb{L}(\lam)$ are the same as the right minimal indices of $\mathbb{L}(\lam)$.
Since   $\mathbb{L}(\lam)$ is strictly equivalent to a GFPR $ \mathbb{T}(\lambda):=  \mathbb{M}_{(\tau_1,\sigma_1)} (Y_1,X_1) (\lambda \mathbb{M}^\mathcal{S}_{\tau} -  \mathbb{M}^\mathcal{S}_{\sigma}) \mathbb{M}_{(\sigma_2,\tau_2)} (X_2,Y_2)$  of $G(\lambda)$,  the left and right minimal indices of $\mathbb{T}(\lambda)$ are the same. Let $\tau$ be given by $\tau=(\tau_{\ell}, -m , \tau_r)$. Define $\alpha:= (-rev(\tau_{\ell}), \sigma, -rev(\tau_r))$. Then $\alpha$ is a permutation of $\{0:m-1\}$. Let $c(\alpha)$ and $i(\alpha)$, respectively, be the total number of consecutions and inversions of $\alpha$. Let $\varepsilon_1\leq \cdots \leq\varepsilon_k$ be the minimal (left and right) indices of $\mathbb{T}(\lambda)$. Then by Theorem \ref{gfprrecoslame_RCh4}, $\varepsilon_1 -i(\alpha) \leq  \cdots \leq \varepsilon_k - i(\alpha)$ and $\varepsilon_1 -c(\alpha) \leq  \cdots \leq \varepsilon_k - c(\alpha),$ respectively, are the right and left minimal indices of  $G(\lambda).$ Since the left and right minimal indices of $G(\lambda)$ are the same, we must have $i(\alpha)=c(\alpha)$. But $ i(\alpha)+c(\alpha)= m-1.$ Consequently, we have $i(\alpha) =(m-1)/2= c(\alpha) $ which shows that
$\varepsilon_1 - (m-1)/2 \leq \cdots \leq\varepsilon_k -(m-1)/2 $ are the minimal (left and right) indices of $ G(\lambda)$. Recall  that $\mathbb{L}(\lam)$ is not a linearization of $G(\lam)$ if $m$ is even.

Thus, if $\mathbb{L}(\lam)$ is a structure-preserving linearization of $G(\lam)$ considered in Section~\ref{structured_lin_GFPR_Glam} then the left minimal indices of $\mathbb{L}(\lambda)$ are the same as the right minimal indices of $\mathbb{L}(\lambda)$.  Moreover, if $\varepsilon_1\leq \cdots \leq\varepsilon_k$ are the minimal (left and right) indices of $ \mathbb{L}(\lambda)$ then $\varepsilon_1 - (m-1)/2 \leq \cdots \leq\varepsilon_k -(m-1)/2 $ are the minimal (left and right) indices of $ G(\lambda)$. Hence we only need to comment on the recovery of eigenvectors and minimal bases of $G(\lam)$ from those of the $\mathbb{L}(\lam)$.

Note that the left minimal bases of $G(\lam)$ are the same as the right minimal bases of $G(\lam)$  when $G(\lam)$ is symmetric (resp., Hamiltonian, skew-Hamiltonian, skew-symmetric). Hence
 if $\mathbb{L}(\lam)$ is a structure-preserving linearization of $G(\lam)$  considered in Section~\ref{structured_lin_GFPR_Glam} then the left minimal bases of $\mathbb{L}(\lam)$ are the same as the right minimal bases of $\mathbb{L}(\lam)$. Consequently, minimal bases and eigenvectors of $G(\lam)$ can be recovered from those of $\mathbb{L}(\lam)$ as special cases of Theorem~\ref{gfprrecoslame_RCh4} and Theorem~\ref{gfprrecoslamregular_RCh4}. Indeed, for structure-preserving linearizations, we have $c_0(\sig, \sigma_2) = 0$ when $h =0$ and, $c_0(\sig, \sigma_2) $ is given in the Table~\ref{tab1} when $h>0.$

\renewcommand{\arraystretch}{1.5}
	\begin {table}[H]
		\begin{center}
		\begin{tabular}{|c|c|c|c|}
			\hline
			Structure & symmetric  & $T$-even/odd & skew-symmetric \\
			\hline
			$c_0(\sig, \sig_2)$& $2+ i_2(\textbf{t}_{w_h})$   & 1       & $2+ c_2(\textbf{t}_{w})$  \\	\hline
		\end{tabular}   \caption{ } \label{tab1}
	\end{center}
	\end {table}
\vspace*{-\baselineskip}

{\bf Conclusion.}  We have made four major contributions in this paper. First, we have generalized GFPRs of  a matrix polynomial $P(\lam)$ to the case of  a rational matrix $G(\lam).$ Moreover, we have shown that the transition from GFPRs of matrix polynomials to GFPRs of rational matrices is operation-free (Theorem~\ref{gfprptoge_RCh4}). Second, and most importantly, we have utilized  GFPRs of $G(\lam)$  to construct structure-preserving Rosenbrock strong linearizations of a structured  (symmetric, Hermitian, skew-symmetric, even, odd, etc.) rational matrix $G(\lam).$
Third, we have shown that FPs, GFPs and GFPRs of $G(\lam)$  are Rosenbrock strong linearizations of  $G(\lam).$ Fourth, we have described automatic  recovery rules for eigenvectors, minimal bases and minimal indices  of $G(\lam)$ from those of the linearizations of $G(\lam).$

%

\appendix 

\section{ The proof of Lemma~\ref{lem_full_expre}}\label{appendix1}
\begin{proof} For simplicity, we write $\widehat{\Lambda}_{i,j}$, $\Lambda_{i,j},$ $ \widehat{\Omega}_{i,j}  $ and $\Omega_{i,j} $ for $\widehat{\Lambda}_{i,j} (\lam),$  $\Lambda_{i,j}(\lam)$, $\widehat{\Omega}_{i,j} (\lam) $ and $\Omega_{i,j} (\lam)$, respectively, where $\widehat{\Lambda}_{i,j} (\lam),$  $\Lambda_{i,j}(\lam)$, $\widehat{\Omega}_{i,j} (\lam) $ and $\Omega_{i,j} (\lam)$ are defined in (\ref{eqn-bc}) and (\ref{eqn-br}). For $ t \in \{ 1:m-1\}$, we have the following:
	\begin{equation}\label{Q_cole}
	Q_t^{\mathcal{B}} (e_i \otimes I_n) =
	\left\{
	\begin{array}{ll}
	(e_i \otimes I_n)  + (e_{i+1} \otimes \lam I_n)  & \mbox{if}~ t=i \text{ for } i=1:m-1, \\
	e_i \otimes I_n  & \text{if } t \neq i  \text{ for } i=1:m,
	\end{array}
	\right.
	\end{equation}
	\begin{equation}\label{R_cole}
	R_t^{\mathcal{B}} (e_i \otimes I_n) =
	\left\{
	\begin{array}{ll}
	e_{i+1} \otimes I_n & \text{if } t=i \text{ for } i=1:m-1,\\
	e_{i} \otimes I_n & \text{if } t \notin \{i , i-1\} \text{ for } i=1:m.   \end{array}
	\right.
	\end{equation}
	
	Let $1 \leq k \leq m-1$ and $  p \geq 0, ~q \geq 0$ be such that $ k +p+q-1 \leq m-1.$
	Consider
	\begin{align*}
	 Z(\lam) : =  \underbrace{ R^\mathcal{B}_{k+p+q-1}  \cdots R^\mathcal{B}_{k+p+1} R^\mathcal{B}_{k+p} }_{Y(\lam) } \, \underbrace{Q^\mathcal{B}_{k+p-1} \cdots Q^\mathcal{B}_{k+1} Q^\mathcal{B}_{k}}_{X(\lam)}.
	\end{align*}
 Then $X(\lam)$ (resp., $Y(\lam)$) is a product of $ p$ (resp., $q$) $ Q^\mathcal{B}$'s (resp., $ R^\mathcal{B}$'s). We show that
	\begin{equation} \label{eqnepe}
	Z(\lam)  (e_k \otimes I_n)=
	\left[
	\begin{matrix}
	0_{(k-1)n \times n}\\
	\widehat{\Lambda}_{p,q}\\
	\lam^{p} I_n\\
	0_{(m-k-p-q)n \times n}
	\end{matrix}\right].
	\end{equation}
By applying (\ref{Q_cole}) repeatedly, we have
	\begin{align*}
		&   X(\lam) \;  (e_k \otimes I_n)=  \Big(  Q^\mathcal{B}_{k+p-1} \cdots Q^\mathcal{B}_{k+1} \Big) \;     \Big( (e_{k+1} \otimes \lam I_n) + (e_{k} \otimes I_n)  \Big )\\
		& = \Big(  Q^\mathcal{B}_{k+p-1} \cdots Q^\mathcal{B}_{k+2}  \Big)    \;  \Big((e_{k+2} \otimes \lam^2 I_n) + (e_{k+1} \otimes \lam I_n) + (e_{k} \otimes I_n)  \Big )\\
		& =   Q^\mathcal{B}_{k+p-1}    \; \Big(  (e_{k+p-1} \otimes \lam^{p-1} I_n) + \sum\nolimits_{j=k}^{k+p-2} (e_{j} \otimes \lam^{j-k} I_n)  \Big)  \\
		& =    (e_{k+p} \otimes \lam^{p} I_n) +(e_{k+p-1} \otimes \lam^{p-1} I_n) + \sum\nolimits_{j=k}^{k+p-2} ( e_{j} \otimes \lam^{j-k} I_n)   \\
		& =    (e_{k+p} \otimes \lam^{p} I_n) + \sum\nolimits_{j=k}^{k+p-1} (e_{j} \otimes \lam^{j-k} I_n).
\end{align*}
Now, by applying (\ref{R_cole}) repeatedly, we have $Z(\lam)  (e_k \otimes I_n) =$
	\begin{align*}
		 & \Big ( R^\mathcal{B}_{k+p+q-1}   \cdots R^\mathcal{B}_{k+p+1}  R^\mathcal{B}_{k+p} \Big) \; \Big(  (e_{k+p} \otimes \lam^{p} I_n) + \sum\nolimits_{j=k}^{k+p-1} (e_{j} \otimes \lam^{j-k} I_n ) \Big)\\
		&=    (e_{k+p+q} \otimes \lam^{p} I_n) + \sum\nolimits_{j=k}^{k+p-1} (e_{j} \otimes \lam^{j-k} I_n)  \\
		&= \left[
		\begin{matrix}
			0_{(k-1)n \times n}\\
			\widehat{\Lambda}_{p,q}\\
			\lam^{p} I_n\\
			0_{(m-k-p-q)n \times n}
		\end{matrix}\right], \text{ which proves } (\ref{eqnepe}).
	\end{align*}

We now prove that $U(\lam) (e_1 \otimes I_n) = \Lambda_\alpha (\lam)$. Recall the definitions of $\Lambda_{\alpha} (\lam)$, $m_j$ and $s_j$ associated with $\RCISS(\alpha) = (c_1,i_1,c_2,i_2, \ldots, c_\ell, i_\ell)$. If $\ell =1$ then by (\ref{eqnepe}) we have $ U(\lam) (e_1 \otimes I_n)= U_{(c_1,i_1)}(e_1 \otimes I_n) = \Lambda_\alpha (\lam). $  Next, if $\ell > 1$ then by using (\ref{Q_cole}), (\ref{R_cole}) and (\ref{eqnepe}) repeatedly we have $U(\lam) (e_1 \otimes I_n) = \Lambda_\alpha (\lam)$. Indeed, we have the following. Recall that $ \widehat{\Lambda}_{c_j,i_j} \in \mathbb{C}[\lam]^{(c_j+i_j)n \times n}$.  We denote by $ \text{\Large 0}$ the zero matrix of an appropriate size.
Then  we have
\begin{align*}
 &U(\lam) (e_1 \otimes I_n) = U_{(c_\ell,i_\ell)}  \cdots U_{(c_2,i_2)} U_{(c_1,i_1)} (e_1 \otimes I_n)\\
  & = U_{(c_\ell,i_\ell)}  \cdots U_{(c_2,i_2)}  \left[
 \begin{matrix}
 \widehat{\Lambda}_{c_1,i_1}\\
 \lam^{c_1} I_n\\
 \text{\Large 0}\\
 \end{matrix}\right] ~[\text{by } (\ref{eqnepe}) \text{ since } s_0 = 0 ]\\
 & = U_{(c_\ell,i_\ell)}  \cdots U_{(c_2,i_2)} \Big ( (e_{s_1+1} \otimes \lam^{c_1} I_n)  + \left[
 \begin{matrix}
 \widehat{\Lambda}_{c_1,i_1}\\
 0\\
 \text{\Large 0}\\
 \end{matrix}\right] \Big ) ~[\text{since } s_1 = c_1 +i_1] \\
 & = U_{(c_\ell,i_\ell)}  \cdots U_{(c_2,i_2)} (e_{s_1+1} \otimes \lam^{c_1} I_n)  + \left[
 \begin{matrix}
 \widehat{\Lambda}_{c_1,i_1}\\
 0\\
 \text{\Large 0}\\
 \end{matrix}\right] ~[\text{by } (\ref{uBij}), (\ref{Q_cole}) \text{ and }  (\ref{R_cole})]\\
   & = U_{(c_\ell,i_\ell)}  \cdots U_{(c_3,i_3)}
 \left[
  \begin{matrix}
  0_{s_1n \times n}\\
 \lam^{c_1} \widehat{\Lambda}_{c_2,i_2}\\
  \lam^{c_1} \lam^{c_2} I_n\\
   \text{\Large 0}\\
  \end{matrix}\right] +
  \left[
 \begin{matrix}
 \widehat{\Lambda}_{c_1,i_1}\\
 0\\
  \text{\Large 0}\\
 \end{matrix}\right] ~[\text{by } (\ref{eqnepe})]\\
& = U_{(c_\ell,i_\ell)}  \cdots U_{(c_3,i_3)}  (e_{s_2+1} \otimes \lam^{c_1+c_2} I_n)+
 \left[
 \begin{matrix}
 0_{s_1n \times n}\\
 \lam^{c_1} \widehat{\Lambda}_{c_2,i_2}\\
 0\\
 \text{\Large 0}\\
 \end{matrix}\right] +
 \left[
 \begin{matrix}
 \widehat{\Lambda}_{c_1,i_1}\\
 0\\
 \text{\Large 0}\\
 \end{matrix}\right] ~\begin{array}{l} [\text{by }  (\ref{uBij}), (\ref{Q_cole}) \\ \text{ and }  (\ref{R_cole}) ]\end{array}  \\
  & = U_{(c_\ell,i_\ell)}  \cdots U_{(c_3,i_3)}  (e_{s_2+1} \otimes \lam^{m_2} I_n)+
   \left[
 \begin{matrix}
 \widehat{\Lambda}_{c_1,i_1}\\
 \lam^{m_1} \widehat{\Lambda}_{c_2,i_2}\\
 0\\
 \text{\Large 0}\\
 \end{matrix}\right] ~[\text{since } m_1 =c_1 \text{ and } m_2 = c_1 + c_2]\\
  & = U_{(c_\ell,i_\ell)}   (e_{s_{\ell-1} +1} \otimes \lam^{m_{\ell -1}} I_n)  +
\left[
 \begin{matrix}
 \widehat{\Lambda}_{c_1,i_1} \\
 \lambda^{m_1}\widehat{\Lambda}_{c_2,i_2} \\
 \vdots\\
 \lambda^{m_{\ell-2}}\widehat{\Lambda}_{c_{\ell - 1},i_{\ell-1}} \\[1em]
 \text{\Large 0}\\
 \end{matrix}
 \right] ~\begin{array}{l}
[\text{by repeated application of } \\
~(\ref{Q_cole}) ,(\ref{R_cole}) \text{ and }   (\ref{eqnepe})]
 \end{array} \\
  & = \left[
 \begin{matrix}
 0_{s_{\ell -1} n \times n}\\
 \lam^{m_{\ell-1}} \widehat{\Lambda}_{c_\ell,i_\ell}\\
 \lam^{m_{\ell -1}} \lam^{c_\ell} I_n\\
  \end{matrix}\right] + \left[
  \begin{matrix}
  \widehat{\Lambda}_{c_1,i_1} \\
  \lambda^{m_1}\widehat{\Lambda}_{c_2,i_2} \\
  \vdots\\
  \lambda^{m_{\ell-2}}\widehat{\Lambda}_{c_{\ell - 1},i_{\ell-1}} \\[1em]
  \text{\Large 0}\\
  \end{matrix}
  \right]  =
  \left[
 \begin{matrix}
 \widehat{\Lambda}_{c_1,i_1} \\
 \lambda^{m_1}\widehat{\Lambda}_{c_2,i_2} \\
 \vdots\\
 \lambda^{m_{\ell-2}}\widehat{\Lambda}_{c_{\ell - 1},i_{\ell-1}} \\
 \lambda^{m_{\ell-1}} \Lambda_{c_\ell, i_\ell}
 \end{matrix}
 \right] = \Lambda_\alpha (\lam) ~[\text{by } (\ref{eqnepe})].
 \end{align*}
This proves that $U(\lam) (e_1 \otimes I_n) = \Lambda_\alpha (\lam)$.

Next we prove that $ (e^T_1 \otimes I_n) V(\lam) = \Omega_\alpha (\lam)$. For $ t \in \{ 1:m-1\}$, we have
	\begin{equation}\label{Q_rowe}
	(e^T_i \otimes I_n) Q_t =
	\left\{
	\begin{array}{ll}
	(e^T_i \otimes I_n)  + (e^T_{i+1} \otimes \lam I_n)  & \mbox{if } t=i \text{ for } i=1:m-1, \\
	e^T_i \otimes I_n  & \text{if } t \neq i  \text{ for } i=1:m,
	\end{array}
	\right.
	\end{equation}
\begin{equation}\label{R_rowe}
	(e^T_i \otimes I_n) R_t =
	\left\{
	\begin{array}{ll}
	e^T_{i+1} \otimes I_n & \text{if } t=i \text{ for } i=1:m-1,\\
	e^T_{i} \otimes I_n & \text{if } t \notin \{i , i-1\} \text{ for } i=1:m.   \end{array}
	\right.
	\end{equation}

Let $1 \leq k \leq m-1$. Let $  p \geq 0 $ and $q \geq 0$ be such that $ k +p+q -1\leq m-1.$
Consider $W(\lam) : =  R_{k} R_{k+1} \cdots R_{k+p-1}  Q_{k+p} Q_{k+p+1} \cdots Q_{k+p+q-1}.$ Then (\ref{Q_rowe}) and (\ref{R_rowe}) and similar arguments as those in the proof of (\ref{eqnepe}) give
\begin{equation} \label{eqnepee}
(e^T_k \otimes I_n) W(\lam)=
\left[
\begin{matrix}
0_{(k-1)n \times n}\\
\widehat{\Omega}_{p,q}\\
\lam^q I_n\\
0_{(m-k-p-q)n \times n}
\end{matrix}
\right]^{\mathcal{B}} 		
\end{equation}
Hence by (\ref{eqnepee}), (\ref{Q_rowe}), (\ref{R_rowe}) and by similar arguments as those in the proof of $U(\lam) (e_1 \otimes I_n) = \Lambda_\alpha (\lam)$, we have $ (e^T_1 \otimes I_n) V(\lam) = \Omega_\alpha (\lam)$.
\end{proof}

\section{The proof of Proposition~\ref{prop_elimination}} \label{appendix2}

\begin{proof} Define $Z : = \left[
	\begin{array}{c|c}
	I_{(m-1)n} & 0 \\
	\hline
	0 &0\\
	\end{array}
	\right]
	+ X (\lam)  Y (\lam). $
	Let $Z_{i,j}$ be the $(i,j)$-th block entry of $Z$. Since  $\mathbf{x}_i \mathbf{y}_i = 0$, we have $Z_{i,i} = I_n$ for $i = 1:m-1$ and  $Z_{m,m} =\mathbf{x}_m \mathbf{y}_m. $
	Further, note that we have either
	$Z_{i,j} = \lam^{p_i+q_j} I_n$ or $Z_{i,j} =0$ for all  $i \neq j.$  Hence we have
	$Z = \left[ \begin{array}{ccccc}
	I_n & Z_{1,2} & \cdots &Z_{1, m-1} &Z_{1,m} \\
	Z_{2,1}  & I_n  & \cdots & Z_{2,m-1} &Z_{2,m} \\
	\vdots  & \vdots & \ddots & \vdots & \vdots \\
	Z_{m-1,1}  &Z_{m-1,2} & \cdots &  I_n & Z_{m-1,m} \\
	Z_{m,1}  & Z_{m,2} & \cdots & Z_{m, m-1}& Z_{m,m} \\
	\end{array} \right].$ Now define $L(\lam)$ and $U(\lam)$ by
	$L(\lam) := \left[ \begin{array}{@{}cccc@{}}
	I_n &  & &  \\
	-Z_{2,1}  & I_n  & &  \\
	\vdots  & \cdots & \ddots &  \\
	-Z_{m,1}  & -Z_{m,2} & \cdots & I_n \\
	\end{array} \right]$ and  $U (\lam) := \left[ \begin{array}{@{}cccc@{}}
	I_n & -Z_{1,2} & \cdots & -Z_{1,m} \\
	& I_n  & & -Z_{2,m} \\
	&  & \ddots & \vdots \\
	&  &  & I_n \\
	\end{array} \right].$
	
	Note that $\mathbf{x}_i \mathbf{y}_i = 0 \Rightarrow \mathbf{y}_i \mathbf{x}_i =0$ for $i = 1:m-1$. Hence it follows
	that $Z_{i,j} Z_{j,k} = \mathbf{x}_i \mathbf{y}_j \mathbf{x}_j \mathbf{y}_k = 0 $ for $ i,k \in \{1:m\}$ and $j \in \{1:m-1\}$. Consequently, by block Gaussian elimination, we have $L(\lam) Z U(\lam) = \diag (I_{(m-1)n}, ~\mathbf{x}_m \mathbf{y}_m ).$ \end{proof}

\end{document}